\numberwithin{equation}{section}
\numberwithin{theorem}{section}
\numberwithin{lemma}{section}
\numberwithin{remark}{section}
\begin{document}

\title{Correction of high-order  $L_k$ approximation for subdiffusion}

\author{Jiankang Shi    \and
         Minghua Chen    \and
         Yubin Yan        \and
          Jianxiong Cao
}

\institute{J. Shi \and M. Chen (\Envelope)   \at
              School of Mathematics and Statistics, Gansu Key Laboratory of Applied Mathematics and Complex Systems, Lanzhou University, Lanzhou 730000, P.R. China\\
email:chenmh@lzu.edu.cn;  shijk17@lzu.edu.cn\\
   \and
         Y. Yan \at
              epartment of Mathematical and Physical Sciences
Fac. of Science and Engineering, University of Chester
Thornton Science Park, Pool Lane, Ince, CH2 4NU, UK\\
              email: y.yan@chester.ac.uk
           \and
         J. Cao \at
              School of Sciences, Lanzhou University of Technology, Lanzhou 730000, P.R. China\\
              email: caojianxiong2007@126.com
}

\date{Received: date / Accepted: date}

\maketitle

\begin{abstract}

The subdiffusion equations with a Caputo fractional  derivative of order $\alpha \in (0,1)$   arise in a wide variety of  practical problems,
which is describing the transport processes, in the force-free limit, slower than Brownian diffusion.
In this work, we derive the correction schemes of the Lagrange interpolation  with degree $k$ ($k\leq 6$)  convolution quadrature, called  $L_k$ approximation,   for the subdiffusion,
which are easy to implement on variable grids.
The key step of designing  correction algorithm  is to calculate the explicit form of the coefficients of  $L_k$ approximation by the polylogarithm function  or  Bose-Einstein integral.
To construct a  $\tau_8$ approximation of  Bose-Einstein integral,  the desired  $(k+1-\alpha)$th-order convergence rate can be proved
for the correction $L_k$ scheme with nonsmooth data, which is  higher  than $k$th-order BDF$k$ method  in [Jin, Li, and Zhou,
SIAM J. Sci. Comput., 39 (2017), A3129--A3152; Shi  and Chen, J. Sci. Comput., (2020) 85:28].   The numerical experiments with spectral method are given to illustrate theoretical results.

\keywords{Subdiffusion, $L_k$ approximation, Bose-Einstein integral, convergence analysis, nonsmooth data.}
\end{abstract}

\section{Introduction}\label{Se:intro}
The subdiffusion equations  are a type of partial differential equations describing the transport processes,
which are, in the force-free limit, slower than Brownian diffusion.  Many application problems can be modeled by subdiffusion, such as
 underground environmental problems, transport in turbulent plasma, bacterial motion
transport in micelle systems and in heterogeneous rocks, porous systems, dynamics of a bead in
a polymeric network \cite{Metzler:00}. In this work, we study  the high order time discretization schemes by the Lagrange interpolation of degree $k\leq 6$,
called  $L_k$ approximation,   for  solving  the  subdiffusion, whose prototype is \cite{Podlubny:1999},  for  $0<\alpha<1$
\begin{equation} \label{fee}
 \left \{
\begin{split}
& ^C_0{D}^{\alpha}_{t} u(t) - A u(t)= f(t), \quad 0<t<T, \\
& u(0) = v.
\end{split}
\right.
\end{equation}
Here  $f$ is a given function, the operator $A=\Delta$  denotes Laplacian on a polyhedral domain $\Omega \subset \mathbb{R}^d$, $d=1,2,3$. The operator $^C_0{D}^{\alpha}_{t}$ denotes the Caputo fractional derivative, namely,
\begin{equation}\label{Cfd}
  ^C_0{D}^{\alpha}_{t}u(t) = \frac{1}{\Gamma(1-\alpha)} \int^{t}_{0} {(t-s)^{-\alpha}u'(s)} ds.
\end{equation}

Because of the nonlocal properties of Caputo fractional  derivative \eqref{Cfd}, the correction of higher order $L_k$ approximation   play
a more important role in discretizing Caputo fractional derivatives than classical ones  \cite{CD:2014}. The striking feature
is that higher order $L_k$ approximation of nonlocal  operators can keep the same computation cost with
$L_1$ schemes  but greatly improve the accuracy. In recent years, there are some important progress has been made for numerically solving the subdiffusion.
For example, under the smooth assumption,
Lin and Xu  developed the $L_1$ schemes for the subdiffusion and the optimal convergence rate with $\mathcal{O}(\tau^{2-\alpha})$ has been obtained  in \cite{LX:2007}.
Gao et al. \cite{GSZ:2014} showed that the desired $\mathcal{O}(\tau^{3-\alpha})$ convergence rate can be achieved for $L_2$ approximation by some numerical simulation.
Later on,  Lv and Xu    establish stability and convergence analysis of $L_2$ approximation in \cite{LX:2016}.
Cao and Li at al. provided a high-order $\mathcal{O}(\tau^{k+1-\alpha})$, $k\leq 5$, for subdiffusion  with  $L_k$ approximation \cite{CLC:2015,LCL:2016}, where the convergence
analysis also  remains to be proved.

It is well known that   the smoothness of all the data of \eqref{fee} do not imply the smoothness of the solution $u$.
For example,  the following estimate holds if  $f=0$  \cite{SY:11,SC:2020}, namely,
$$\|  ^C_0{D}^{\alpha}_{t}u(t)\|_{L^2(\Omega)}\leq ct^{-\alpha}\| u_0\|_{L^2(\Omega)},$$
which reduces to a  parabolic problem $\|\partial_t u(t)\|_{L^2(\Omega)}\leq ct^{-1}\| u_0\|_{L^2(\Omega)}$ if $\alpha=1$ \cite[p.\,39]{Thomee:2006}.
It implies  that $u$ has an initial layer  at $t\rightarrow 0^{+}$ \cite{SOG:2017}.
In another word, the high-order convergence rates may not hold for nonsmooth data.
Hence, the efficiently solving the subdiffusion naturally becomes an urgent topic.
Luckily, there are already two predominant  discretization techniques in time direction to restore the desired convergence rate for nonsmooth data.
The first type is that the nonuniform time meshes/graded meshes are employed to compensate for the singularity of
the continuous solution near $t=0$. For example, Stynes et al.  capture the singularity of the solution for  subdiffusion \eqref{fee} and the optimal convergence rate
with $\mathcal{O}(\tau^{2-\alpha})$ of the time discretization schemes can be restored \cite{SOG:2017}.
The corresponding theoretical and algorithm can also be extended to  the Caputo fractional substantial derivative equation \cite{CJB:2021}.
Using a nonstandard set of basis functions,  Kopteva \cite{Kopteva:2021} provide an $L_2$ approximation for subdiffusion, which proved and restored  the optimal order $3-\alpha$ on graded meshes.

The second  type is that,  based on correction of high-order  BDF$k$  or   $L_k$ approximation, the desired  high-order convergence rates can be restored even for nonsmooth initial data.
For example, Lubich et al.  provided the corrected BDF$2$ and proved the optimal convergence orders for an evolution equaiton with a weakly singular kernels \cite{LST:1996}.
Jin et al. \cite{JLZ:2017} developed correction BDF$k$ ($k\leq 6$) formulas   to restore the desired $k$-order convergence rate  for subdiffusion \eqref{fee},
which also hold   for the fractional Feynman-Kac equation with L\'{e}vy flight \cite{SC:2020}.
For $L_k$ approximation, Jin et al. \cite{JLZ:2016} revisit the error analysis of $L_1$ scheme, and establish an $\mathcal{O}(\tau)$ convergence rate for both smooth and nonsmooth initial data.
Yan et al.  introduced a modified $L_1$ scheme for solving \eqref{fee} and obtain optimal convergence rate with $\mathcal{O}(\tau^{2-\alpha})$ for smooth and \cite{YKF:2018}.
Wang and Yan et al. proved that   correction schemes of $L_2$  and $L_3$ approximations, respectively,   have the optimal convergence orders $\mathcal{O}(\tau^{3-\alpha})$ and $\mathcal{O}(\tau^{4-\alpha})$ for both smooth and nonsmooth data \cite{WYY:2020}. It seems that there are no published works of high-order  $L_k$ ($k\geq 4$) approximation with  nonsmooth initial data   for subdiffusion  \eqref{fee}.
In fact, it is not an easy task for convergence analysis  based on the idea of \cite{JLZ:2016,WYY:2020,YKF:2018}, this is due to the complexity of the coefficients in the $L_k$ approximation with Bose-Einstein integral. In this work, the key step of designing  correction $L_k$ schemes   is to calculate the explicit form of the coefficients of  $L_k$ approximation with the polylogarithm function or Bose-Einstein integral \cite{BBR:2003}.
Moreover, we need to construct the high-order  $\tau_8$-approximation of order $8$ for  Bose-Einstein integral \cite{BBR:2003}.
Then the desired  $(k+1-\alpha)$th-order convergence rate can be proved
for the correction $L_k$ scheme with nonsmooth data.
The main advantage of $L_k$ approximation (compared with BDF$k$)  is that its convergence rate  higher  than  BDF$k$ method and more easily implemented on variable  grids/graded meshes.

The paper is organized as follows. In Section \ref{Se:corre}, we provide correction of $L_k$ approximation at the starting $k$ steps for fractional order evolution equation \eqref{fee}.
In Section \ref{Se:conver}, we provide the detailed convergence analysis of correction $L_{k}$ schemes.
Some numerical examples are given to show the effectiveness of the presented schemes in Section \ref{Se:numer}.

\section{Correction of  high-order $L_k$ approximation }\label{Se:corre}
Let $V(t)=u(t)-u(0)=u(t)-v$, we can rewrite \eqref{fee} as \cite{Podlubny:1999}
\begin{equation}\label{rfee}
\left \{
\begin{split}
     & ^C_0{D}^{\alpha}_{t} V(t) - A V(t)= Av + f(t), \quad 0<t<T, \\
     & V(0) = 0,
\end{split}
\right.
\end{equation}
where we use   $^C_0{D}^{\alpha}_{t} u(t)=\partial^{\alpha}_{t}(u(t)-u(0))=\partial^{\alpha}_{t}V(t)= \,^C_0{D}^{\alpha}_{t} V(t)$ with $V(0) = 0$, and $\partial^{\alpha}_{t}$ denotes the left-sided Riemann-Liouville fractional derivative of order $\alpha \in (0,1)$
\begin{equation*}\label{RLfd}
  \partial^{\alpha}_{t}u(t) = \frac{1}{\Gamma(1-\alpha)}\frac{\partial}{\partial t} \int^{t}_{0} {(t-s)^{-\alpha}u(s)} ds.
\end{equation*}

\subsection{Derivation of the high-order $L_k$ approximation}
Let $t_{n} = n\tau, n = 0, 1, \ldots, N,$ be a uniform partition of the time interval $[0, T]$ with the step size $\tau = \frac{T}{N}$, and let $V^{n}$ denote the approximation of $V(t_{n})$ and $f^{n} = f(t_{n})$.
Using $k+1$ points $(t_{j-k}$, $V^{j-k})$, $\dots$,   $(t_{j-1}$, $V^{j-1})$,   $(t_{j}, V^{j})$ for $j\geq k$,
we can construct the Lagrange interpolation function $L_{k,j}(t)$ of degree $k$ with $k\leq6$, namely,
\begin{equation}\label{ads2.11}
L_{k,j}[V(t)]=\sum^{k}_{l=0} V^{j-l} \prod^{k}_{i=0,i\neq l} \frac{t-t_{j-i}}{t_{j-l}-t_{j-i}},~~t\in (t_{j-1},t_{j}),
\end{equation}
and its derivative is
\begin{equation}\label{ads2.12}
\begin{split}
L'_{1,j}[V(t)]&=\sum^{k}_{l=0} V^{j-l}   \sum^{k}_{m=0,m\neq l} \frac{1}{t_{j-l}-t_{j-m}},~~k=1;\\
L'_{k,j}[V(t)]&=\sum^{k}_{l=0} V^{j-l}   \sum^{k}_{m=0,m\neq l} \left(\frac{1}{t_{j-l}-t_{j-m}}  \prod^{k}_{i=0,i\neq l,i\neq m} \frac{t-t_{j-i}}{t_{j-l}-t_{j-i}}\right),~~2\leq k \leq6.
\end{split}
\end{equation}

Moreover, we take  $V^{-1}=V^{-2}=\dots=V^{-k}=0$ and $t_{-1}=-t_{1}, t_{-2}=-t_{2}, \dots, t_{-k}=-t_{k}$ such that all quantities appearing in \eqref{ads2.11} and \eqref{ads2.12} are defined for $1\leq j \leq k-1$.
Then the $L_k$ approximation of the Caputo fractional derivative is
\begin{equation}\label{add2.5}
\begin{split}
\delta^{\alpha}_{\tau}V^n
& =\frac{1}{\Gamma(1-\alpha)}\sum^{n}_{j=1} \int_{t_{j-1}}^{t_{j}} (t_{n}-s)^{-\alpha} L'_{k,j}[V(s)] ds  =\sum^{n}_{j=1} \sum^{k}_{l=0} V^{j-l} J(k,j,l)\\
& =\sum^{k}_{l=0}\sum^{n-l}_{j=1}  V^{j} J(k,j+l,l)  = \sum^{n}_{j=1}  \sum^{\min\{n,j+k\}}_{l=j}J(k,l,l-j)V^{j}
=:\tau^{-\alpha} \sum^{n}_{j=1} \omega^{(k)}_{n-j} V^{j}
\end{split}
\end{equation}
with
\begin{equation*}
\begin{split}
J&(k,j,l)=\frac{1}{\Gamma(1-\alpha)}\int_{t_{j-1}}^{t_{j}} (t_{n}-s)^{-\alpha}\sum^{k}_{m=0,m\neq l} \frac{1}{t_{j-l}-t_{j-m}}\prod^{k}_{i=0,i\neq l,i\neq m} \frac{s-t_{j-i}}{t_{j-l}-t_{j-i}} ds\\
=&\frac{1}{\tau^{\alpha}}\frac{1}{\Gamma(1-\alpha)}\int_{0}^{1} (n-j+1-s)^{-\alpha}\sum^{k}_{m=0,m\neq l} \frac{1}{m-l}\prod^{k}_{i=0,i\neq l,i\neq m} \frac{s+i-1}{i-l} ds,~0\leq l\leq k\leq 6.
\end{split}
\end{equation*}
Here the coefficients  $\omega^{(k)}_{n-j}$  are defined by
\begin{equation*}
\begin{split}
\omega^{(1)}_{j}=& \frac{\rho_{1}(j,l,1)}{\Gamma(2-\alpha)}, \qquad \omega^{(2)}_{j}= \frac{\rho_{2}(j,l,2)}{\Gamma(3-\alpha)} + \frac{1}{2}\frac{\rho_{2}(j,l,1)}{\Gamma(2-\alpha)}, \\
\omega^{(3)}_{j}=& \frac{\rho_{3}(j,l,3)}{\Gamma(4-\alpha)} + \frac{\rho_{3}(j,l,2)}{\Gamma(3-\alpha)} + \frac{1}{3}\frac{\rho_{3}(j,l,1)}{\Gamma(2-\alpha)}, \\
\omega^{(4)}_{j}=& \frac{\rho_{4}(j,l,4)}{\Gamma(5-\alpha)} + \frac{3}{2}\frac{\rho_{4}(j,l,3)}{\Gamma(4-\alpha)}+\frac{11}{12}\frac{\rho_{4}(j,l,2)}{\Gamma(3-\alpha)}
                     + \frac{1}{4}\frac{\rho_{4}(j,l,1)}{\Gamma(2-\alpha)}, \\
\omega^{(5)}_{j}=& \frac{\rho_{5}(j,l,5)}{\Gamma(6-\alpha)} + 2\frac{\rho_{5}(j,l,4)}{\Gamma(5-\alpha)} + \frac{7}{4}\frac{\rho_{5}(j,l,3)}{\Gamma(4-\alpha)} + \frac{5}{6}\frac{\rho_{5}(j,l,2)}{\Gamma(3-\alpha)}+ \frac{1}{5}\frac{\rho_{5}(j,l,1)}{\Gamma(2-\alpha)}, \\
\omega^{(6)}_{j}=
& \frac{\rho_{6}(j,l,6)}{\Gamma(7-\alpha)} + \frac{5}{2}\frac{\rho_{6}(j,l,5)}{\Gamma(6-\alpha)} + \frac{17}{6}\frac{\rho_{6}(j,l,4)}{\Gamma(5-\alpha)} + \frac{15}{8}\frac{\rho_{6}(j,l,3)}{\Gamma(4-\alpha)}\\
& + \frac{137}{180}\frac{\rho_{6}(j,l,2)}{\Gamma(3-\alpha)}+ \frac{1}{6}\frac{\rho_{6}(j,l,1)}{\Gamma(2-\alpha)}, ~~ j\geq0
\end{split}
\end{equation*}
with
\begin{equation*}
\rho_{k}(j,l,m) = \sum^{l}_{i=0}  \binom{k+1}{i} (-1)^{i} (j+1-i)^{m-\alpha},~~l=\min\{j+1, k+1\}, ~~k\leq 6.
\end{equation*}
\begin{remark}\label{adr2.1}
It should be noted that  the coefficients $\omega^{(k)}_{j}$ of $L_k$ approximation  in \eqref{add2.5} can be rewritten   the  explicit form with linearly computational count, see  Appendix.
In particularly,  the coefficients $\omega^{(k)}_{j}$ of $L_k$ approximation  reduce to the classical BDF$k$ if $\alpha=1$.
Moreover, the critical angles of $A(\vartheta_k)$-stable are increases when $\alpha$ decreases from $1$ to $0$, see  Table \ref{CALK} and Figures  \ref{fig_subfigures1}-\ref{fig_subfigures}.

\begin{table}[!ht]
\centering
\begin{tabular}{c c c c c c c c c}\hline
    \diagbox{k}{j} & 0& 1 & 2 & 3 & 4 & 5& 6& $\vartheta_k$ \\ \hline
    1 & 1 & -1 &   &   &   &   & &$90^\circ$ \\ \\
    2 & $\frac{3}{2}$ & -2 & $\frac{1}{2}$ &  &  &  & &$90^\circ$ \\ \\
    3 & $\frac{11}{6}$ & -3 & $\frac{3}{2}$ & $-\frac{1}{3}$ &  &  & &$86.03^\circ$ \\ \\
    4 & $\frac{25}{12}$ & -4 & 3 & $-\frac{4}{3}$ & $\frac{1}{4}$ &  & &$73.35^\circ$ \\ \\
    5 & $\frac{137}{60}$ & -5 & 5 & $-\frac{10}{3}$ & $\frac{5}{4}$ & $-\frac{1}{5}$ & &$51.84^\circ$ \\ \\
    6 & $\frac{49}{20}$ & -6 & $\frac{15}{2}$ & $-\frac{20}{3}$ & $\frac{15}{4}$ & $-\frac{6}{5}$ & $\frac{1}{6}$ &$17.84^\circ$ \\
\hline
\end{tabular}
\caption{The coefficients $\omega^{(k)}_{j}$ and critical angles of $A(\vartheta_k)$-stable for  BDF$k$,  see \cite{CD:2015,LeVeque:2007,Lubich:1986}}\label{CALK}
\end{table}
\end{remark}

Then the standard $L_k$ schemes for subdiffusion \eqref{rfee} is as following
\begin{equation}\label{SLks2}
\tau^{-\alpha} \sum^{n}_{j=1} \omega^{(k)}_{n-j} V^{j} - AV^{n} = Av + f(t_{n}), \quad n\geq 1 \quad {\rm with} ~  V^{0}=0,
\end{equation}
where the coefficients  $\omega^{(k)}_{j}$  are given in \eqref{add2.5} or Appendix.

The low regularity of the solution of \eqref{rfee} implies the above  standard $L_k$ approximation \eqref{SLks2}  should  yield a first-order accuracy.
To restore the $(k+1-\alpha)$ order accuracy with nonsmooth data, we correct the standard $L_k$ schemes \eqref{SLks2} at the starting $k$ steps by
\begin{equation}\label{CLks}
\begin{split}
\tau^{-\alpha} \sum^{n}_{j=1} \omega^{(k)}_{n-j} V^{j} - AV^{n} =
& \left(1+ a^{(k)}_{n}\right)(Av+f(0)) \\
&+ \sum_{l=1}^{k-1}\left(\frac{t_{n}^{l}}{l!}+ d^{(k)}_{l,n} \tau^{l} \right)\partial^{l}_{t}f(0)+R_{k}(t_{n}), ~~ 1\leq n \leq k, \\
\tau^{-\alpha} \sum^{n}_{j=1} \omega^{(k)}_{n-j} V^{j} - AV^{n} =& Av + f(0)+\sum_{l=1}^{k-1}\frac{t_{n}^{l}}{l!}\partial^{l}_{t} f(0) +R_{k}(t_{n}),  ~~ k+1 \leq n \leq N.
\end{split}
\end{equation}
Here $R_{k}(t_{n})$ is the corresponding local truncation term, namely,
\begin{equation}\label{srhsR}
R_{k}(t_{n})=f(t_{n}) - f(0) - \sum_{l=1}^{k-1}\frac{t_{n}^{l}}{l!}\partial^{l}_{t} f(0)= \frac{t_{n}^{k}}{k!}\partial^{k}_{t} f(0)
+ \left(\frac{t^{k}}{k!} \ast \partial^{k+1}_{t} f(t)\right)(t_{n}),
\end{equation}
and the symbol $\ast$ denotes Laplace convolution.
The correction coefficients $a^{(k)}_{n}$ and $d^{(k)}_{l,n}$ are given in Table \ref{Ta:corr1} and \ref{Ta:corr2}, respectively.

\begin{table}[!ht]
\begin{center}
\caption{The correction coefficients $a^{(k)}_{n}$.}
\begin{tabular}{c c c c c c c}
\hline
$L_k$  &  $a^{(k)}_{1}$           &  $a^{(k)}_{2}$            &  $a^{(k)}_{3}$           &  $a^{(k)}_{4}$            &  $a^{(k)}_{5}$          &  $a^{(k)}_{6}$            \\ \hline

$k=1$  &  $\frac{1}{2}$           &                           &                          &                           &                         &                           \\

$k=2$  &  $\frac{11}{12}$         &  $-\frac{5}{12}$          &                          &                           &                         &                           \\

$k=3$  &  $\frac{31}{24}$         &  $-\frac{7}{6}$           &  $\frac{3}{8}$           &                           &                         &                           \\

$k=4$  &  $\frac{1181}{720}$      &  $-\frac{177}{80}$        &  $\frac{341}{240}$       &  $-\frac{251}{720}$       &                         &                           \\

$k=5$  &  $\frac{2837}{1440}$     &  $-\frac{2543}{720}$      &  $\frac{17}{5}$          &  $-\frac{1201}{720}$      &  $\frac{95}{288}$       &                           \\

$k=6$  &  $\frac{138241}{60480}$  &  $-\frac{309047}{60480}$  &  $\frac{198251}{30240}$  &  $-\frac{145877}{30240}$  &  $\frac{23077}{12096}$  &  $-\frac{19087}{60480}$   \\ \hline
\end{tabular}
\label{Ta:corr1}
\end{center}
\end{table}

\begin{table}[!ht]
\begin{center}
\caption{The correction coefficients $d^{(k)}_{l,n}$.}
\begin{tabular}{c c c c c c c c}\hline
$L_k$   &         &  $d^{(k)}_{l,1}$        &  $d^{(k)}_{l,2}$         &  $d^{(k)}_{l,3}$       &  $d^{(k)}_{l,4}$        &  $d^{(k)}_{l,5}$      &  $d^{(k)}_{l,6}$    \\ \hline

$k=2$   &  $l=1$  &  $\frac{1}{12}$         &  $0$                     &                        &                         &                       &                     \\\hline

$k=3$   &  $l=1$  &  $\frac{1}{6}$          &  $-\frac{1}{12}$         &  $0$                   &                         &                       &                     \\
        &  $l=2$  &  $0$                    &  $0$                     &  $0$                   &                         &                       &                     \\\hline

$k=4$   &  $l=1$  &  $\frac{59}{240}$       &  $-\frac{29}{120}$       &  $\frac{19}{240}$      &  $0$                    &                       &                     \\
        &  $l=2$  &  $\frac{1}{240}$        &  $-\frac{1}{240}$        &  $0$                   &  $0$                    &                       &                     \\
        &  $l=3$  &  $-\frac{1}{720}$       &  $0$                     &  $0$                   &  $0$                    &                       &                     \\\hline

$k=5$   &  $l=1$  &  $\frac{77}{240}$       &  $-\frac{7}{15}$         &  $\frac{73}{240}$      &  $-\frac{3}{40}$        &  $0$                  &                     \\
        &  $l=2$  &  $\frac{1}{96}$         &  $-\frac{1}{60}$         &  $\frac{1}{160}$       &  $0$                    &  $0$                  &                     \\
        &  $l=3$  &  $-\frac{1}{360}$       &  $\frac{1}{720}$         &  $0$                   &  $0$                    &  $0$                  &                     \\
        &  $l=4$  &  $0$                    &  $0$                     &  $0$                   &  $0$                    &  $0$                  &                     \\\hline

$k=6$   &  $l=1$  &  $\frac{23719}{60480}$  &  $-\frac{11371}{15120}$  &  $\frac{7381}{10080}$  &  $-\frac{5449}{15120}$  &  $\frac{863}{12096}$  &  $0$                \\
        &  $l=2$  &  $\frac{1}{60}$         &  $-\frac{17}{480}$       &  $\frac{1}{40}$        &  $-\frac{1}{160}$       &  $0$                  &  $0$                \\
        &  $l=3$  &  $-\frac{58}{15120}$    &  $\frac{53}{15120}$      &  $-\frac{1}{945}$      &  $0$                    &  $0$                  &  $0$                \\
        &  $l=4$  &  $-\frac{1}{6048}$      &  $\frac{1}{6048}$        &  $0$                   &  $0$                    &  $0$                  &  $0$                \\
        &  $l=5$  &  $\frac{1}{30240}$      &  $0$                     &  $0$                   &  $0$                    &  $0$                  &  $0$                \\ \hline
\end{tabular}
\label{Ta:corr2}
\end{center}
\end{table}

\subsection{Solution representation for \eqref{fee}}

According to \eqref{srhsR}, we can rewrite \eqref{rfee} as
\begin{equation}\label{rfee1}
^C_0{D}^{\alpha}_{t} V(t) - A V(t)= Av + f(0)+\sum_{l=1}^{k-1}\frac{t^{l}}{l!}\partial^{l}_{t} f(0) +R_{k}(t)~~{\rm with} \quad V(0)=0.
\end{equation}
Taking the Laplace transform in both sides of \eqref{rfee1}, it leads to
\begin{equation*}
\widehat{V}(z)= (z^{\alpha}-A)^{-1}\left( z^{-1}\left(Av + f(0)\right)+\sum_{l=1}^{k-1}\frac{1}{z^{l+1}}\partial^{l}_{t} f(0) +\widehat{R_{k}}(z) \right).
\end{equation*}
By the inverse Laplace transform, there exists  \cite{JLZ:2017}
\begin{equation}\label{LT}
\begin{split}
   V(t) &=  \frac{1}{2\pi i} \int_{\Gamma_{\theta, \kappa}} e^{zt}  K(z) \left( Av+f(0)+ z\sum^{k-1}_{l=1} \frac{1}{z^{l+1}} \partial^{l}_{t}f(0) +z\widehat{R_{k}}(z) \right)dz,
\end{split}
\end{equation}
where
\begin{equation}\label{Gamma}
\Gamma_{\theta, \kappa}=\{ z\in \mathbb{C}: |z|=\kappa, |\arg z|\leq \theta \} \cup \{ z\in \mathbb{C}: z=re^{\pm i\theta}, r\geq \kappa \}
\end{equation}
with $\theta \in (\pi/2, \pi)$, $\kappa>0$, and
\begin{equation}\label{K}
K(z)=z^{-1}(z^{\alpha}-A)^{-1}.
\end{equation}
From \cite{LST:1996} and \cite{Thomee:2006}, we know that the operator $A$ satisfies the following resolvent estimate
\begin{equation*}\label{resolvent estimate}
\left\| (z-A)^{-1} \right\| \leq c_{\phi} |z|^{-1} \quad \forall z\in \Sigma_{\phi}
\end{equation*}
for all $\phi\in (\pi/2, \pi)$, where $\Sigma_{\theta}:=\{ z\in \mathbb{C}\backslash \{0\}:|\arg z| < \theta \}$ is a sector of the complex plane $\mathbb{C}$.
Hence, $z^{\alpha}\in \Sigma_{\theta '}$ with $\theta '=\alpha\theta<\theta<\pi$ for all $z\in \Sigma_{\theta}$. Therefore, there exist a positive constant $c$ such that
\begin{equation}\label{fractional resolvent estimate}
\left\| \left(z^{\alpha}-A\right)^{-1} \right\| \leq c |z|^{-\alpha} \quad \forall z\in \Sigma_{\theta}.
\end{equation}

\subsection{Discrete solution representation  for  correction   $L_k$ approximation \eqref{CLks}}
We next provide the following discrete solution for the  subdiffusion \eqref{CLks}.
\begin{lemma}\label{Lemma2.1}
Let $f\in C^{k}([0,T];L^{2}(\Omega))$ and $\int^{t}_{0}{(t-s)^{\alpha-1}}||\partial^{k+1}_{s}f(s)||_{L^{2}(\Omega)}ds<\infty$.  Then
\begin{equation*}
\begin{split}
V^{n}
&=\frac{1}{2\pi i}\int_{\Gamma^{\tau}_{\theta,\kappa}} e^{zt} K(z_{\tau}) \bar{\mu}(e^{-z\tau})(Av+f(0))dz+\frac{1}{2\pi i}\int_{\Gamma^{\tau}_{\theta,\kappa}}e^{zt}z_{\tau}K(z_{\tau})\tau\widetilde{R_{k}}(e^{-z\tau}) dz\\
&\quad+\frac{1}{2\pi i}\int_{\Gamma^{\tau}_{\theta,\kappa}}e^{zt}z_{\tau}K(z_{\tau})\sum^{k-1}_{l=1}\left(\frac{\gamma_{l}(e^{-z\tau})}{l!}+\sum_{j=1}^{k-1}d^{(k)}_{l,j}e^{-zt_{j}}\right)\tau^{l+1}
\partial^{l}_{t}f(0)dz
\end{split}
\end{equation*}
with $\Gamma^{\tau}_{\theta, \kappa}=\{z\in \Gamma_{\theta, \kappa}: |\Im z|\leq \pi / \tau\}$.
Here the coefficients  $d^{(k)}_{l,j}$ are given in Table \ref{Ta:corr2} and
\begin{equation}\label{DLTcoeff}
\begin{array}{rl     rl}
K(z_{\tau})    \!\!\!\!   &= z_{\tau}^{-1}(z_{\tau}^{\alpha}-A)^{-1},   & \qquad   \bar{\mu}(\xi)  \!\!\!\! & = \tau z_{\tau}\left( \frac{\xi}{1-\xi} + \sum_{j=1}^{k}a^{(k)}_{j}\xi^{j}\right), \\
\gamma_{l}(\xi)  \!\!\!\! &= \sum_{n=1}^{\infty}n^{l}\xi^{n}=\left(\xi\frac{d}{d\xi}\right)^{l}\frac{1}{1-\xi},  &~~ \qquad ~ \widetilde{R_{k}}(\xi) \!\!\!\! & = \sum_{n=1}^{\infty} R_{k}(t_{n})\xi^{n},
\end{array}
\end{equation}
and
\begin{equation}\label{SC}
z_{\tau}:=\frac{\delta(\xi)}{\tau}, \quad \delta^{\alpha}(\xi) := \sum_{j=0}^{\infty} \omega^{(k)}_{j} \xi^{j}, \quad \xi=e^{-z\tau},
\end{equation}
where $\omega^{(k)}_{j}$ be given in Appendix and $a^{(k)}_{j}$ be given in Table \ref{Ta:corr1}.
\end{lemma}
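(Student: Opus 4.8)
The plan is to pass to the generating function (discrete Laplace / $Z$-transform) $\widetilde{V}(\xi) = \sum_{n\ge 1} V^n \xi^n$, reduce the correction scheme \eqref{CLks} to an algebraic equation in the spatial operator $A$, and then recover $V^n$ by the Cauchy coefficient formula under the substitution $\xi = e^{-z\tau}$. First I would multiply every identity in \eqref{CLks} by $\xi^n$ and sum over $n\ge 1$. By the Cauchy product the discrete convolution on the left collapses, $\sum_{n\ge1}\big(\sum_{j=1}^n \omega^{(k)}_{n-j}V^j\big)\xi^n = \big(\sum_{m\ge0}\omega^{(k)}_m\xi^m\big)\widetilde{V}(\xi) = \delta^\alpha(\xi)\widetilde{V}(\xi)$, using \eqref{SC}; since $z_\tau = \delta(\xi)/\tau$, the left side becomes $(\tau^{-\alpha}\delta^\alpha(\xi) - A)\widetilde{V}(\xi) = (z_\tau^\alpha - A)\widetilde{V}(\xi)$.

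The right side is summed term by term, the only subtlety being that the correction contributions live only in the first $k$ steps. The constant datum $Av+f(0)$ carries coefficient $1+a^{(k)}_n$ for $1\le n\le k$ and $1$ thereafter, so its generating function is $\big(\tfrac{\xi}{1-\xi} + \sum_{j=1}^k a^{(k)}_j\xi^j\big)(Av+f(0))$, which equals $\tfrac{\bar{\mu}(\xi)}{\tau z_\tau}(Av+f(0))$ by the definition of $\bar{\mu}$ in \eqref{DLTcoeff}. The polynomial data give $\sum_{n\ge1}\tfrac{t_n^l}{l!}\xi^n = \tfrac{\tau^l}{l!}\gamma_l(\xi)$ with $\gamma_l$ as in \eqref{DLTcoeff}, while the starting corrections add $\tau^l\sum_{n=1}^k d^{(k)}_{l,n}\xi^n$; since $d^{(k)}_{l,k}=0$ in Table \ref{Ta:corr2}, this collapses to the finite sum $\tau^l\sum_{j=1}^{k-1}d^{(k)}_{l,j}\xi^j$ appearing in the lemma. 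Finally the remainder sums to $\widetilde{R_k}(\xi)=\sum_{n\ge1}R_k(t_n)\xi^n$. Solving for $\widetilde{V}(\xi)$ with $(z_\tau^\alpha-A)^{-1} = z_\tau K(z_\tau)$ from \eqref{K} then yields the claimed closed form, up to an overall factor of $\tau$ that the inversion will supply.

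To invert, I would apply $V^n = \tfrac{1}{2\pi i}\oint_{|\xi|=\rho}\widetilde{V}(\xi)\xi^{-n-1}\,d\xi$ on a small circle $0<\rho<1$ and change variables $\xi = e^{-z\tau}$, under which $\xi^{-n-1}\,d\xi = -\tau e^{zt_n}\,dz$ and the circle maps, with orientation reversed, onto the vertical segment $\{\Re z = -\tau^{-1}\ln\rho,\ |\Im z|\le \pi/\tau\}$. Writing $(z_\tau^\alpha-A)^{-1}=z_\tau K(z_\tau)$ and carrying this Jacobian factor $\tau$ through the three source contributions collected above turns them precisely into the three integrals $K(z_\tau)\bar{\mu}(e^{-z\tau})(Av+f(0))$, $z_\tau K(z_\tau)\tau\widetilde{R_k}(e^{-z\tau})$, and $z_\tau K(z_\tau)\sum_l(\cdots)\tau^{l+1}\partial^l_t f(0)$ of the statement. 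The remaining step is a contour deformation from this segment onto $\Gamma^\tau_{\theta,\kappa}$, justified by Cauchy's theorem together with the $2\pi/\tau$-periodicity of the integrand in $\Im z$.

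\textbf{I expect the main obstacle to be this deformation.} It requires that $z_\tau^\alpha = \tau^{-\alpha}\delta^\alpha(e^{-z\tau})$ remain in the sector $\Sigma_\theta$ throughout the region swept between the segment and $\Gamma^\tau_{\theta,\kappa}$, so that $(z_\tau^\alpha - A)^{-1}$ is analytic there and obeys the resolvent bound \eqref{fractional resolvent estimate}; this is exactly where the sectorial, $A(\vartheta_k)$-stable character of the $L_k$ symbol $\delta(\xi)$ (cf.\ Remark \ref{adr2.1}) must be invoked. In addition, the hypotheses $f\in C^k([0,T];L^2(\Omega))$ and $\int_0^t (t-s)^{\alpha-1}\|\partial^{k+1}_s f(s)\|_{L^2(\Omega)}\,ds<\infty$ are needed to control the growth of $R_k(t_n)$ through \eqref{srhsR}, guaranteeing convergence of $\widetilde{R_k}(\xi)$ for $|\xi|<1$ and hence the legitimacy of both the termwise summation and the inversion.
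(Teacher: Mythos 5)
Your proposal is correct and follows essentially the same route as the paper's proof: multiply \eqref{CLks} by $\xi^n$, sum, collapse the discrete convolution to $\delta^{\alpha}(\xi)\widetilde{V}(\xi)$, solve for $\widetilde{V}(\xi)=(z_{\tau}^{\alpha}-A)^{-1}[\cdots]$, and recover $V^n$ via Cauchy's coefficient formula under $\xi=e^{-z\tau}$ followed by deformation onto $\Gamma^{\tau}_{\theta,\kappa}$. You actually supply more detail than the paper on the inversion step (the Jacobian factor $\tau$, the fact that $d^{(k)}_{l,k}=0$ truncates the correction sum to $k-1$, and the sectoriality/periodicity needed to justify the deformation), which the paper compresses into a one-line appeal to Cauchy's integral formula, the change of variables, and Cauchy's theorem.
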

\begin{proof}

Multiplying the \eqref{CLks} by $\xi^{n}$ and summing over $n$, we obtain
\begin{equation*}
\begin{split}
&\sum^{\infty}_{n=1} \left(  \tau^{-\alpha} \sum^{n}_{j=1} \omega^{(k)}_{n-j} V^{j} \right) \xi^{n} - \sum^{\infty}_{n=1} \left( AV^{n} \right) \xi^{n}  \\
=& \left(\frac{\xi }{1-\xi} + \sum^{k}_{j=1}a^{(k)}_{j} \xi^{j}\right)(Av+f(0))
+\sum^{k-1}_{l=1}\left(\frac{\gamma_{l}(\xi)}{l!} +\sum^{k-1}_{j=1}d^{(k)}_{l,j} \xi^{j}\right)\tau^{l}\partial^{l}_{t}f(0) + \widetilde{{R}_{k}}(\xi),
\end{split}
\end{equation*}
where $\widetilde{{R}_{k}}(\xi)= \sum^{\infty}_{n=1} R_{k}(t_{n}) \xi^{n}$ and $\gamma_{l}(\xi)=\sum^{\infty}_{n=1}n^{l} \xi^{n}$. Using the equality
\begin{equation*}
\sum^{\infty}_{n=1} \left( \sum^{n}_{j=1} \omega^{(k)}_{n-j} V^{j} \right) \xi^{n}
= \sum^{\infty}_{j=0}\omega^{(k)}_{j} \xi^{j}\sum^{\infty}_{n=1} V^{n}\xi^{n} := \delta^{\alpha}(\xi) \widetilde{V}(\xi),
\end{equation*}
it yields
\begin{equation}\label{ads2.17}
\begin{split}
\widetilde{V}(\xi) = &  \left( \tau^{-\alpha} \delta^{\alpha}(\xi) -A\right)^{-1} \left[\left(\frac{\xi }{1-\xi} + \sum^{k}_{j=1}a^{(k)}_{j} \xi^{j}\right)(Av+f(0)) \right. \\
                 & \qquad \qquad \qquad \qquad \quad \left.+\sum^{k-1}_{l=1}\left(\frac{\gamma_{l}(\xi)}{l!} +\sum^{k-1}_{j=1}d^{(k)}_{l,j} \xi^{j}\right)\tau^{l}\partial^{l}_{t}f(0) + \widetilde{{R}_{k}}(\xi) \right].
\end{split}
\end{equation}
According to  Cauchy's integral formula, and the change of variables $\xi=e^{-z\tau}$, and  Cauchy's theorem, one has
\begin{equation}\label{DLT}
\begin{split}
V^{n}
&=\frac{1}{2\pi i}\int_{\Gamma^{\tau}_{\theta,\kappa}} e^{zt} K(z_{\tau}) \bar{\mu}(e^{-z\tau})(Av+f(0))dz+\frac{1}{2\pi i}\int_{\Gamma^{\tau}_{\theta,\kappa}}e^{zt}z_{\tau}K(z_{\tau})\tau\widetilde{R_{k}}(e^{-z\tau}) dz\\
&\quad+\frac{1}{2\pi i}\int_{\Gamma^{\tau}_{\theta,\kappa}}e^{zt}z_{\tau}K(z_{\tau})\sum^{k-1}_{l=1}\left(\frac{\gamma_{l}(e^{-z\tau})}{l!}
+\sum_{j=1}^{k-1}d^{(k)}_{l,j}e^{-jz\tau}\right)\tau^{l+1}
\partial^{l}_{t}f(0)dz,
\end{split}
\end{equation}
where $\Gamma^{\tau}_{\theta, \kappa}=\{z\in \Gamma_{\theta, \kappa}: |\Im z|\leq \pi / \tau\}$, $d^{(k)}_{l,j}$ be given in Table \ref{Ta:corr2} and
\begin{equation*}
\begin{array}{rl     rl}
K(z_{\tau})    \!\!\!\!   &= z_{\tau}^{-1}(z_{\tau}^{\alpha}-A)^{-1}   & \qquad   \bar{\mu}(\xi)  \!\!\!\! & = \tau z_{\tau}\left( \frac{\xi}{1-\xi} + \sum_{j=1}^{k}a^{(k)}_{j}\xi^{j}\right) \\
\gamma_{l}(\xi)  \!\!\!\! &= \sum_{n=1}^{\infty}n^{l}\xi^{n}=\left(\xi\frac{d}{d\xi}\right)^{l}\frac{1}{1-\xi}  & \qquad  \widetilde{R_{k}}(\xi) \!\!\!\! & = \sum_{n=1}^{\infty} R_{k}(t_{n})\xi^{n}.
\end{array}
\end{equation*}
Here
\begin{equation*}
z_{\tau}:=\frac{\delta(\xi)}{\tau}, \quad \delta^{\alpha}(\xi) := \sum_{j=0}^{\infty} \omega^{(k)}_{j} \xi^{j}, \quad \xi=e^{-z\tau},
\end{equation*}
and the coefficients  $\omega^{(k)}_{j}$ are  given in  Appendix and $a^{(k)}_{j}$ are defined by Table \ref{Ta:corr1}.
The proof is completed.
\end{proof}

\section{Convergence analysis}\label{Se:conver}
In this section, we provide the detailed convergence analysis of correction $L_{k}$  approximation for the subdiffusion \eqref{fee}.
First, we give some lemmas that
will be used.

\subsection{A few technical lemmas}
We introduce the polylogarithm function or Bose-Einstein integral \cite{BBR:2003} as following
\begin{equation}\label{polylogarithm function}
Li_{p}(\xi)= \sum_{j=1}^{\infty} \frac{\xi^{j}}{j^{p}}.
\end{equation}

\begin{lemma}{\cite{JLZ:2016}}\label{LipSE}
Let $p\neq 1, 2, \ldots$, the polylogarithm function $Li_{p}(e^{-z})$ satisfies the following singular expansion
\begin{equation*}\label{LpSE}
Li_{p}(e^{-z}) \sim \Gamma(1-p)z^{p-1} + \sum_{j=0}^{\infty} (-1)^{j} \zeta(p-j)\frac{z^{j}}{j!} \quad {\rm as}~ z\rightarrow 0,
\end{equation*}
where $\zeta$ denotes the Riemann zeta function, namely, $\zeta(p)=Li_{p}(1)$.
\end{lemma}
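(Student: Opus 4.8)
The plan is to derive the singular expansion from the Mellin--Barnes representation of $Li_{p}(e^{-z})$, which converts the Dirichlet-type series into a contour integral whose residues reproduce, term by term, the two families appearing in the claimed expansion. Starting from the definition \eqref{polylogarithm function} written as $Li_{p}(e^{-z})=\sum_{n=1}^{\infty} n^{-p}e^{-nz}$, I would insert the Mellin inversion formula $e^{-x}=\frac{1}{2\pi i}\int_{(c)}\Gamma(s)\,x^{-s}\,ds$ (valid for $c>0$) with $x=nz$, where $(c)$ denotes the vertical line $\Re s=c$. Interchanging the summation and the integration, which is legitimate by absolute convergence as soon as $c>\max\{0,\,1-\Re p\}$, collapses the $n$-sum into a Riemann zeta function and gives
$$Li_{p}(e^{-z})=\frac{1}{2\pi i}\int_{(c)}\Gamma(s)\,\zeta(p+s)\,z^{-s}\,ds.$$

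Next I would locate the poles of the integrand lying to the left of $(c)$. The factor $\zeta(p+s)$ has a single simple pole at $s=1-p$ with residue $1$ (from $\zeta$ having residue $1$ at its argument equal to $1$), while $\Gamma(s)$ has simple poles at $s=-j$, $j=0,1,2,\ldots$, with residue $(-1)^{j}/j!$. The hypothesis $p\neq 1,2,\ldots$ enters decisively here: it guarantees $1-p\neq -j$ for every nonnegative integer $j$, so the zeta pole never collides with a gamma pole and all the poles remain simple and distinct. Shifting the contour leftward past these poles and applying the residue theorem, the pole at $s=1-p$ contributes $\Gamma(1-p)\,z^{-(1-p)}=\Gamma(1-p)\,z^{p-1}$, and each pole at $s=-j$ contributes $\tfrac{(-1)^{j}}{j!}\,\zeta(p-j)\,z^{j}$; summing over $j$ reproduces exactly the right-hand side of the asserted expansion.

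The main obstacle is to justify the contour shift and to control the remainder, thereby upgrading the formal sum of residues into a genuine asymptotic statement as $z\to 0$ (the meaning of the symbol $\sim$). To this end I would shift the contour to a line $\Re s=-N-\tfrac12$, chosen to avoid all poles, and estimate the resulting integral together with the horizontal connecting segments. The decisive ingredient is the exponential decay of $\Gamma(s)$ along vertical lines furnished by Stirling's formula, $|\Gamma(\sigma+it)|\asymp |t|^{\sigma-1/2}e^{-\pi|t|/2}$, which dominates the at-most-polynomial growth of $\zeta(p+s)$ and guarantees that the horizontal pieces vanish and the shifted integral converges; this forces the truncation error after the $s=-N$ term to be of order $|z|^{N+1/2}$, uniformly for $z\to 0$ in a sector $|\arg z|\leq \pi-\delta$ (the representation being first secured for $|\arg z|<\pi/2$ and then extended by rotating the contour). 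Establishing this uniform bound on the shifted integrand is the technical heart of the argument, after which the expansion follows immediately.
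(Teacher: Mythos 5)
Your proof is correct, but note that the paper itself offers no proof of this lemma at all: it is imported verbatim from \cite{JLZ:2016}, which in turn rests on Flajolet's Mellin-transform singularity analysis (the paper even lists \cite{Flajolet:1999} in its bibliography). Your Mellin--Barnes argument --- the representation $Li_{p}(e^{-z})=\frac{1}{2\pi i}\int_{(c)}\Gamma(s)\zeta(p+s)z^{-s}\,ds$, the separation of the zeta pole at $s=1-p$ from the gamma poles at $s=-j$ under the hypothesis $p\neq 1,2,\ldots$, and the leftward contour shift controlled by the exponential decay of $\Gamma$ on vertical lines --- is precisely the classical proof behind that citation, correctly reconstructed, including the right place where the hypothesis is used and the right uniformity statement in a sector $|\arg z|\leq\pi-\delta$.
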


\begin{lemma}\label{Lemma:LipCA}
Let $|z\tau|\leq \frac{\pi}{\sin\theta}$ and  $\theta>\pi/2$ be close to $\pi/2$, and $p=\alpha-k$ with $\alpha\in(0,1), k\leq 6$. The series
\begin{equation}\label{LpSE}
Li_{p}(e^{-z\tau}) = \Gamma(1-p)(z\tau)^{p-1} + \sum_{j=0}^{\infty} (-1)^{j} \zeta(p-j)\frac{(z\tau)^{j}}{j!}
\end{equation}
converges absolutely.
\end{lemma}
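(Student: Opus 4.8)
The plan is to reduce the claim to a growth estimate for the Riemann zeta function at arguments tending to $-\infty$, and then apply the ratio test. First I would isolate the leading term $\Gamma(1-p)(z\tau)^{p-1}$: since $p=\alpha-k$ with $\alpha\in(0,1)$ and $k\le 6$, we have $1-p=1-\alpha+k>0$, so $\Gamma(1-p)$ is finite and this single term contributes nothing to convergence questions. It therefore suffices to prove absolute convergence of the power series $\sum_{j\ge 0}(-1)^{j}\zeta(p-j)(z\tau)^{j}/j!$. Writing $a_{j}=|\zeta(p-j)|\,|z\tau|^{j}/j!$, the whole problem becomes understanding the size of $\zeta(p-j)$ as $j\to\infty$, where the argument $p-j=\alpha-k-j$ runs off to $-\infty$ through non-integer values (so that $\zeta(p-j)$ is always finite, the pole at $1$ never being hit since $\alpha\notin\mathbb{Z}$).

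The technical heart is to quantify this growth via the functional equation
\begin{equation*}
\zeta(s)=2^{s}\pi^{s-1}\sin\!\left(\tfrac{\pi s}{2}\right)\Gamma(1-s)\zeta(1-s).
\end{equation*}
Taking $s=p-j$ gives $1-s=1-p+j\to+\infty$, so $\zeta(1-p+j)\to 1$ is bounded, while $|\sin(\pi(p-j)/2)|\le 1$; the only unbounded factors are $2^{p-j}\pi^{p-j-1}$ and $\Gamma(1-p+j)$. Hence there is a constant $C=C(p)$ with
\begin{equation*}
|\zeta(p-j)|\le C\,(2\pi)^{-j}\,\Gamma(j+1-p),\qquad j\ge 0.
\end{equation*}
Combining this with Stirling's ratio $\Gamma(j+1-p)/\Gamma(j+1)\sim j^{-p}$ yields $a_{j}\lesssim j^{-p}\big(|z\tau|/(2\pi)\big)^{j}$, so the effective radius of convergence of the series in the variable $|z\tau|$ is exactly $2\pi$.

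It then remains to apply the ratio test and to check that the hypotheses place $|z\tau|$ strictly inside this radius. Using the functional-equation estimate, $|\zeta(p-j-1)|/|\zeta(p-j)|\sim (j+1-p)/(2\pi)$, whence
\begin{equation*}
\frac{a_{j+1}}{a_{j}}=\frac{|\zeta(p-j-1)|}{|\zeta(p-j)|}\cdot\frac{|z\tau|}{j+1}\longrightarrow \frac{|z\tau|}{2\pi}\quad\text{as }j\to\infty.
\end{equation*}
By hypothesis $|z\tau|\le \pi/\sin\theta$ with $\theta>\pi/2$ close to $\pi/2$; since $\theta$ near $\pi/2$ keeps $\sin\theta>\tfrac12$, we obtain $|z\tau|\le \pi/\sin\theta<2\pi$, so the limit ratio is $|z\tau|/(2\pi)\le 1/(2\sin\theta)<1$ and the series converges absolutely. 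The main obstacle is precisely the growth estimate for $\zeta(p-j)$: it is here that the functional equation must be invoked and the sharp constant $2\pi$ extracted, and it is exactly the matching of this $2\pi$ against the admissible range $|z\tau|\le\pi/\sin\theta$ that explains why the lemma requires $\theta$ to be close to $\pi/2$. The passage from absolute convergence of the expansion to its equality with $Li_{p}(e^{-z\tau})$ then follows from Lemma \ref{LipSE} together with analytic continuation, the asymptotic expansion becoming exact once it is known to converge.
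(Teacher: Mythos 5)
Your proposal is, in substance, the paper's own argument: the paper disposes of this lemma with a one-line appeal to Lemma 3.4 of \cite{JLZ:2016}, and the proof there is precisely the reduction you carry out --- the reflection formula $\zeta(s)=2^{s}\pi^{s-1}\sin(\pi s/2)\Gamma(1-s)\zeta(1-s)$ at $s=p-j$, the resulting bound $|\zeta(p-j)|\le C(2\pi)^{-j}\Gamma(j+1-p)$, Stirling's formula, and the observation that $|z\tau|\le\pi/\sin\theta<2\pi$ once $\sin\theta>\tfrac12$, which is exactly why $\theta$ must be close to $\pi/2$. So the approach coincides with the paper's, and the core of your write-up, the comparison bound $a_{j}\lesssim j^{-p}\bigl(|z\tau|/(2\pi)\bigr)^{j}$, is correct and by itself already proves absolute convergence.

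Two inessential flaws should be repaired. First, the ratio-test limit you assert does not exist: by the same reflection formula,
\begin{equation*}
\frac{|\zeta(p-j-1)|}{|\zeta(p-j)|}
=\frac{j+1-p}{2\pi}\,\Bigl|\cot\Bigl(\frac{\pi(p-j)}{2}\Bigr)\Bigr|\,\frac{\zeta(j+2-p)}{\zeta(j+1-p)},
\end{equation*}
and the cotangent factor alternates with the parity of $j$ between $|\cot(\pi p/2)|$ and $|\tan(\pi p/2)|$; since $p=\alpha-k$, these equal $\cot(\pi\alpha/2)$ and $\tan(\pi\alpha/2)$ up to interchange, so for $\alpha$ near $0$ or $1$ one of them is large and $\limsup_{j}a_{j+1}/a_{j}$ can exceed $1$ even though the series converges. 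The fix is trivial: conclude by direct comparison from your bound $a_{j}\lesssim j^{-p}\bigl(|z\tau|/(2\pi)\bigr)^{j}$ (or by the root test, which is insensitive to bounded oscillation), and delete the ratio test. Second, ``a convergent asymptotic expansion becomes exact'' is not a valid principle: a convergent series can be the asymptotic expansion of a function it does not equal (the zero series is the asymptotic expansion of $e^{-1/x}$ as $x\to0^{+}$). What justifies the equality sign in \eqref{LpSE} is analyticity: $Li_{p}(e^{-z})-\Gamma(1-p)z^{p-1}$ extends to an analytic function on the disc $|z|<2\pi$, so its asymptotic expansion at $0$ is its Taylor series, which converges to the function there; this analyticity of the remainder is the actual content behind Lemma \ref{LipSE} and should be invoked explicitly rather than ``analytic continuation'' in the abstract.
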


\begin{proof}
The similar arguments can be performed as Lemma 3.4 in \cite{JLZ:2016}, we
omit it here.
\end{proof}

\begin{lemma}\label{Lemma:WightSE}
Let $\omega^{(k)}_{j}$ be given in  Appendix and $\xi=e^{-z\tau}$.
Then the following singularity expansion holds
\begin{equation*}\label{SE}
\sum_{j=0}^{\infty} \omega^{(k)}_{j} \xi^{j}= (z\tau)^{\alpha} + c^{(k)}_{k+1}(z\tau)^{k+1} + c^{(k)}_{k+2}(z\tau)^{k+1+\alpha} + \ldots
\end{equation*}
for some suitable constants $c^{(k)}_{k+1}, c^{(k)}_{k+2}, \ldots$.
\end{lemma}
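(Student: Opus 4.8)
The plan is to first convert the power series $\sum_{j}\omega^{(k)}_{j}\xi^{j}$ into a closed form built from the polylogarithm function, and then to read off the singular expansion by setting $\xi=e^{-z\tau}$ and invoking Lemma~\ref{Lemma:LipCA}. First I would exploit the convolution structure hidden in the coefficients. Writing each $\omega^{(k)}_{j}$ as the linear combination $\sum_{m=1}^{k}\frac{\lambda^{(k)}_{m}}{\Gamma(m+1-\alpha)}\rho_{k}(j,l,m)$ read off from the formulas in Section~\ref{Se:corre} (so that $\lambda^{(k)}_{k}=1$ in every case), and recalling that $\rho_{k}(j,l,m)=\sum_{i}\binom{k+1}{i}(-1)^{i}(j+1-i)^{m-\alpha}$ with the convention $(j+1-i)^{m-\alpha}=0$ when $j+1-i\le0$, I recognize $\rho_{k}(\cdot,\cdot,m)$ as the Cauchy product of the finite sequence $\binom{k+1}{i}(-1)^{i}$ (generating function $(1-\xi)^{k+1}$) with the sequence $n^{m-\alpha}$ (generating function $Li_{\alpha-m}(\xi)$), shifted by one index. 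Summing the series, which is legitimate by the absolute convergence guaranteed under the hypotheses on $z\tau$, then yields the compact identity
\begin{equation*}
\sum_{j=0}^{\infty}\omega^{(k)}_{j}\xi^{j}=\xi^{-1}(1-\xi)^{k+1}\sum_{m=1}^{k}\frac{\lambda^{(k)}_{m}}{\Gamma(m+1-\alpha)}Li_{\alpha-m}(\xi).
\end{equation*}

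Next, setting $\xi=e^{-z\tau}$ and $w:=z\tau$, I would insert the singular expansion of Lemma~\ref{Lemma:LipCA} (applied with $m$ in place of $k$) for each factor $Li_{\alpha-m}(e^{-w})$. Since $\Gamma(1-(\alpha-m))=\Gamma(m+1-\alpha)$, the gamma factors cancel against the leading term $\Gamma(1-(\alpha-m))\,w^{\alpha-m-1}$, so the contributions of the singular heads collect into $w^{\alpha}\,\Phi(w)$, where $\Phi(w)=e^{w}\bigl(\tfrac{1-e^{-w}}{w}\bigr)^{k+1}P_{k}(w)$ and $P_{k}(w)=\sum_{m=1}^{k}\lambda^{(k)}_{m}w^{k-m}$; meanwhile the analytic (zeta) parts, multiplied by the prefactor $e^{w}(1-e^{-w})^{k+1}=w^{k+1}(1+O(w))$, contribute only integer powers $w^{k+1},w^{k+2},\dots$, because that prefactor has a zero of order $k+1$ at the origin. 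Grouping the two families of terms reproduces exactly the asserted shape provided $\Phi(w)=1+O(w^{k+1})$: the intermediate fractional orders $w^{\alpha+1},\dots,w^{\alpha+k}$ (all lying strictly between $w^{\alpha}$ and $w^{k+1}$) then drop out, the first surviving integer power is $w^{k+1}$, and the next fractional power is $w^{k+1+\alpha}$.

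The heart of the matter, and the step I expect to be the main obstacle, is establishing $\Phi(w)=1+O(w^{k+1})$. This is precisely the consistency/order condition of the $L_{k}$ quadrature: because the degree-$k$ Lagrange interpolation reproduces every polynomial of degree $\le k$, the quadrature reproduces $\partial^{\alpha}_{t}$ exactly on such polynomials, which forces $\Phi$ to agree with $1$ through order $w^{k}$. Concretely I would show that $P_{k}$ coincides with the degree-$(k-1)$ Taylor polynomial of $e^{-w}\bigl(\tfrac{w}{1-e^{-w}}\bigr)^{k+1}$ and, crucially, that the degree-$k$ Taylor coefficient of this function vanishes; substituting these two facts into $\Phi=e^{w}\bigl(\tfrac{1-e^{-w}}{w}\bigr)^{k+1}P_{k}$ makes the analytic factors cancel and leaves $\Phi=1+O(w^{k+1})$. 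I emphasize that the matching of $P_{k}$ with the truncation gives $k$ conditions on the $k$ coefficients $\lambda^{(k)}_{m}$, while the vanishing of the degree-$k$ coefficient is the one extra, over-determined relation that encodes the gain of a full order over BDF$k$; it is genuinely a property of the interpolatory construction rather than a formal consequence.

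Finally, since $k\le6$, verifying these relations reduces to a finite, explicit check using the coefficients $\lambda^{(k)}_{m}$ tabulated via the $\omega^{(k)}_{j}$ of Section~\ref{Se:corre} and the Appendix, so the remaining work is routine symbolic bookkeeping rather than a new idea. Identifying $c^{(k)}_{k+1}$ with the leading coefficient of the integer-power family and $c^{(k)}_{k+2}$ with the coefficient of $w^{k+1}$ in $\Phi$ (and so on for the higher terms) then completes the expansion and yields the claimed form.
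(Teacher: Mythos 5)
Your proposal is correct, and its skeleton is the same as the paper's: the polylogarithm closed form \eqref{ad3.5} (your $\lambda^{(k)}_m$ are the paper's $b^{(k)}_{k+1-m}$), the singular expansion of Lemma \ref{Lemma:LipCA}, and multiplication of the two resulting series. The difference lies in how the key cancellation is treated, and here your version is the more complete one. The paper multiplies the expansion of $e^{z\tau}(1-e^{-z\tau})^{k+1}$ by that of the polylogarithm sum and simply writes the product in the asserted form; this tacitly uses that the coefficients of all intermediate fractional powers $(z\tau)^{\alpha+1},\dots,(z\tau)^{\alpha+k}$ vanish, a fact never displayed there and encoded only in the tabulated coefficients $b^{(k)}_j,\bar b^{(k)}_j$ of Table \ref{Ta:corr5}. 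You isolate precisely this point as the condition $\Phi(w)=e^{w}\bigl(\tfrac{1-e^{-w}}{w}\bigr)^{k+1}P_k(w)=1+O(w^{k+1})$ with $P_k(w)=\sum_{m=1}^{k}\lambda^{(k)}_m w^{k-m}$, equivalently that $P_k$ is the degree-$(k-1)$ Taylor polynomial of $G(w)=e^{-w}\bigl(\tfrac{w}{1-e^{-w}}\bigr)^{k+1}$ \emph{and} that the degree-$k$ Taylor coefficient of $G$ vanishes. Both statements are true --- for instance, for $k=4$ one finds $G(w)=1+\tfrac{3}{2}w+\tfrac{11}{12}w^{2}+\tfrac{1}{4}w^{3}+0\cdot w^{4}+\cdots$, reproducing the $\lambda^{(k)}_m$ and the extra zero --- and checking them for each $k\le 6$ is a finite computation, so your route does close the argument. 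In addition, your derivation of \eqref{ad3.5} from the Cauchy-product structure of $\rho_k(j,l,m)$ (the $i=j+1$ term vanishes since $m-\alpha>0$, so the double sum rearranges into $\xi^{-1}(1-\xi)^{k+1}Li_{\alpha-m}(\xi)$) is a genuine proof of an identity the paper only cites from \cite{WYY:2020}. What the paper's version buys is brevity; what yours buys is that the mechanism responsible for the order $k+1-\alpha$ --- the one extra vanishing Taylor coefficient beyond interpolatory consistency --- is made explicit and verifiable rather than left implicit in the tables.
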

\begin{proof}
From  (2.13) of \cite{WYY:2020} and  the coefficients $\omega^{(k)}_{j}$ in Appendix, it is easy to check
\begin{equation}\label{ad3.5}
\begin{split}
 \sum_{j=0}^{\infty} \omega^{(k)}_{j} \xi^{j}
& =\frac{(1-\xi)^{k+1}}{\xi} \sum_{j=1}^{k} \frac{ b^{(k)}_{k+1-j} Li_{\alpha-j}(\xi)}{\Gamma(j+1-\alpha)}\\
& =\frac{(1-e^{-z\tau})^{k+1}}{e^{-z\tau}} \sum_{j=1}^{k} \frac{ b^{(k)}_{k+1-j} Li_{\alpha-j}(e^{-z\tau})}{\Gamma(j+1-\alpha)}.
 \end{split}
\end{equation}
Here the coefficients  $b^{(k)}_{j}$ are  given in Table \ref{Ta:corr5}
and $Li_{p}(z)$ denotes the polylogarithm function  with $p=\alpha-j$ in Lemma \ref{Lemma:LipCA}.

Using  Taylor expansion and Lemma \ref{Lemma:LipCA}, for some suitable constants $d_0, \ldots$, we obtain
\begin{equation*}
\frac{(1-e^{-z\tau})^{k+1}}{e^{-z\tau}}= \sum_{j=1}^{k+1}  \bar{b}^{(k)}_{j} (z\tau)^{k+j} + \mathcal{O}\left((z\tau)^{2k+2}\right),
\end{equation*}
and
\begin{equation*}
\sum_{j=1}^{k} \frac{ b^{(k)}_{k+1-j} Li_{\alpha-j}(e^{-z\tau})}{\Gamma(j+1-\alpha)}=\sum_{j=1}^{k}  b^{(k)}_{k+1-j} (z\tau)^{\alpha-1-j}+ d_{0}(z\tau)^{0} + \ldots
\end{equation*}
with $b^{(k)}_{j}, \bar{b}^{(k)}_{j}$ in Table \ref{Ta:corr5}.
\begin{table}[!ht]
\begin{center}
\caption{The coefficients $b^{(k)}_{j}$ and $\bar{b}^{(k)}_{j}$.}
\begin{tabular}{c c c c c c c|c c c c c c c }
\hline
$L_k$  &  $b^{(k)}_{1}$       &  $b^{(k)}_{2}$       &  $b^{(k)}_{3}$       &  $b^{(k)}_{4}$        &  $b^{(k)}_{5}$          &  $b^{(k)}_{6}$
       &  $\bar{b}^{(k)}_{1}$ &  $\bar{b}^{(k)}_{2}$ &  $\bar{b}^{(k)}_{3}$ &  $\bar{b}^{(k)}_{4}$  &  $\bar{b}^{(k)}_{5}$    &  $\bar{b}^{(k)}_{6}$  &  $\bar{b}^{(k)}_{7}$     \\ \hline

$k=1$  &  $1$                 &                      &                      &                       &                         &
       &  $1$                 &  $0$                 &                      &                       &                         &                       &                          \\

$k=2$  &  $1$                 &  $\frac{1}{2}$       &                      &                       &                         &
       &  $1$                 &  $-\frac{1}{2}$      &  $\frac{1}{4}$       &                       &                         &                       &                          \\

$k=3$  &  $1$                 &  $1$                 &  $\frac{1}{3}$       &                       &                         &
       &  $1$                 &  $-1$                &  $\frac{2}{3}$       &  $-\frac{1}{3}$       &                         &                       &                          \\

$k=4$  &  $1$                 &  $\frac{3}{2}$       &  $\frac{11}{12}$     &  $\frac{1}{4}$        &                         &
       &  $1$                 &  $-\frac{3}{2}$      &  $\frac{4}{3}$       &  $-\frac{7}{8}$       &  $\frac{67}{144}$       &                       &                          \\

$k=5$  &  $1$                 &  $2$                 &  $\frac{7}{4}$       &  $\frac{5}{6}$        &  $\frac{1}{5}$          &
       &  $1$                 &  $-2$                &  $\frac{9}{4}$       &  $-\frac{11}{6}$      &  $\frac{287}{240}$      &  $-\frac{79}{120}$    &                          \\

$k=6$  &  $1$                 &  $\frac{5}{2}$       &  $\frac{17}{6}$      &  $\frac{15}{8}$       &  $\frac{137}{180}$      &  $\frac{1}{6}$
       &  $1$                 &  $-\frac{5}{2}$      &  $\frac{41}{12}$     &  $-\frac{100}{3}$     &  $\frac{619}{240}$      &  $-\frac{241}{144}$   &  $\frac{509}{540}$       \\ \hline
\end{tabular}
\label{Ta:corr5}
\end{center}
\end{table}

According to the above equations, it yields
\begin{equation*}\label{Lk}
\begin{split}
\sum_{j=0}^{\infty} \omega_{j} \xi^{j}
=& \left( \sum_{j=1}^{k+1}  \bar{b}^{(k)}_{j} (z\tau)^{k+j} + \mathcal{O}((z\tau)^{2k+2})  \right) \left(\sum_{j=1}^{k}  b^{(k)}_{k+1-j} (z\tau)^{\alpha-1-j} + d_{0}(z\tau)^{0} + \ldots \right) \\
=& (z\tau)^{\alpha}  + c^{(k)}_{k+1}(z\tau)^{k+1}+ c^{(k)}_{k+2}(z\tau)^{k+1+\alpha} +  \ldots,
\end{split}
\end{equation*}
for some suitable constants $c^{(k)}_{k+1}, c^{(k)}_{k+2}, \ldots$.
The proof is completed.
\end{proof}

\begin{lemma}\label{Lemma1}
Let $z_{\tau}$ be given by \eqref{SC} with $1\leq k \leq 6$. Then for all $z\in\Gamma^{\tau}_{\theta, \kappa}$, there exist $c_{1}, c_{2}, c >0$ such that
\begin{equation*}
|z_{\tau}-z| \leq c\tau^{k+1-\alpha}|z|^{k+2-\alpha},~~
c_{1}|z|\leq|z_{\tau}|\leq c_{2}|z| ~~{\rm and} ~~ |z^{\alpha}_{\tau} - z^{\alpha}| \leq c \tau^{k+1-\alpha}|z|^{k+1} .
\end{equation*}
\end{lemma}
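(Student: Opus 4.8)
The plan is to reduce all three estimates to the singularity expansion of the symbol established in Lemma~\ref{Lemma:WightSE}, exploiting the identity $z_\tau^\alpha = \tau^{-\alpha}\delta^\alpha(e^{-z\tau})$ that follows directly from \eqref{SC}. Throughout I write $w = z\tau$ and record two geometric facts about the truncated contour: on $\Gamma^\tau_{\theta,\kappa}$ one has $|\arg w| = |\arg z| \le \theta$ and, because $|\Im z| \le \pi/\tau$, also $\kappa\tau \le |w| \le \pi/\sin\theta =: M$. Thus $w$ ranges over a compact truncated sector $\bar{R}$. By Lemma~\ref{Lemma:WightSE},
\begin{equation*}
\tau^\alpha z_\tau^\alpha = \delta^\alpha(e^{-w}) = w^\alpha\bigl(1 + \eta(w)\bigr), \qquad \eta(w) = c^{(k)}_{k+1}w^{k+1-\alpha} + c^{(k)}_{k+2}w^{k+1} + \cdots,
\end{equation*}
so that $\eta$ is continuous on $\bar{R}$, vanishes at $w = 0$, and satisfies $\eta(w) = \mathcal{O}(w^{k+1-\alpha})$ as $w\to 0$, since $k+1-\alpha$ is the smallest exponent present.

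First I would dispatch the third estimate, which is the most direct. From the display above, $z_\tau^\alpha - z^\alpha = \tau^{-\alpha}w^\alpha\eta(w) = z^\alpha\eta(w)$. Because $\eta$ is continuous on the compact set $\bar{R}$ and vanishes to order $k+1-\alpha$ at the origin, the quotient $|\eta(w)|/|w|^{k+1-\alpha}$ extends continuously to $w=0$ and is therefore bounded by some constant $c$ on $\bar{R}$; hence $|z_\tau^\alpha - z^\alpha| \le c|z|^\alpha|w|^{k+1-\alpha} = c\,\tau^{k+1-\alpha}|z|^{k+1}$, which is exactly the claimed bound.

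Next I would treat the comparability $c_1|z| \le |z_\tau| \le c_2|z|$, which amounts to bounding $|z_\tau/z|^\alpha = |1+\eta(w)|$ above and below on $\bar{R}$. The upper bound is immediate: $|1+\eta(w)| \le 1 + \max_{\bar{R}}|\eta| =: c_2^\alpha$. The lower bound is the delicate point and is the main obstacle of the whole lemma. Since $1+\eta(w) = \delta^\alpha(e^{-w})/w^\alpha$ is continuous on the compact set $\bar{R}$, it suffices to show it never vanishes there, i.e.\ that the symbol $\delta^\alpha(e^{-w})$ has no zero on the truncated contour. On the inner part, where $|w|=\kappa\tau$ is small, this follows from $\eta(w)\to 0$. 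On the outer part, where $|w|$ is of order one, the term $\eta(w)$ need not be small, so one cannot argue by perturbation; instead one must invoke the $A(\vartheta_k)$-stability of the scheme (Remark~\ref{adr2.1}, Table~\ref{CALK}) together with the requirement that $\theta>\pi/2$ be chosen close enough to $\pi/2$ that $\arg w$ stays within the stability sector, which guarantees $\delta^\alpha(e^{-w})\neq 0$. Continuity on the compact set $\bar{R}$ then yields a positive lower bound $c_1^\alpha$, and taking $\alpha$-th roots gives the two-sided estimate.

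Finally I would recover the first estimate from the comparability. Having shown $1+\eta(w)\neq 0$ on the simply connected set $\bar{R}$ with value $1$ at the origin, I can fix the continuous branch of $(1+\eta(w))^{1/\alpha}$ equal to $1$ at $w=0$, so that $z_\tau/z = (1+\eta(w))^{1/\alpha}$ and $z_\tau - z = z\bigl[(1+\eta(w))^{1/\alpha}-1\bigr]$. The bracket is continuous on $\bar{R}$ and, since $(1+\eta)^{1/\alpha}-1 = \tfrac{1}{\alpha}\eta + \mathcal{O}(\eta^2) = \mathcal{O}(w^{k+1-\alpha})$ near the origin, the same compactness-plus-vanishing-order argument used above bounds it by $c|w|^{k+1-\alpha}$. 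Multiplying by $|z|$ gives $|z_\tau - z| \le c|z|\,|w|^{k+1-\alpha} = c\,\tau^{k+1-\alpha}|z|^{k+2-\alpha}$, completing the plan. The only genuinely hard step is the non-vanishing of $\delta^\alpha(e^{-w})$ on the outer arc; everything else is continuity and compactness applied to the expansion of Lemma~\ref{Lemma:WightSE}.
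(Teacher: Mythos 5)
Your proposal is correct and follows the same skeleton as the paper's proof: both reduce all three bounds to the singularity expansion of Lemma~\ref{Lemma:WightSE} and then compare $z_\tau$, $z_\tau^\alpha$ with $z$, $z^\alpha$. The difference is one of rigor rather than of route. The paper's argument is purely formal: it writes $\bigl(1+c^{(k)}_{k+1}(z\tau)^{k+1-\alpha}+\ldots\bigr)^{1/\alpha}=1+\frac{c^{(k)}_{k+1}}{\alpha}(z\tau)^{k+1-\alpha}+\ldots$ and concludes with $\mathcal{O}(\cdot)$ statements; this is unproblematic as $|z\tau|\to 0$, but it silently assumes uniform control of the tail of the expansion and, for the lower bound $c_1|z|\le|z_\tau|$, the non-vanishing of the symbol on the whole truncated contour, where $|z\tau|$ can be as large as $\pi/\sin\theta$ and is not small. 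You make exactly these points explicit: compactness plus the vanishing order replaces the formal expansion, the branch of the $1/\alpha$-th power is pinned down, and the zero-freeness of $\delta^{\alpha}(e^{-z\tau})$ is correctly isolated as the one genuinely hard ingredient. Be aware, however, that in this paper that ingredient is not available at the point where Lemma~\ref{Lemma1} is stated: it is only addressed afterwards, in Lemma~\ref{Lemma5}, and there only by a computer-assisted evaluation of the Bose--Einstein integral (Figures~\ref{fig_subfigures1}--\ref{fig_subfigures}), so your appeal to $A(\vartheta_k)$-stability (Remark~\ref{adr2.1}, Table~\ref{CALK}) sits at the same, partly numerical, level of rigor for $k=4,5,6$. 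In short, your write-up is a more careful rendering of the paper's own argument; what it buys is an honest accounting of where the estimate genuinely depends on the (numerically verified) stability of the scheme rather than on the expansion alone.
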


\begin{proof}
From  \eqref{SC} and Lemma \ref{Lemma:WightSE}, one has
\begin{equation*}
\begin{split}
z_{\tau}-z
     & = \frac{\delta(e^{-z\tau})}{\tau}-z =\frac{\left(\sum_{j=0}^{\infty} \omega^{(k)}_{j} e^{-jz\tau} \right)^{\frac{1}{\alpha}}-z\tau}{\tau}
       = \frac{\left((z\tau)^{\alpha} + c^{(k)}_{k+1}(z\tau)^{k+1} + \ldots\right)^{\frac{1}{\alpha}}-z\tau}{\tau}  \\
     & =\frac{ (z\tau) \left(1 + c^{(k)}_{k+1}(z\tau)^{k+1-\alpha} + \ldots\right)^{\frac{1}{\alpha}}-z\tau}{\tau}
       =\frac{ (z\tau) \left(1 + \frac{c^{(k)}_{k+1}}{\alpha}(z\tau)^{k+1-\alpha} + \ldots\right)-z\tau}{\tau} \\
     & = \mathcal{O}\left(z^{k+2-\alpha}\tau^{k+1-\alpha}\right),
\end{split}
\end{equation*}
\begin{equation*}
\frac{\left|z_{\tau}\right|}{\left|z\right|}  = \frac{ \left|\delta(e^{-z\tau})\right|}{\left|z\tau\right|}  =\left| \left(1 + c^{(k)}_{k+1}(z\tau)^{k+1-\alpha} + \ldots\right)^{\frac{1}{\alpha}} \right|
= 1 + \mathcal{O}\left(z^{k+1-\alpha}\tau^{k+1-\alpha}\right),
\end{equation*}
and
\begin{equation*}
\begin{split}
z^{\alpha}_{\tau} - z^{\alpha}
     & = \frac{\delta^{\alpha}(e^{-z\tau})}{\tau^{\alpha}}-z^{\alpha} =\frac{\sum_{j=0}^{\infty} \omega^{(k)}_{j} e^{-jz\tau} -(z\tau)^{\alpha}}{\tau^{\alpha}}
       = \frac{(z\tau)^{\alpha} + c^{(k)}_{k+1}(z\tau)^{k+1} + \ldots-(z\tau)^{\alpha}}{\tau^{\alpha}}  \\
     & = \mathcal{O}\left(z^{k+1}\tau^{k+1-\alpha}\right).
\end{split}
\end{equation*}
The proof is completed.
\end{proof}

\begin{lemma}\label{Lemma2}
Let $\xi=e^{-z\tau}$ and $z\in\Gamma^{\tau}_{\theta, \kappa}$.
Let $\bar{\mu}(\xi)$ be defined in \eqref{DLTcoeff}. Then
\begin{equation*}
|\bar{\mu}(e^{-z\tau})-1|\leq C (|z|\tau)^{k+1-\alpha}.
\end{equation*}
\end{lemma}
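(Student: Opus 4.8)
The plan is to exploit the product structure of $\bar\mu$ together with the singular expansion of Lemma \ref{Lemma:WightSE}. Recalling \eqref{DLTcoeff} and \eqref{SC}, we have $\tau z_{\tau}=\delta(e^{-z\tau})=\bigl(\sum_{j=0}^{\infty}\omega^{(k)}_{j}e^{-jz\tau}\bigr)^{1/\alpha}$, so that
\begin{equation*}
\bar{\mu}(e^{-z\tau})=\delta(e^{-z\tau})\left(\frac{e^{-z\tau}}{1-e^{-z\tau}}+\sum_{j=1}^{k}a^{(k)}_{j}e^{-jz\tau}\right)=\frac{\delta(e^{-z\tau})}{z\tau}\cdot(z\tau)P(e^{-z\tau}),
\end{equation*}
where $P(\xi)=\frac{\xi}{1-\xi}+\sum_{j=1}^{k}a^{(k)}_{j}\xi^{j}$. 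I would then estimate the two factors $\delta(e^{-z\tau})/(z\tau)$ and $(z\tau)P(e^{-z\tau})$ separately and multiply.

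For the first factor, Lemma \ref{Lemma:WightSE} gives $\delta^{\alpha}(e^{-z\tau})=(z\tau)^{\alpha}\bigl(1+c^{(k)}_{k+1}(z\tau)^{k+1-\alpha}+\ldots\bigr)$, whence, exactly as in the proof of Lemma \ref{Lemma1}, $\delta(e^{-z\tau})/(z\tau)=1+\frac{c^{(k)}_{k+1}}{\alpha}(z\tau)^{k+1-\alpha}+\ldots=1+\mathcal{O}\bigl((z\tau)^{k+1-\alpha}\bigr)$. For the second factor, I would expand $P$ in integer powers of $z\tau$ using the Bernoulli generating function $\frac{1}{e^{w}-1}=\frac{1}{w}+\sum_{n\geq0}\frac{B_{n+1}}{(n+1)!}w^{n}$ with $w=z\tau$, together with $e^{-jz\tau}=\sum_{m\geq0}\frac{(-jz\tau)^{m}}{m!}$. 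This yields $P(e^{-z\tau})=\frac{1}{z\tau}+g(z\tau)$ with $g$ analytic and
\begin{equation*}
g(z\tau)=\sum_{n\geq0}\left(\frac{B_{n+1}}{(n+1)!}+\sum_{j=1}^{k}a^{(k)}_{j}\frac{(-j)^{n}}{n!}\right)(z\tau)^{n}.
\end{equation*}

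The crucial point is that the correction coefficients $a^{(k)}_{j}$ of Table \ref{Ta:corr1} are precisely those solving the $k\times k$ linear system that annihilates the coefficients of $g$ up to order $k-1$, namely $\frac{B_{n+1}}{(n+1)!}+\sum_{j=1}^{k}a^{(k)}_{j}\frac{(-j)^{n}}{n!}=0$ for $0\leq n\leq k-1$. Hence $g(z\tau)=\mathcal{O}\bigl((z\tau)^{k}\bigr)$, so $(z\tau)P(e^{-z\tau})=1+(z\tau)g(z\tau)=1+\mathcal{O}\bigl((z\tau)^{k+1}\bigr)$. Multiplying the two factors and using $k+1-\alpha<k+1$ for $\alpha\in(0,1)$ gives
\begin{equation*}
\bar{\mu}(e^{-z\tau})=\bigl(1+\mathcal{O}((z\tau)^{k+1-\alpha})\bigr)\bigl(1+\mathcal{O}((z\tau)^{k+1})\bigr)=1+\mathcal{O}\bigl((z\tau)^{k+1-\alpha}\bigr).
\end{equation*}
On $\Gamma^{\tau}_{\theta,\kappa}$ we have $|z\tau|\leq\pi/\sin\theta$ bounded, so by Lemma \ref{Lemma:LipCA} all the series above converge absolutely and the remainder terms are uniformly controlled, which upgrades the asymptotic bound to $|\bar{\mu}(e^{-z\tau})-1|\leq C(|z|\tau)^{k+1-\alpha}$.

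The main obstacle I anticipate is verifying that the tabulated $a^{(k)}_{j}$ indeed satisfy the annihilation conditions on $g$ to the required order; this is the algebraic heart of the correction and, for each fixed $k\leq6$, can be checked directly from the linear system above or traced back to the construction of the starting-step corrections in Section \ref{Se:corre}. A secondary technical point is passing rigorously from the formal series manipulations to genuine remainder estimates, which is handled by the absolute convergence on the truncated contour $\Gamma^{\tau}_{\theta,\kappa}$ furnished by Lemma \ref{Lemma:LipCA}.
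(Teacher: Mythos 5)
Your proposal is correct and follows essentially the same route as the paper's proof: the same factorization of $\bar{\mu}(e^{-z\tau})$ into the factor $\delta(e^{-z\tau})/(z\tau)=1+\mathcal{O}\bigl((z\tau)^{k+1-\alpha}\bigr)$ controlled by Lemma \ref{Lemma:WightSE}, times the factor $(z\tau)P(e^{-z\tau})$ controlled by Taylor/Bernoulli expansion in integer powers of $z\tau$. The only difference is that you state explicitly the annihilation conditions $\frac{B_{n+1}}{(n+1)!}+\sum_{j=1}^{k}a^{(k)}_{j}\frac{(-j)^{n}}{n!}=0$, $0\leq n\leq k-1$, which the tabulated $a^{(k)}_{j}$ indeed satisfy (and which is the content hidden in the paper's phrase ``which is easy to get''), so your write-up is a more complete version of the same argument.
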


\begin{proof}
From  \eqref{DLTcoeff} and \eqref{SC}, there exists
\begin{equation*}
\begin{split}
\bar{\mu}(e^{-z\tau})
 &  =  \tau z_{\tau}\left( \frac{e^{-z\tau}}{1-e^{-z\tau}} + \sum_{j=1}^{k}a^{(k)}_{j}e^{-jz\tau}\right) = \delta(e^{-z\tau})\left( \frac{e^{-z\tau}}{1-e^{-z\tau}} + \sum_{j=1}^{k}a^{(k)}_{j}e^{-jz\tau}\right) \\
 &  =  \delta(e^{-z\tau})\left( \frac{e^{-z\tau}}{1-e^{-z\tau}}+\sum_{j=1}^{k}a^{(k)}_{j}e^{-jz\tau}\right)=\frac{\delta(e^{-z\tau})}{1-e^{-z\tau}}\left(e^{-z\tau}+\sum_{j=1}^{k}a^{(k)}_{j}e^{-jz\tau}(1-e^{-z\tau})\right)\\
 &  =   \left(1 + \mathcal{O}((z\tau)^{k+1-\alpha}) \right) \frac{z\tau}{1-e^{-z\tau}} \left(e^{-z\tau}+\sum_{j=1}^{k}a^{(k)}_{j}e^{-jz\tau}(1-e^{-z\tau})\right),
\end{split}
\end{equation*}
where the coefficients $a^{(k)}_{j}$ are given in Table \ref{Ta:corr1}.
Using  Taylor series expansion for  $e^{-z\tau}$, $e^{-jz\tau}(1-e^{-z\tau})$ and
\begin{equation*}
 \frac{z\tau}{1-e^{-z\tau}}   =  1 + \frac{1}{2} z\tau +  \frac{1}{12} (z\tau)^2 -  \frac{1}{720} (z\tau)^4  +  \frac{1}{30240} (z\tau)^6 +  \mathcal{O}\left((z\tau)^{8}\right),
\end{equation*}
which is easy to get
\begin{equation*}
|\bar{\mu}(e^{-z\tau})-1|\leq C (|z|\tau)^{k+1-\alpha}.
\end{equation*}
The proof is completed.
\end{proof}

\begin{lemma}\label{Lemma3}
Let $z\in\Gamma^{\tau}_{\theta, \kappa}$ and $z_{\tau}$  be defined in \eqref{SC}. Let $K(z)$  be given in \eqref{K} and $K(z_{\tau})$, $\bar{\mu}$ be given in \eqref{DLTcoeff}. Then
\begin{equation*}
\begin{array}{rlrr}
 \|K(z_{\tau})-K(z)\|\leq &\!\!\!C \tau^{k+1-\alpha} |z|^{k-2\alpha},  &  \|\bar{\mu}(e^{-z\tau})K(z_{\tau})-K(z)\|\leq &\!\!\!C\tau^{k+1-\alpha} |z|^{k-2\alpha},  \\
 \|K(z_{\tau})A-K(z)A\|\leq &\!\!\!C\tau^{k+1-\alpha} |z|^{k-\alpha},  &  \|\bar{\mu}(e^{-z\tau})K(z_{\tau})A-K(z)A\|\leq &\!\!\!C\tau^{k+1-\alpha} |z|^{k-\alpha}.
\end{array}
\end{equation*}
\end{lemma}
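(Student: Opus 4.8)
The plan is to prove all four bounds by the same mechanism: write each operator difference as a sum of terms in which exactly one factor carries the smallness $\tau^{k+1-\alpha}$, estimate that factor by the scalar bounds of Lemma~\ref{Lemma1} (for $|z_\tau-z|$, $|z_\tau^\alpha-z^\alpha|$, and $c_1|z|\le|z_\tau|\le c_2|z|$) and Lemma~\ref{Lemma2} (for $|\bar\mu-1|$), and control every resolvent factor by the fractional resolvent estimate \eqref{fractional resolvent estimate}, which applies to both $z$ and $z_\tau$ because $z_\tau$ remains in the sector with $|z_\tau|\asymp|z|$.

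For the first bound I would split
\[K(z_\tau)-K(z)=\bigl(z_\tau^{-1}-z^{-1}\bigr)(z_\tau^\alpha-A)^{-1}+z^{-1}\bigl[(z_\tau^\alpha-A)^{-1}-(z^\alpha-A)^{-1}\bigr].\]
Here $|z_\tau^{-1}-z^{-1}|=|z_\tau-z|/(|z_\tau||z|)\le C\tau^{k+1-\alpha}|z|^{k-\alpha}$ by Lemma~\ref{Lemma1}, and the resolvent contributes $|z|^{-\alpha}$, giving the first summand. For the second I would use the resolvent identity
\[(z_\tau^\alpha-A)^{-1}-(z^\alpha-A)^{-1}=(z_\tau^\alpha-A)^{-1}(z^\alpha-z_\tau^\alpha)(z^\alpha-A)^{-1},\]
so that $|z^\alpha-z_\tau^\alpha|\le C\tau^{k+1-\alpha}|z|^{k+1}$, two resolvent factors of $|z|^{-\alpha}$, and the prefactor $|z|^{-1}$ together yield $\tau^{k+1-\alpha}|z|^{k-2\alpha}$. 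The second bound then follows from $\bar\mu K(z_\tau)-K(z)=(\bar\mu-1)K(z_\tau)+\bigl(K(z_\tau)-K(z)\bigr)$, using Lemma~\ref{Lemma2} with $\|K(z_\tau)\|\le C|z|^{-1-\alpha}$ for the first term and the bound just proved for the second.

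The two $A$-versions require the algebraic identity that absorbs $A$ into the resolvent: since $(w^\alpha-A)^{-1}A=-I+w^\alpha(w^\alpha-A)^{-1}$, one has $K(w)A=-w^{-1}I+w^{\alpha-1}(w^\alpha-A)^{-1}$. Subtracting the versions at $z_\tau$ and $z$ produces a scalar term $(z^{-1}-z_\tau^{-1})I$, bounded by $C\tau^{k+1-\alpha}|z|^{k-\alpha}$, plus a difference of resolvent terms that I would split exactly as before, now invoking $|z_\tau^{\alpha-1}-z^{\alpha-1}|\le C\tau^{k+1-\alpha}|z|^{k}$ together with the same resolvent identity; both pieces land on $\tau^{k+1-\alpha}|z|^{k-\alpha}$. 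The fourth bound follows as the second did, noting $\|K(z_\tau)A\|\le C|z|^{-1}$.

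The main obstacle is the bookkeeping of the exponents of $|z|$: one must verify that each scalar perturbation bound combines with the correct number of resolvent factors ($|z|^{-\alpha}$ each) and prefactors so that every summand lands on precisely $|z|^{k-2\alpha}$ (non-$A$ bounds) or $|z|^{k-\alpha}$ ($A$ bounds). The one genuinely new scalar estimate is $|z_\tau^{\alpha-1}-z^{\alpha-1}|\le C\tau^{k+1-\alpha}|z|^{k}$, obtained from the mean value estimate in which the derivative $(\alpha-1)w^{\alpha-2}$ has modulus $\asymp|z|^{\alpha-2}$ on the segment joining $z$ and $z_\tau$, multiplied by $|z_\tau-z|\le C\tau^{k+1-\alpha}|z|^{k+2-\alpha}$; this uniformity requires that the whole segment stays in a region where $|w|\asymp|z|$, which is guaranteed by the comparability $c_1|z|\le|z_\tau|\le c_2|z|$ of Lemma~\ref{Lemma1}.
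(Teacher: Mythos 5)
Your proposal is correct, and the exponent bookkeeping checks out in all four estimates; however, your route through the first and third bounds differs from the paper's. The paper handles $\|K(z_{\tau})-K(z)\|$ and $\|K(z_{\tau})A-K(z)A\|$ by a mean-value argument on the operator-valued functions themselves: it cites the proofs of (4.6) and (3.12) in Lubich--Sloan--Thom\'ee, bounds the derivatives $\|K'(z)\|\leq C|z|^{-2-\alpha}$ and $\|(K(z)A)'\|\leq C|z|^{-2}$ along the segment, and multiplies by $|z_{\tau}-z|\leq C\tau^{k+1-\alpha}|z|^{k+2-\alpha}$; notably it only ever uses the first estimate of Lemma \ref{Lemma1}. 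You instead decompose algebraically (splitting off $z_{\tau}^{-1}-z^{-1}$, resp.\ $z_{\tau}^{\alpha-1}-z^{\alpha-1}$, and applying the resolvent identity to the remaining difference), which reduces everything to the scalar bounds of Lemma \ref{Lemma1} -- including the bound on $|z_{\tau}^{\alpha}-z^{\alpha}|$, which the paper proves but never uses in this lemma -- plus the resolvent estimate \eqref{fractional resolvent estimate}. What your approach buys is self-containedness: no external reference, no operator-valued derivative bounds, and the only mean-value step is for the scalar function $w\mapsto w^{\alpha-1}$, which you correctly flag as requiring $|w|\asymp|z|$ along the segment (a point the paper's operator mean-value argument also tacitly needs but does not discuss). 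What the paper's approach buys is brevity: two lines per estimate once the derivative bounds are granted. Both proofs treat the two $\bar{\mu}$ estimates identically, via $\bar{\mu}K(z_{\tau})-K(z)=(\bar{\mu}-1)K(z_{\tau})+(K(z_{\tau})-K(z))$ with Lemma \ref{Lemma2}, and both implicitly rely on Lemma \ref{Lemma5} (sectorial containment of $z_{\tau}^{\alpha}$) to apply the resolvent estimate at $z_{\tau}$.
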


\begin{proof}
From \eqref{K}, we have $K(z)A= -z^{-1} + z^{\alpha -1}(z^{\alpha}-A)^{-1}$.
Following the proof of (4.6), (3.12) in   \cite{LST:1996} and noting $\|K'(z)\| \leq C|z|^{-2-\alpha}$, $\|(K(z)A)'\| \leq C|z|^{-2}$, it implies that
\begin{equation*}
  \left\|K(z_{\tau})-K(z)\right\|  \leq  C|z|^{-2-\alpha} |z|^{k+2-\alpha}\tau^{k+1-\alpha} = C \tau^{k+1-\alpha} |z|^{k-2\alpha},
\end{equation*}
and
\begin{equation*}
  \left\|K(z_{\tau})A-K(z)A\right\|  \leq  C|z|^{-2} |z|^{k+2-\alpha}\tau^{k+1-\alpha} = C\tau^{k+1-\alpha} |z|^{k-\alpha}.
\end{equation*}
According to  Lemma \ref{Lemma2} and  $\|K(z)\| \leq C|z|^{-1-\alpha}$, $\|K(z_{\tau})\| \leq C|z|^{-1-\alpha}$, $\|K(z)A\| \leq C|z|^{-1}$, $\|K(z_{\tau})A\| \leq C|z|^{-1}$, we obtain
\begin{equation*}
\begin{split}
\|\bar{\mu}(e^{-z\tau})K(z_{\tau})-K(z)\|
\leq & \left\|\left(\bar{\mu}(e^{-z\tau}) -1\right)K(z_{\tau}) \right\| + \left\| K(z_{\tau})-K(z) \right\| \\
\leq & C (|z|\tau)^{k+1-\alpha} |z|^{-1-\alpha} + C\tau^{k+1-\alpha} |z|^{k-2\alpha} \leq  C\tau^{k+1-\alpha} |z|^{k-2\alpha},
\end{split}
\end{equation*}
and
\begin{equation*}
\begin{split}
\left\|\bar{\mu}(e^{-z\tau})K(z_{\tau})A-K(z)A \right\|
\leq & \left\|\left(\bar{\mu}(e^{-z\tau}) -1\right)K(z_{\tau})A \right\| + \left\| K(z_{\tau})A-K(z)A \right\| \\
\leq & C (|z|\tau)^{k+1-\alpha} |z|^{-1} + C\tau^{k+1-\alpha} |z|^{k-\alpha} \leq  C\tau^{k+1-\alpha} |z|^{k-\alpha}.
\end{split}
\end{equation*}
The proof is completed.
\end{proof}

\begin{lemma}\label{Lemma4}
Let $\xi=e^{-z\tau}$ and $z\in \Gamma^{\tau}_{\theta, \kappa}$. Let $\gamma_{l}(\xi)$ and $d^{(k)}_{l,j}$ be defined in \eqref{DLTcoeff} and Table \ref{Ta:corr2}, respectively. Then
\begin{equation*}
\left|\left( \frac{\gamma_{l}(e^{-z\tau})}{l!}+\sum_{j=1}^{k}d^{(k)}_{l,j}e^{-zj\tau} \right)\tau^{l+1} - \frac{1}{z^{l+1}}  \right| \leq C \tau^{k+1}|z|^{k-l}.
\end{equation*}
\end{lemma}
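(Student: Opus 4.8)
The plan is to exploit the fact that $\gamma_{l}$ is nothing but a polylogarithm of negative integer index, so that Lemma \ref{Lemma:LipCA} supplies an exact singular expansion, while the correction weights $d^{(k)}_{l,j}$ are precisely those that annihilate its low-order terms. First I would observe that
$$\gamma_{l}(\xi)=\sum_{n=1}^{\infty}n^{l}\xi^{n}=Li_{-l}(\xi),$$
so Lemma \ref{Lemma:LipCA} applies with $p=-l$. Since $\Gamma(1-p)=\Gamma(1+l)=l!$, the expansion gives, for $\xi=e^{-z\tau}$ with $z\in\Gamma^{\tau}_{\theta,\kappa}$,
$$\frac{\tau^{l+1}}{l!}\gamma_{l}(e^{-z\tau})=\frac{1}{z^{l+1}}+\frac{\tau^{l+1}}{l!}\sum_{m=0}^{\infty}(-1)^{m}\zeta(-l-m)\frac{(z\tau)^{m}}{m!},$$
where the leading term $\tau^{l+1}(z\tau)^{-l-1}=z^{-l-1}$ has been extracted and cancels exactly the subtracted $1/z^{l+1}$.

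Next I would Taylor expand each correction term via $e^{-zj\tau}=\sum_{m\geq 0}(-zj\tau)^{m}/m!$, and collect equal powers of $z\tau$, obtaining
$$\Bigl(\frac{\gamma_{l}(e^{-z\tau})}{l!}+\sum_{j=1}^{k}d^{(k)}_{l,j}e^{-zj\tau}\Bigr)\tau^{l+1}-\frac{1}{z^{l+1}}=\tau^{l+1}\sum_{m=0}^{\infty}\frac{(-1)^{m}(z\tau)^{m}}{m!}\Bigl(\frac{\zeta(-l-m)}{l!}+\sum_{j=1}^{k}d^{(k)}_{l,j}j^{m}\Bigr).$$
The decisive point is that the tabulated weights satisfy the moment conditions
$$\frac{\zeta(-l-m)}{l!}+\sum_{j=1}^{k}d^{(k)}_{l,j}j^{m}=0,\qquad m=0,1,\ldots,k-l-1,$$
so that every power $(z\tau)^{m}$ with $m\leq k-l-1$ drops out and the surviving sum starts at $m=k-l$. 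There are $k-l$ such conditions for each pair $(k,l)$ (a few being automatically met because $\zeta$ vanishes at the negative even integers), and they match the number of nonzero entries $d^{(k)}_{l,j}$ in Table \ref{Ta:corr2}; these identities are the defining relations for those entries and can be checked directly against the table.

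Finally I would bound the remainder. The first surviving term is a constant times $\tau^{l+1}(z\tau)^{k-l}$, which is of the claimed size $\tau^{k+1}|z|^{k-l}$, while the higher terms are $\tau^{l+1}(z\tau)^{m}=\tau^{k+1}|z|^{k-l}(z\tau)^{m-(k-l)}$ with $m>k-l$. Since $z\in\Gamma^{\tau}_{\theta,\kappa}$ keeps $|z\tau|$ bounded (by $\pi/\sin\theta$, as in Lemma \ref{Lemma:LipCA}), the factors $(z\tau)^{m-(k-l)}$ are uniformly bounded and the absolutely convergent tail sums to $\mathcal{O}(\tau^{k+1}|z|^{k-l})$, which yields the estimate. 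The main obstacle is the verification of the moment-matching identities for the specific rationals in Table \ref{Ta:corr2}; everything else is routine bookkeeping of substituting the polylogarithm expansion and controlling the tail through the absolute convergence guaranteed by Lemma \ref{Lemma:LipCA}.
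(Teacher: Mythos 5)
Your proof is correct, and it actually supplies the details that the paper outsources: the paper's ``proof'' of Lemma \ref{Lemma4} is a one-line citation of \cite[Lemma 3.2]{SC:2020}. The argument used there (and in \cite{JLZ:2017}) is, in substance, the same as yours but phrased with Bernoulli numbers: one writes $\tau^{l+1}\gamma_{l}(e^{-z\tau})=(-1)^{l}\tau\,\frac{d^{l}}{dz^{l}}\frac{1}{e^{z\tau}-1}$ and expands $\frac{z\tau}{e^{z\tau}-1}=\sum_{m\geq 0}B_{m}\frac{(z\tau)^{m}}{m!}$, then checks that the correction weights kill the terms below order $k-l$; since $\zeta(-n)=-B_{n+1}/(n+1)$, your zeta-value formulation of the moment conditions is literally equivalent, so the two routes differ only in bookkeeping. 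Your moment conditions do hold for the tabulated entries (e.g.\ $k=2,\,l=1$: $\zeta(-1)+\tfrac{1}{12}=0$; $k=4,\,l=3$: $\tfrac{\zeta(-3)}{3!}-\tfrac{1}{720}=\tfrac{1}{720}-\tfrac{1}{720}=0$; $k=6,\,l=5$: $\tfrac{\zeta(-5)}{5!}+\tfrac{1}{30240}=0$ since $\zeta(-5)=-\tfrac{1}{252}$), and the vanishing of $\zeta$ at negative even integers explains the zero rows of Table \ref{Ta:corr2}, exactly as you say. Two small repairs: first, Lemma \ref{Lemma:LipCA} is stated only for $p=\alpha-k$ with $\alpha\in(0,1)$, i.e.\ non-integer $p$, so you cannot invoke it verbatim at $p=-l$; you should instead appeal to Lemma \ref{LipSE} (valid for all $p\neq 1,2,\ldots$) or simply to the classical expansion of $Li_{-l}$, which is a rational function of $\xi$. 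Second, for the tail estimate you need the expansion coefficients to be summable against $|z\tau|^{m}$; since $|\zeta(-l-m)|/m!$ grows like a constant times $m^{l}(2\pi)^{-m}$, the series converges precisely for $|z\tau|<2\pi$, and this is where the restriction $|z\tau|\leq \pi/\sin\theta$ with $\theta$ close to $\pi/2$ (so $\pi/\sin\theta<2\pi$) enters — worth saying explicitly rather than attributing the convergence to Lemma \ref{Lemma:LipCA}. With those citation-level adjustments, your argument is a complete and correct proof of the stated bound.
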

\begin{proof}
The similar arguments can be performed as in \cite[Lemma 3.2]{SC:2020}, we omit it here.
\end{proof}

\begin{lemma}\label{Lemmaads3.8}
Let $\xi=e^{-z\tau}$ and $z\in \Gamma^{\tau}_{\theta, \kappa}$. Let $\gamma_{l}(\xi)$ be defined in \eqref{DLTcoeff}. Then
\begin{equation*}
\left| \frac{\gamma_{l}(e^{-z\tau})}{l!} \tau^{l+1} - \frac{1}{z^{l+1}}  \right| \leq C \tau^{k+1}.
\end{equation*}
\end{lemma}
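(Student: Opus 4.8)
The plan is to identify $\gamma_l$ with a polylogarithm and run its singular expansion, isolating an exact cancellation. Since $\gamma_{l}(\xi)=\sum_{n=1}^{\infty}n^{l}\xi^{n}=Li_{-l}(\xi)$, the order parameter is $p=-l$, a negative integer, so $p\neq1,2,\dots$ and Lemma \ref{LipSE} is available. On the truncated contour the restriction $|\Im z|\le\pi/\tau$ built into $\Gamma^{\tau}_{\theta,\kappa}$ gives $|z\tau|\le\pi/\sin\theta$, so by Lemma \ref{Lemma:LipCA} the expansion converges absolutely and may be handled termwise. With $p=-l$ we have $\Gamma(1-p)=\Gamma(1+l)=l!$, and therefore
\begin{equation*}
\gamma_{l}(e^{-z\tau})=l!\,(z\tau)^{-l-1}+\sum_{j=0}^{\infty}(-1)^{j}\zeta(-l-j)\frac{(z\tau)^{j}}{j!}.
\end{equation*}

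Next I would carry out the cancellation that is the heart of the estimate. Multiplying through by $\tau^{l+1}/l!$ sends the principal term $l!\,(z\tau)^{-l-1}$ to $\tau^{l+1}(z\tau)^{-l-1}=z^{-l-1}$, i.e.\ to $1/z^{l+1}$, which is exactly the quantity subtracted in the statement. Consequently the singular part disappears and the difference reduces to the analytic tail
\begin{equation*}
\frac{\gamma_{l}(e^{-z\tau})}{l!}\tau^{l+1}-\frac{1}{z^{l+1}}=\frac{\tau^{l+1}}{l!}\sum_{j=0}^{\infty}(-1)^{j}\zeta(-l-j)\frac{(z\tau)^{j}}{j!}=:\frac{\tau^{l+1}}{l!}\,G_{l}(z\tau),
\end{equation*}
so the whole problem becomes the estimation of the function $G_{l}$ on the contour.

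To bound $G_{l}$ and extract the stated order I would use two structural facts. First, $G_{l}$ is holomorphic as a function of $z\tau$ on the disc $|z\tau|<2\pi$ and hence uniformly bounded on the compact set $|z\tau|\le\pi/\sin\theta$, which already controls the residual uniformly in $z$. Second, the Riemann zeta function vanishes at the negative even integers, $\zeta(-2m)=0$, so every second coefficient of $G_{l}$ drops out; tracking the first surviving coefficient and then trading each retained power $(z\tau)^{j}$ for a factor of $\tau$ by means of the contour bound $|z\tau|\le\pi/\sin\theta$ (with $|z|\ge\kappa$ fixing the lower range) upgrades the crude estimate to the sharpened form $C\tau^{k+1}$, uniformly for $z\in\Gamma^{\tau}_{\theta,\kappa}$.

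The step I expect to be the main obstacle is precisely this last order bookkeeping. One must determine exactly which $\zeta(-l-j)$ survive, pair the surviving powers of $z\tau$ against the available factors of $\tau$, and verify that the accumulated cancellation raises the residual to order $\tau^{k+1}$ uniformly along the contour, where $|z|$ runs from $\kappa$ up to order $\tau^{-1}$. This is an elementary but delicate computation of the same flavour as the one invoked for Lemma \ref{Lemma4} via \cite[Lemma 3.2]{SC:2020}, and I would complete it by the same termwise comparison of the expansion of $G_{l}$ against the target power of $\tau$.
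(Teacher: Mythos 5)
Your core argument is correct and, unlike the paper --- which omits the proof entirely, deferring to \cite[Lemma 3.6]{SC:2020} --- it is self-contained and uses exactly the machinery the paper already has on hand: the identification $\gamma_{l}(\xi)=Li_{-l}(\xi)$, the singular expansion of Lemma \ref{LipSE} with $p=-l$ (so that $\Gamma(1-p)=l!$ and the principal term cancels exactly against $z^{-l-1}$ after multiplying by $\tau^{l+1}/l!$), and convergence on $|z\tau|\le\pi/\sin\theta$ as in Lemma \ref{Lemma:LipCA}. One small point worth making explicit: the regular part $G_{l}$ is analytic only on $|w|<2\pi$ (the nearest singularities of $Li_{-l}(e^{-w})$ after removing the pole at $w=0$ are $w=\pm 2\pi i$), so the uniform bound on $|w|\le\pi/\sin\theta$ requires $\sin\theta>1/2$, i.e. $\theta$ close to $\pi/2$, which the contour setup does assume.

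Your final paragraph, however, misdiagnoses where the content lies, and the repair sketched there would not work as stated. On $\Gamma^{\tau}_{\theta,\kappa}$ one has $|z\tau|\le\pi/\sin\theta=O(1)$, not $O(\tau)$: a retained power $(z\tau)^{j}$ cannot be traded for a factor $\tau^{j}$ unless you keep the factor $|z|^{j}$ in the bound --- which is precisely what Lemma \ref{Lemma4} does (bound $C\tau^{k+1}|z|^{k-l}$, made possible there by the correction weights $d^{(k)}_{l,j}$). Without correction terms no such upgrade exists: for $l<k$ the first nonvanishing coefficient of $G_{l}$ occurs already at $j=0$ or $j=1$ (since $\zeta(-l)\neq 0$ for $l$ odd and $\zeta(-l-1)\neq 0$ for $l$ even), so the difference is genuinely of size $\tau^{l+1}$ up to at most one factor of $|z\tau|$, and the uniform bound $C\tau^{k+1}$ is false for $l<k$. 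The lemma is in fact only invoked with $l=k$ (see the representation of $V^{n}$ in the proof of Lemma \ref{lemma3.9}), and in that case your own second display already finishes the proof: the remainder equals $\tau^{k+1}G_{k}(z\tau)/k!$ with $G_{k}$ uniformly bounded on the truncated contour, so no bookkeeping with zeta zeros is needed at all. Read the statement with $l=k$, delete the final paragraph, and your proof is complete --- and, as a bonus, it supplies the argument the paper itself leaves to a citation.
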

\begin{proof}
The similar arguments can be performed as in \cite[Lemma 3.6]{SC:2020}, we omit it here.
\end{proof}

Based on the idea of  \cite{WYY:2020,YKF:2018}, it is hard  to offer a rigorous   proof for  $z^{\alpha}_{\tau}\in \Sigma_{\theta_{0}}$ with $k=4,5,6$,
since the complexity of the  coefficients of  $L_k$ approximation with Bose-Einstein integral in \eqref{ad3.5}.
In a sense the computer has introduced into mathematics the idea of verification of results as happens in the natural sciences \cite{App:1984}.
Then we can check    $z^{\alpha}_{\tau} \in \Sigma_{\theta_{0}}$ with $\alpha \in [0,1)$  using  evaluating the  polylogarithm function \eqref{LipSE} or  Bose-Einstein integral by computer.
For $\alpha=1$, we know that $z^{1}_{\tau}:=z^{\alpha}_{\tau}\in \Sigma_{\theta_{0}}, \forall z\in \Sigma_{\theta}$ by \cite{JLZ:2017} and \cite{SC:2020}.
Moreover, the critical angles of $A(\vartheta_k)$-stable are increases when $\alpha$ decreases from $1$ to $0$, see  Table \ref{CALK} and Figures  \ref{fig_subfigures1}-\ref{fig_subfigures}.

\begin{lemma}\label{Lemma5}
Let $\theta>\pi/2$ be close to $\pi/2$ and $z_{\tau}$ be given by \eqref{SC}. Then there exists $\theta_{0} \in (\pi/2, \pi)$ such that
\begin{equation*}
z^{\alpha}_{\tau}\in \Sigma_{\theta_{0}}  \quad \forall z\in \Sigma_{\theta}.
\end{equation*}
\end{lemma}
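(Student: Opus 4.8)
The plan is to reduce the claim to a statement about the symbol $\delta^{\alpha}(\xi)$ and then to control its argument. Since $z_{\tau}^{\alpha}=\tau^{-\alpha}\delta^{\alpha}(e^{-z\tau})$ with $\tau^{-\alpha}>0$, we have $\arg z_{\tau}^{\alpha}=\arg\delta^{\alpha}(e^{-z\tau})$, so it suffices to produce $\theta_{0}\in(\pi/2,\pi)$ with $|\arg\delta^{\alpha}(e^{-z\tau})|\le\theta_{0}$. Because $\xi=e^{-z\tau}$ is $2\pi i/\tau$-periodic in $z$, and because on the truncated contour $\Gamma^{\tau}_{\theta,\kappa}$ one has $|\Im z|\le\pi/\tau$, I would first pass to the bounded variable $w:=z\tau$ ranging over the compact sector $S=\{w:|\arg w|\le\theta,\ 0<|w|\le\pi/\sin\theta\}$; the upper bound is exactly the range for which Lemma~\ref{Lemma:LipCA} guarantees absolute convergence of the series in \eqref{ad3.5}, so $\delta^{\alpha}(e^{-w})$ is well defined and analytic there.

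For small $|w|$ I would invoke the singular expansion of Lemma~\ref{Lemma:WightSE}, namely $\delta^{\alpha}(e^{-w})=w^{\alpha}\bigl(1+c^{(k)}_{k+1}w^{k+1-\alpha}+\cdots\bigr)$. Since $z\in\Sigma_{\theta}$ gives $w^{\alpha}\in\Sigma_{\alpha\theta}$ with $\alpha\theta<\theta$ close to $\alpha\pi/2<\pi/2$, and since the correction factor tends to $1$ as $w\to0$, continuity yields $|\arg\delta^{\alpha}(e^{-w})|\le\alpha\theta+\varepsilon$ for every $w\in S$ with $|w|\le\rho$ and some fixed small $\rho>0$, comfortably below $\pi$. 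Equivalently, this is just the quantitative bound $|z_{\tau}^{\alpha}-z^{\alpha}|\le c\tau^{k+1-\alpha}|z|^{k+1}$ of Lemma~\ref{Lemma1} combined with $z^{\alpha}\in\Sigma_{\alpha\theta}$: the perturbation rotates $z^{\alpha}$ by at most $\mathcal{O}((|z|\tau)^{k+1-\alpha})$.

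The remaining regime $\rho\le|w|\le\pi/\sin\theta$, $|\arg w|\le\theta$, is the crux and is not reachable by the asymptotic expansion alone, because there the correction terms are no longer small and the coefficients $\omega^{(k)}_{j}$ are built from the polylogarithms $Li_{\alpha-j}$ / Bose--Einstein integrals of \eqref{ad3.5}, whose combination is too intricate for a clean closed-form argument (exactly the difficulty flagged before the lemma for $k=4,5,6$). On this compact set I would verify the bound numerically, evaluating $\delta^{\alpha}(e^{-w})$ through \eqref{ad3.5} on a sufficiently fine grid and confirming $|\arg\delta^{\alpha}(e^{-w})|\le\theta_{0}$ for a uniform $\theta_{0}<\pi$; compactness together with the continuity of $w\mapsto\arg\delta^{\alpha}(e^{-w})$ (the series has no zero on $S$) upgrades the grid check to the full region. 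The case $\alpha=1$ is excluded from this computation, since there $z_{\tau}^{\alpha}$ reduces to the BDF$k$ symbol and the $A(\vartheta_{k})$-stability of \cite{JLZ:2017,SC:2020} already gives $z_{\tau}\in\Sigma_{\theta_{0}}$.

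I expect the third step to be the main obstacle: away from the origin there is no small parameter, so sector containment of $z_{\tau}^{\alpha}$ hinges entirely on the fine behaviour of the polylogarithmic coefficients, which is precisely why a computer-assisted check in the spirit of \cite{App:1984} rather than a purely analytic estimate is the natural route. Taking $\theta$ close to $\pi/2$ keeps $\alpha\theta$ bounded away from $\pi$ and enlarges the admissible $\theta_{0}$, consistent with the critical-angle behaviour recorded in Table~\ref{CALK}.
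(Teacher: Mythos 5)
Your proposal lands on the same fundamental strategy as the paper: both concede that a purely analytic proof is out of reach for $k=4,5,6$ because of the polylogarithmic structure of the coefficients in \eqref{ad3.5}, both fall back on computer-assisted verification of the sector condition in the spirit of \cite{App:1984}, and both anchor the conclusion at $\alpha=1$, where $z_{\tau}$ reduces to the BDF$k$ symbol and $A(\vartheta_k)$-stability is known from \cite{JLZ:2017,SC:2020}. Where you differ is in the mechanics of the verification. You split the compact region $|z\tau|\le\pi/\sin\theta$ into a neighborhood of the origin, handled analytically via the singular expansion of Lemma \ref{Lemma:WightSE} (equivalently Lemma \ref{Lemma1}), and a remaining compact set, handled by direct grid evaluation of the series \eqref{LpSE} guaranteed convergent by Lemma \ref{Lemma:LipCA}. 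The paper instead invests in an accurate \emph{evaluator}: it constructs an explicit $\tau_8$ rational approximation \eqref{ad3.7} of the Bose--Einstein integral, following (27) of \cite{BBR:2003}, and then traces the image of the sector boundary via the boundary locus method \cite[p.\,162]{LeVeque:2007}, combining this with the numerically observed monotonicity of the critical angles in $\alpha$ (Table \ref{CALK} and the figures). Your near-origin analytic step is a genuine addition the paper does not make explicit, and it localizes the numerical work to a region with no small parameter, which is conceptually cleaner. One caveat applies to your step three: a finite grid check plus bare continuity does not by itself certify the bound between grid points; to make that step airtight you would need a modulus-of-continuity (or Lipschitz) estimate for $w\mapsto\arg\delta^{\alpha}(e^{-w})$ on the compact set, together with non-vanishing of $\delta^{\alpha}(e^{-w})$ there, which you assert but do not establish. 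The paper's own proof is no more rigorous on this point --- it rests on plotted loci and an observed monotonicity in $\alpha$ --- so your proposal meets the same standard of (computer-assisted) justification as the original, while organizing the verification differently.
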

\begin{proof}
We next check  $z^{\alpha}_{\tau} \in \Sigma_{\theta_{0}}$ with $\alpha \in [0,1)$  by evaluating the  polylogarithm function \eqref{LipSE} or  Bose-Einstein integral  \cite{BBR:2003}.
More concretely, developed  the idea of  (27) in \cite{BBR:2003},
we  construct the high-order  $\tau_8$-approximation of order $8$ for  \eqref{LpSE}, namely,
\begin{equation}\label{ad3.7}
Li_{p}(\xi)=Li_{p}(e^{-z\tau})\approx\Gamma(1-p)(z\tau)^{p-1}+\frac{\Phi(z\tau)}{\Psi(z\tau)},~~\xi=e^{-z\tau}.
\end{equation}
Here
\begin{equation*}
\begin{split}
\Phi(z\tau)=& b_{0} - z\tau\left(b_{1}-\frac{8b_{0}b_{8}}{15b_{7}}\right) +(z\tau)^{2} \left( \frac{b_{2}}{2} + \frac{b_{0}b_{8}}{15b_{6}} - \frac{8b_{1}b_{8}}{15b_{7}} \right) \\
  & - (z\tau)^{3} \left( \frac{b_{3}}{6} - \frac{4b_{0}b_{8}}{195b_{5}} +  \frac{2b_{1}b_{8}}{15b_{6}} - \frac{4b_{2}b_{8}}{15b_{7}} \right) \\
  & +(z\tau)^{4} \left( \frac{b_{4}}{24} +\frac{b_{0}b_{8}}{312b_{4}}- \frac{4b_{1}b_{8}}{195b_{5}} +  \frac{b_{2}b_{8}}{15b_{6}} - \frac{4b_{3}b_{8}}{45b_{7}} \right) \\
  & -(z\tau)^{5} \left( \frac{b_{5}}{120} - \frac{b_{0}b_{8}}{6435b_{3}} + \frac{b_{1}b_{8}}{312b_{4}}- \frac{2b_{2}b_{8}}{195b_{5}} +  \frac{b_{3}b_{8}}{45b_{6}} - \frac{b_{4}b_{8}}{45b_{7}} \right)\\
  & +(z\tau)^{6} \left( \frac{b_{6}}{720} +\frac{b_{0}b_{8}}{128700b_{2}}- \frac{b_{1}b_{8}}{6435b_{3}}+\frac{b_{2}b_{8}}{624b_{4}}- \frac{2b_{3}b_{8}}{585b_{5}}  +  \frac{b_{4}b_{8}}{180b_{6}} - \frac{b_{5}b_{8}}{225b_{7}} \right)\\
  & -(z\tau)^{7} \left( \frac{b_{7}}{5040} - \frac{b_{0}b_{8}}{4054050b_{1}} + \frac{b_{1}b_{8}}{128700b_{2}}- \frac{b_{2}b_{8}}{12870b_{3}} \right.\\
  & \qquad \qquad \qquad \quad \left.+ \frac{b_{3}b_{8}}{1872b_{4}}- \frac{b_{4}b_{8}}{1170b_{5}} +  \frac{b_{5}b_{8}}{900b_{6}} - \frac{b_{6}b_{8}}{1350b_{7}} \right),
\end{split}
\end{equation*}
and
\begin{equation*}
\begin{split}
\Psi(z\tau)=&1+z\tau\frac{8b_{8}}{15b_{7}}+(z\tau)^{2}\frac{2b_{8}}{15b_{6}} +(z\tau)^{3}\frac{4b_{8}}{195b_{5}} +(z\tau)^{4}\frac{b_{8}}{312b_{4}} +(z\tau)^{5}\frac{b_{8}}{6435b_{3}}\\
& +(z\tau)^{6}\frac{b_{8}}{128700b_{2}} +(z\tau)^{7}\frac{b_{8}}{4054050b_{1}} +(z\tau)^{8}\frac{b_{8}}{259459200b_{0}}
\end{split}
\end{equation*}
 with
 $$b_i=\eta(p-i).$$
Moreover,
\begin{equation*}\label{tau6approximant}
\eta(p)=\frac{2^{p-1}}{2^{p-1}-1} \frac{ 1+36~2^{p} S_{2} + 315~3^{p}S_{3} + 1120~4^{p}S_{4} + 1890~5^{p}S_{5} + 1512~6^{p}S_{6} + 462~7^{p}S_{7}  }{1+36~2^{p}  + 315~3^{p} + 1120~4^{p} + 1890~5^{p} + 1512~6^{p} + 462~7^{p} },
\end{equation*}
and
\begin{equation*}\label{eta function}
S_l=\sum^{l}_{j=1}(-1)^{j} \frac{1}{j^{p}}.
\end{equation*}
From  \eqref{ad3.5}, \eqref{ad3.7}  and the boundary locus method \cite[p. 162]{LeVeque:2007}, there exists
\begin{equation*}
z^{\alpha}_{\tau}=\frac{1}{\tau^\alpha} \sum_{j=0}^{\infty} \omega_{j} \xi^{j}
 =\frac{(1-\xi)^{k+1}}{\xi} \sum_{j=1}^{k} \frac{ b_{k+1-j} Li_{\alpha-j}(\xi)}{\Gamma(j+1-\alpha)} \in \Sigma_{\theta_{0}},
\end{equation*}
since   the critical angles of $A(\vartheta_k)$-stable are increases when $\alpha$ decreases from $1$ to $0$,  see Figures \ref{fig_subfigures1}-\ref{fig_subfigures}; and
 $z^{\alpha}_{\tau}\in \Sigma_{\theta_{0}}, \forall z\in \Sigma_{\theta}$  if $\alpha=1$ in \cite{JLZ:2017,SC:2020}.
Then the desired result is obtained.
\end{proof}

\begin{figure}[!ht]
\centering
\subfigure[hologram of $z^{\alpha}_{\tau}$ for all $\alpha$]
{
      \includegraphics[width=1.5in]{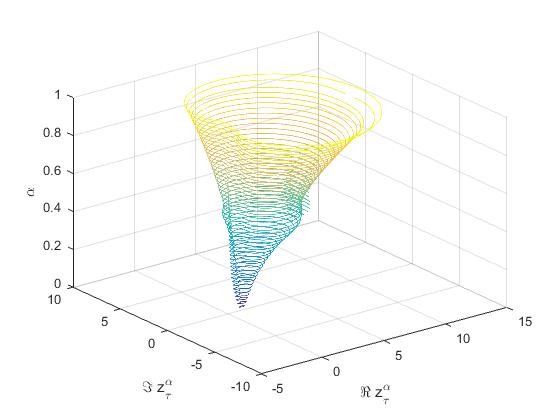}
}
\subfigure[top view of $z^{\alpha}_{\tau}$ for all $\alpha$]
{
      \includegraphics[width=1.5in]{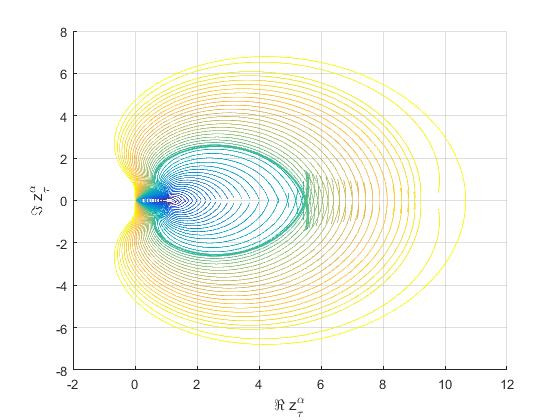}
}
\subfigure[plane graph of $z^{\alpha}_{\tau}$ for some $\alpha$]
{
      \includegraphics[width=1.5in]{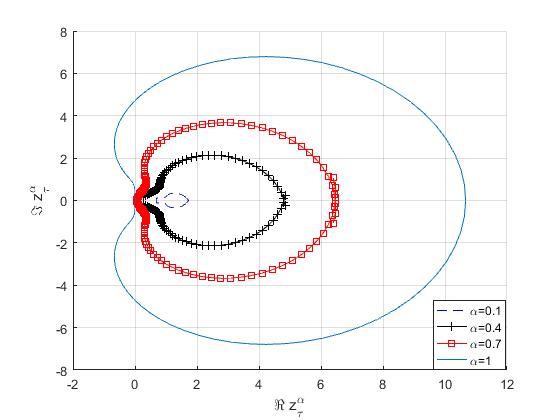}
}
\caption{$z^{\alpha}_{\tau}\in \Sigma_{\theta_{0}}$ for $k=4$.}
\label{fig_subfigures1}
\centering
  \vspace{0.4cm}
\subfigure[hologram of $z^{\alpha}_{\tau}$ for all $\alpha$]
{
      \includegraphics[width=1.5in]{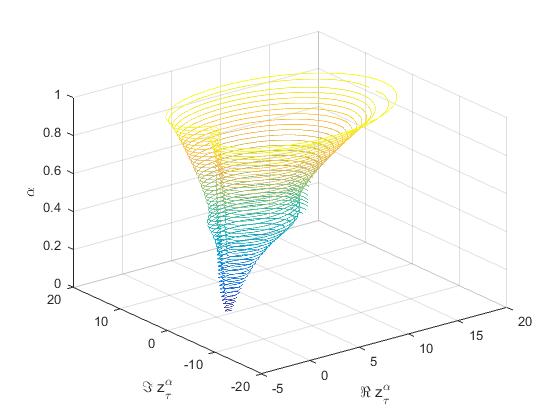}
}
\subfigure[top view of $z^{\alpha}_{\tau}$ for all $\alpha$]
{
      \includegraphics[width=1.5in]{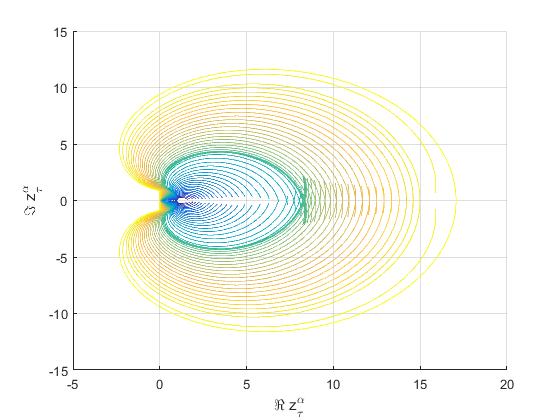}
}
\subfigure[plane graph of $z^{\alpha}_{\tau}$ for some $\alpha$]
{
      \includegraphics[width=1.5in]{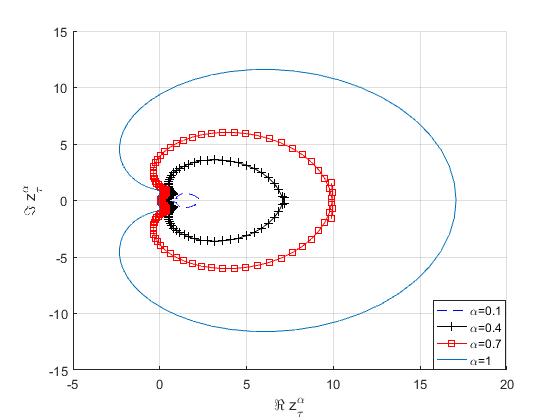}
}
\caption{$z^{\alpha}_{\tau}\in \Sigma_{\theta_{0}}$ for $k=5$.}
\centering
  \vspace{0.4cm}
\subfigure[hologram of $z^{\alpha}_{\tau}$ for all $\alpha$]
{
      \includegraphics[width=1.5in]{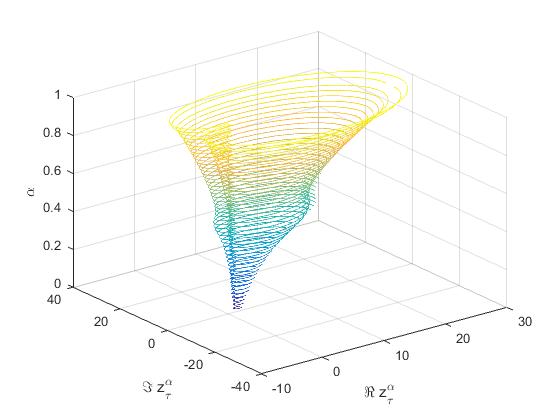}
}
\subfigure[top view of $z^{\alpha}_{\tau}$ for all $\alpha$]
{
      \includegraphics[width=1.5in]{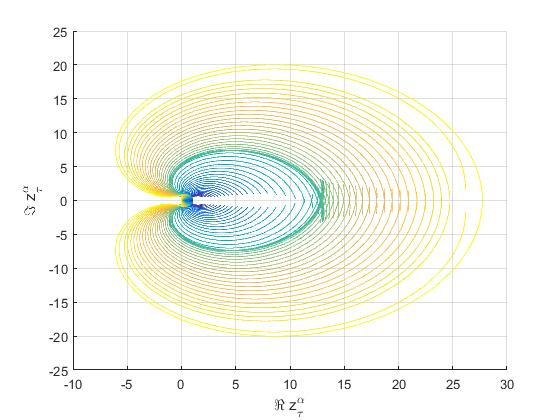}
}
\subfigure[plane graph of $z^{\alpha}_{\tau}$ for some $\alpha$]
{
      \includegraphics[width=1.5in]{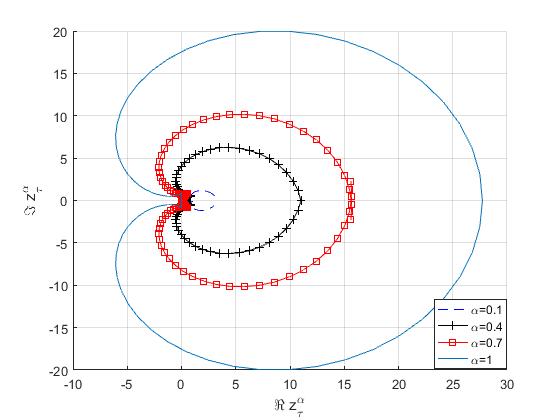}
}
\caption{$z^{\alpha}_{\tau}\in \Sigma_{\theta_{0}}$ for $k=6$.}
\label{fig_subfigures}
\end{figure}

\subsection{Error analysis for subdiffusion}
We now give the error analysis of correction $L_k$ approximation  \eqref{CLks} for \eqref{rfee}. From \eqref{srhsR}, we know that
\begin{equation*}
R_{k}(t_{n})= \frac{t_{n}^{k}}{k!}\partial^{k}_{t} f(0)
+ \left(\frac{t^{k}}{k!} \ast \partial^{k+1}_{t} f(t)\right)(t_{n}).
\end{equation*}
Then  we introduce  the following results.
\begin{lemma}\label{lemma3.9}
Let $V(t_{n})$ and $V^{n}$ be the solutions of \eqref{rfee} and \eqref{CLks}, respectively.
If $v=0$ and $f(t):=\frac{t^{k}}{k!}\partial^{k}_{t} f(0)$, then
\begin{equation*}
\left\| V(t_{n}) - V^{n} \right\|_{L^{2}(\Omega)} \leq c\tau^{k+1}t_{n}^{\alpha-1} \left\| \partial^{k}_{t} f(0) \right\|.
\end{equation*}
\end{lemma}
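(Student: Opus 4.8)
The plan is to argue entirely inside the Laplace/contour framework already assembled, specialising the two solution representations to the present data and then comparing them frequency by frequency. Since $v=0$ and $f(t)=\frac{t^{k}}{k!}\partial^{k}_{t}f(0)$, write $g:=\partial^{k}_{t}f(0)$; then $f(0)=0$, $\partial^{l}_{t}f(0)=0$ for $1\le l\le k-1$, and $\partial^{k+1}_{t}f\equiv 0$, so by \eqref{srhsR} we have $R_{k}(t_{n})=\frac{t_{n}^{k}}{k!}g$ exactly, with $\widehat{R_{k}}(z)=g/z^{k+1}$ and $\widetilde{R_{k}}(e^{-z\tau})=\frac{\tau^{k}}{k!}\gamma_{k}(e^{-z\tau})g$. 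Hence \eqref{LT} collapses (using $zK(z)=(z^{\alpha}-A)^{-1}$) to
\begin{equation*}
V(t_{n})=\frac{1}{2\pi i}\int_{\Gamma_{\theta,\kappa}}e^{zt_{n}}(z^{\alpha}-A)^{-1}\frac{1}{z^{k+1}}g\,dz,
\end{equation*}
while Lemma \ref{Lemma2.1} reduces to its middle term,
\begin{equation*}
V^{n}=\frac{1}{2\pi i}\int_{\Gamma^{\tau}_{\theta,\kappa}}e^{zt_{n}}(z_{\tau}^{\alpha}-A)^{-1}\frac{\tau^{k+1}\gamma_{k}(e^{-z\tau})}{k!}g\,dz .
\end{equation*}
Throughout, Lemma \ref{Lemma5} guarantees $z_{\tau}^{\alpha}\in\Sigma_{\theta_{0}}$, so that $(z_{\tau}^{\alpha}-A)^{-1}$ is well defined and, by \eqref{fractional resolvent estimate} and Lemma \ref{Lemma1}, satisfies $\|(z_{\tau}^{\alpha}-A)^{-1}\|\le C|z|^{-\alpha}$.

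Next I would fix $\kappa=1/t_{n}$ and split $\Gamma_{\theta,\kappa}=\Gamma^{\tau}_{\theta,\kappa}\cup(\Gamma_{\theta,\kappa}\setminus\Gamma^{\tau}_{\theta,\kappa})$, writing $V(t_{n})-V^{n}=E_{\mathrm{tail}}+E_{\mathrm{main}}$, where $E_{\mathrm{tail}}$ is the continuous integral over $|\Im z|>\pi/\tau$ and $E_{\mathrm{main}}$ is the integral over the common contour $\Gamma^{\tau}_{\theta,\kappa}$ of the difference of the two integrands. On the tail one has $|z|\gtrsim 1/\tau$, $\|(z^{\alpha}-A)^{-1}z^{-k-1}\|\le C|z|^{-\alpha-k-1}$, and $|e^{zt_{n}}|$ decays like $e^{-c|z|t_{n}}$ on the rays; a one–dimensional estimate then gives $\|E_{\mathrm{tail}}\|\le C\tau^{\,k+1+\alpha}t_{n}^{-1}e^{-cn}\le C\tau^{k+1}t_{n}^{\alpha-1}\|g\|$ since $t_{n}=n\tau$, so the tail is harmless.

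For $E_{\mathrm{main}}$ I would set $P:=(z^{\alpha}-A)^{-1}$, $P_{\tau}:=(z_{\tau}^{\alpha}-A)^{-1}$, $s:=z^{-k-1}$, $s_{\tau}:=\tau^{k+1}\gamma_{k}(e^{-z\tau})/k!$, and decompose the integrand difference as
\begin{equation*}
Ps-P_{\tau}s_{\tau}=P\,(s-s_{\tau})+(P-P_{\tau})\,s_{\tau},
\qquad P-P_{\tau}=P\,(z_{\tau}^{\alpha}-z^{\alpha})\,P_{\tau}.
\end{equation*}
The first summand is the easy one: Lemma \ref{Lemmaads3.8} gives $|s-s_{\tau}|\le C\tau^{k+1}$ and $\|P\|\le C|z|^{-\alpha}$, so its pointwise norm is $\le C\tau^{k+1}|z|^{-\alpha}$. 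Integrating against $|e^{zt_{n}}|$ over $\Gamma^{\tau}_{\theta,\kappa}$ with the scaling $\kappa=1/t_{n}$ and the standard bound $\int_{\Gamma_{\theta,\kappa}}|z|^{-\beta}|e^{zt_{n}}|\,|dz|\le C t_{n}^{\beta-1}$ produces exactly $C\tau^{k+1}t_{n}^{\alpha-1}$, the claimed estimate.

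The main obstacle is the resolvent–difference summand $(P-P_{\tau})s_{\tau}$. Using Lemma \ref{Lemma1} ($|z_{\tau}^{\alpha}-z^{\alpha}|\le C\tau^{k+1-\alpha}|z|^{k+1}$, $c_{1}|z|\le|z_{\tau}|\le c_{2}|z|$), $\|P\|,\|P_{\tau}\|\le C|z|^{-\alpha}$, and $|s_{\tau}|\le C|z|^{-k-1}$ on $\Gamma^{\tau}$ (where $|z|\tau\le\pi/\sin\theta$), the two resolvent factors force the pointwise bound $C\tau^{k+1-\alpha}|z|^{-2\alpha}$, whose naive contour integral yields only $\tau^{k+1-\alpha}t_{n}^{2\alpha-1}$. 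Recovering the clean $\tau^{k+1}t_{n}^{\alpha-1}$ from this piece is the delicate step: I would substitute the precise leading coefficient from Lemma \ref{Lemma:WightSE}, $z_{\tau}^{\alpha}-z^{\alpha}=c^{(k)}_{k+1}\tau^{k+1-\alpha}z^{k+1}+\mathcal{O}(\tau^{k+1}z^{k+1+\alpha})$, reducing the leading contribution to $c^{(k)}_{k+1}\tau^{k+1-\alpha}\int_{\Gamma^{\tau}}e^{zt_{n}}(z^{\alpha}-A)^{-2}g\,dz$, and then try to trade the surplus power of $|z|$ for a decay in $t_{n}$ by integrating by parts in $z$ through the identity $(z^{\alpha}-A)^{-2}=-\alpha^{-1}z^{1-\alpha}\frac{d}{dz}(z^{\alpha}-A)^{-1}$. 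This is the step on which I expect to spend essentially all the effort; the remaining contributions, together with the justification of the summation–integration interchanges under the hypothesis $\int_{0}^{t}(t-s)^{\alpha-1}\|\partial^{k+1}_{s}f(s)\|\,ds<\infty$, are routine.
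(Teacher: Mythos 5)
Your setup coincides with the paper's own proof: the same specialized representations of $V(t_{n})$ and $V^{n}$, the same splitting into the common contour $\Gamma^{\tau}_{\theta,\kappa}$ plus the tail $\Gamma_{\theta,\kappa}\setminus\Gamma^{\tau}_{\theta,\kappa}$ (the paper's $J_{1}$ and $J_{2}$), and the same integrand decomposition $Ps-P_{\tau}s_{\tau}=P(s-s_{\tau})+(P-P_{\tau})s_{\tau}$; your tail bound and your $C\tau^{k+1}t_{n}^{\alpha-1}$ bound for $P(s-s_{\tau})$ are correct. The gap is exactly where you place it: the term $(P-P_{\tau})s_{\tau}$ is never estimated --- only a plan is given --- and the plan cannot succeed. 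Integrating by parts through $(z^{\alpha}-A)^{-2}=-\alpha^{-1}z^{1-\alpha}\frac{d}{dz}(z^{\alpha}-A)^{-1}$ gives
\begin{equation*}
\frac{1}{2\pi i}\int_{\Gamma^{\tau}}e^{zt_{n}}(z^{\alpha}-A)^{-2}\,dz
=\frac{1}{2\pi i}\,\frac{1}{\alpha}\int_{\Gamma^{\tau}}e^{zt_{n}}\left(t_{n}z^{1-\alpha}+(1-\alpha)z^{-\alpha}\right)(z^{\alpha}-A)^{-1}\,dz+\cdots,
\end{equation*}
and with $\|(z^{\alpha}-A)^{-1}\|\leq C|z|^{-\alpha}$ and $\kappa=1/t_{n}$ both pieces integrate to $Ct_{n}\cdot t_{n}^{2\alpha-2}=Ct_{n}^{2\alpha-1}$, so nothing is gained. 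Moreover the loss is not an artifact of crude estimation: in the scalar case $A=-\lambda$, Lemma \ref{Lemma:WightSE} gives $(P-P_{\tau})s_{\tau}\approx c^{(k)}_{k+1}\tau^{k+1-\alpha}(z^{\alpha}+\lambda)^{-2}$, and
\begin{equation*}
\frac{1}{2\pi i}\int_{\Gamma_{\theta,\kappa}}e^{zt_{n}}(z^{\alpha}+\lambda)^{-2}\,dz
=t_{n}^{2\alpha-1}E'_{\alpha,\alpha}(-\lambda t_{n}^{\alpha}),
\qquad E_{\alpha,\alpha}(w)=\sum_{m\geq0}\frac{w^{m}}{\Gamma(\alpha m+\alpha)},
\end{equation*}
which is nonzero for generic $\lambda t_{n}^{\alpha}$. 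Hence for fixed $t_{n}$ this piece of the error is genuinely of size $\tau^{k+1-\alpha}$, not $\tau^{k+1}$, and no contour manipulation will produce the stated bound $\tau^{k+1}t_{n}^{\alpha-1}$.

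You should also know that the paper does not close this gap either: its proof bounds both $J_{1}$ and $J_{2}$ by $c\tau^{k+1-\alpha}t_{n}^{2\alpha-1}\|\partial^{k}_{t}f(0)\|$ --- precisely the ``naive'' estimate you describe --- and then asserts that the desired result follows, even though $\tau^{k+1-\alpha}t_{n}^{2\alpha-1}=n^{\alpha}\,\tau^{k+1}t_{n}^{\alpha-1}$ exceeds the claimed bound by the factor $n^{\alpha}$. So your diagnosis of the discrepancy between the lemma statement and what these contour estimates yield is accurate. What is actually proved (and what suffices for the $l=k$ term $t_{n}^{2\alpha+k-(k+1)}=t_{n}^{2\alpha-1}$ in the concluding theorem) is the weaker estimate $\|V(t_{n})-V^{n}\|\leq c\tau^{k+1-\alpha}t_{n}^{2\alpha-1}\|\partial^{k}_{t}f(0)\|$; the consistent repair is to state Lemma \ref{lemma3.9} with this bound and propagate it through Lemma \ref{lemma3.10}, rather than to seek a cancellation that neither you nor the paper exhibits.
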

\begin{proof}
Using    \eqref{LT} and \eqref{DLT}, there exist
\begin{equation*}
V(t_{n})=\frac{1}{2\pi i}\int_{\Gamma_{\theta,\kappa}}{e^{zt_{n}}(z^{\alpha}-A)^{-1}\frac{1}{z^{k+1}}\partial^{k}_{t} f(0)}dz,
\end{equation*}
and
\begin{equation*}
V^{n}=\frac{1}{2\pi i}\int_{\Gamma^{\tau}_{\theta,\kappa}}e^{zt_{n}}\left(z_{\tau}^{\alpha} - A\right)^{-1}
\frac{\gamma_{k}(e^{-z\tau})}{k!}\tau^{k+1} \partial^{k}_{t} f(0) dz,
\end{equation*}
where  $\theta\in (\pi/2,\pi)$ is  sufficiently close to $\pi/2$ and $\kappa=t_{n}^{-1}$ in \eqref{Gamma}.
It leads to
\begin{equation*}
\begin{split}
&V(t_{n})-V^{n}=J_1 + J_2
\end{split}
\end{equation*}
with
\begin{equation*}
\begin{split}
J_1
=\frac{1}{2\pi i}\int_{\Gamma^{\tau}_{\theta,\kappa}}e^{zt_{n}}\left(\left(z^{\alpha} - A\right)^{-1}\frac{1}{z^{k+1}}
-\left(z_{\tau}^{\alpha} - A\right)^{-1}\frac{\gamma_{k}(e^{-z\tau})}{k!}\tau^{k+1}\right) \partial^{k}_{t} f(0) dz,
\end{split}
\end{equation*}
and
\begin{equation*}
\begin{split}
J_2
=\frac{1}{2\pi i}\int_{\Gamma_{\theta,\kappa}\setminus\Gamma^{\tau}_{\theta,\kappa}}{e^{zt_{n}}\left(z^{\alpha} - A\right)^{-1}\frac{1}{z^{k+1}}\partial^{k}_{t} f(0)}dz.
\end{split}
\end{equation*}
According to the triangle inequality, \eqref{fractional resolvent estimate} and Lemmas  \ref{Lemma1},\ref{Lemma3},\ref{Lemmaads3.8},\ref{Lemma5}, one has
\begin{equation*}
\begin{split}
  \| J_1 \|
  &  \leq c\tau^{k+1-\alpha}\|\partial^{k}_{t} f(0)\|\left( \int^{\frac{\pi}{\tau\sin\theta}}_{\kappa} e^{rt_{n}\cos\theta} r^{-2\alpha}dr +\int^{\theta}_{-\theta}e^{\kappa t_{n} \cos\psi} \kappa^{1-2\alpha} d\psi \right)\\
  &  \leq  c\tau^{k+1-\alpha}t_{n}^{2\alpha-1}\|\partial^{k}_{t} f(0)\|,
\end{split}
\end{equation*}
for the last inequality,   we use
\begin{equation}\label{ad3.3.09}
\begin{split}
& \int^{\frac{\pi}{\tau\sin\theta}}_{\kappa} e^{rt_{n}\cos\theta} r^{-2\alpha}dr = t_n^{2\alpha-1} \int^{\frac{t_n\pi}{\tau\sin\theta}}_{t_n\kappa} e^{s\cos\theta} s^{-2\alpha}ds  \leq c t_n^{2\alpha-1},\\
& \int^{\theta}_{-\theta}e^{\kappa t_{n} \cos\psi} \kappa^{1-2\alpha} d\psi
= t_{n}^{2\alpha-1}\int^{\theta}_{-\theta}e^{\kappa t_{n} \cos\psi} \left(\kappa t_{n}\right)^{1-2\alpha} d\psi \leq  ct_{n}^{2\alpha-1}.
\end{split}
\end{equation}
From   \eqref{fractional resolvent estimate}, it yields
\begin{equation*}
\begin{split}
\|J_2 \|_{L^2(\Omega)}
&\leq c \|\partial^{k}_{t} f(0)\|_{L^2(\Omega)} \int^{\infty}_{\frac{\pi}{\tau\sin\theta}} e^{rt_{n}\cos\theta}r^{-k-1-\alpha}dr\\
&\leq c\tau^{k+1-\alpha} \|\partial^{k}_{t} f(0)\|_{L^2(\Omega)} \int^{\infty}_{\frac{\pi}{\tau\sin\theta}} e^{rt_{n}\cos\theta}r^{-2\alpha}dr
\leq  c\tau^{k+1-\alpha}t_{n}^{2\alpha-1}\|\partial^{k}_{t} f(0)\|_{L^2(\Omega)}.
\end{split}
\end{equation*}
Then  the desired result is obtained.
\end{proof}
\begin{lemma}\label{lemma3.10}
Let $V(t_{n})$ and $V^{n}$ be the solutions of \eqref{rfee} and \eqref{CLks}, respectively.
If $v=0$ and $f(t):=\frac{t^{k}}{k!}\ast\partial^{k+1}_{t} f(t)$, then
\begin{equation*}
\left\|V(t_{n})-V^{n}\right\|\leq c\tau^{k+1-\alpha}\int_{0}^{t_{n}}(t_{n}-s)^{2\alpha-1}
\left\|\partial^{k+1}_{s}f(s)\right\|ds.
\end{equation*}
\end{lemma}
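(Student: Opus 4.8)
The plan is to mirror the contour-integral argument of Lemma \ref{lemma3.9}, but to keep the convolution source intact so that the extra time integral in the statement emerges from a Duhamel-type superposition. Write $w(s):=\partial^{k+1}_{s}f(s)$ and $g(t):=\left(\frac{t^{k}}{k!}\ast w\right)(t)$. Under the present hypotheses the data $Av$, $f(0)$ and $\partial^{l}_{t}f(0)$ $(1\le l\le k-1)$ all vanish, so the only surviving term in \eqref{LT} and in Lemma \ref{Lemma2.1}/\eqref{DLT} is the one carrying the truncation remainder, which here equals $g$. Since the Laplace transform of $\frac{t^{k}}{k!}$ is $z^{-(k+1)}$ we have $\widehat{g}(z)=z^{-(k+1)}\widehat{w}(z)$, and using $K(z)z=(z^{\alpha}-A)^{-1}$ and $z_{\tau}K(z_{\tau})=(z_{\tau}^{\alpha}-A)^{-1}$ I would start from
\begin{equation*}
V(t_{n})=\frac{1}{2\pi i}\int_{\Gamma_{\theta,\kappa}} e^{zt_{n}}(z^{\alpha}-A)^{-1}z^{-(k+1)}\widehat{w}(z)\,dz,\qquad V^{n}=\frac{1}{2\pi i}\int_{\Gamma^{\tau}_{\theta,\kappa}} e^{zt_{n}}(z_{\tau}^{\alpha}-A)^{-1}\tau\widetilde{R_{k}}(e^{-z\tau})\,dz.
\end{equation*}

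Next I would exploit the convolution structure. Because $R_{k}(t_{m})=g(t_{m})=\int_{0}^{t_{m}}\frac{(t_{m}-s)^{k}}{k!}w(s)\,ds$ is genuinely a convolution sampled on the grid, interchanging the $s$-integral with the contour integral and with the summation in $\tau\widetilde{R_{k}}(e^{-z\tau})=\tau\sum_{m}g(t_{m})e^{-mz\tau}$ lets me write both solutions as superpositions
\begin{equation*}
V(t_{n})=\int_{0}^{t_{n}}\Xi(t_{n}-s)\,w(s)\,ds,\qquad V^{n}=\int_{0}^{t_{n}}\Xi^{n}_{\tau}(s)\,w(s)\,ds,
\end{equation*}
where $\Xi(t)=\frac{1}{2\pi i}\int_{\Gamma_{\theta,\kappa}}e^{zt}(z^{\alpha}-A)^{-1}z^{-(k+1)}\,dz$ is the continuous solution kernel for the monomial source $\frac{t^{k}}{k!}$ and $\Xi^{n}_{\tau}(s)$ is its discrete counterpart, built from the convolution-quadrature symbol $\tau\sum_{t_{m}>s}\frac{(t_{m}-s)^{k}}{k!}e^{-mz\tau}$. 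The error then reduces to the single operator-kernel estimate
\begin{equation*}
\left\|V(t_{n})-V^{n}\right\|\leq \int_{0}^{t_{n}}\left\|\Xi(t_{n}-s)-\Xi^{n}_{\tau}(s)\right\|\,\left\|w(s)\right\|\,ds;
\end{equation*}
this is exactly the Duhamel superposition of Lemma \ref{lemma3.9}, which already produced the factor $\tau^{k+1-\alpha}t_{n}^{2\alpha-1}$ for the monomial source, now with $t_{n}$ replaced by the shift $t_{n}-s$.

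The main obstacle, and the bulk of the work, is the pointwise kernel bound $\|\Xi(t_{n}-s)-\Xi^{n}_{\tau}(s)\|\leq c\,\tau^{k+1-\alpha}(t_{n}-s)^{2\alpha-1}$, to be proved uniformly in $s\in(0,t_{n})$. I would establish it by repeating the $J_{1}+J_{2}$ split of Lemma \ref{lemma3.9} with $\kappa=(t_{n}-s)^{-1}$: the tail $J_{2}$ over $\Gamma_{\theta,\kappa}\setminus\Gamma^{\tau}_{\theta,\kappa}$ is controlled by \eqref{fractional resolvent estimate} together with $|z|\geq \pi/(\tau\sin\theta)$, while on the common truncated contour $J_{1}$ splits into $[(z^{\alpha}-A)^{-1}-(z_{\tau}^{\alpha}-A)^{-1}]z^{-(k+1)}$ plus $(z_{\tau}^{\alpha}-A)^{-1}$ times the symbol defect. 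The resolvent difference is handled by Lemmas \ref{Lemma1}, \ref{Lemma3} and \ref{Lemma5} (keeping $z_{\tau}^{\alpha}$ in a sector and using $|z_{\tau}^{\alpha}-z^{\alpha}|\leq c\tau^{k+1-\alpha}|z|^{k+1}$), whereas for a grid-aligned shift $s=t_{j}$ the symbol defect collapses to $e^{-jz\tau}\big(\tfrac{\tau^{k+1}}{k!}\gamma_{k}(e^{-z\tau})-z^{-(k+1)}\big)$, which Lemma \ref{Lemmaads3.8} bounds by $C\tau^{k+1}$. The one genuinely new point is to show that admitting a non-grid shift $s$ in $\tau\sum_{t_{m}>s}\frac{(t_{m}-s)^{k}}{k!}e^{-mz\tau}$ does not spoil this $\mathcal{O}(\tau^{k+1})$ estimate; this follows by comparing with the nearest grid point and using the smoothness of $t\mapsto t^{k}$. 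Combining the resolvent and symbol bounds, the radial substitutions of \eqref{ad3.3.09} give the integrand bound $c\,\tau^{k+1-\alpha}(t_{n}-s)^{2\alpha-1}$, and integrating against $\|w(s)\|=\|\partial^{k+1}_{s}f(s)\|$ yields the claim.
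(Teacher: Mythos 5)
Your proposal matches the paper's proof in all essentials: the paper likewise writes $V(t_{n})-V^{n}$ as the convolution of $\partial^{k+1}_{t}f$ with the difference of the continuous and discrete monomial-source kernels ($\mathscr{E}\ast\frac{t^{k}}{k!}$ versus $\mathscr{E}_{\tau}\ast\frac{t^{k}}{k!}$ in its notation), invokes Lemma \ref{lemma3.9} for the grid-point values of this difference, and then---exactly your ``genuinely new point''---extends the bound to non-grid arguments by Taylor-expanding about the nearest grid point and controlling the remainders via $\|\mathscr{E}(t)\|\le ct^{\alpha-1}$ and $\|\mathscr{E}^{n}_{\tau}\|\le c\tau t_{n}^{\alpha-1}$. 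The only cosmetic difference is that you phrase this comparison at the symbol level, whereas the paper carries it out directly on the physical-space kernels.
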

\begin{proof}
By  \eqref{LT}, we obtain
\begin{equation}\label{nas3.6}
\begin{split}
V(t_{n})
&=\frac{1}{2\pi i}\int_{\Gamma_{\theta,\kappa}}{e^{zt_{n}}(z^{\alpha} - A)^{-1}\widehat{f}(z)}dz=(\mathscr{E}(t)\ast f(t))(t_{n})\\
&=\left(\mathscr{E}(t)\ast \left(\frac{t^{k}}{k!}\ast  \partial^{k+1}_{t}f(t)\right)\right)(t_{n})
=\left(\left(\mathscr{E}(t)\ast\frac{t^{k}}{k!}\right)\ast  \partial^{k+1}_{t}f(t)\right)(t_{n})
\end{split}
\end{equation}
with
\begin{equation}\label{nas3.007}
  \mathscr{E}(t)= \frac{1}{2\pi i}\int_{\Gamma_{\theta,\kappa}}{e^{zt}(z^{\alpha} - A)^{-1}}dz.
\end{equation}
From  \eqref{ads2.17}, it yields
\begin{equation*}
\begin{split}
\widetilde{V}(\xi)
&=\left(z^{\alpha}_{\tau} - A\right)^{-1}\widetilde{f}(\xi)
=\widetilde{\mathscr{E_{\tau}}}(\xi)\widetilde{f}(\xi)=\sum^{\infty}_{n=0}\mathscr{E}^{n}_{\tau}\xi^{n}\sum^{\infty}_{j=0}f^j\xi^{j}\\
&=\sum^{\infty}_{n=0}\sum^{\infty}_{j=0}\mathscr{E}^{n}_{\tau}f^j\xi^{n+j}=\sum^{\infty}_{j=0}\sum^{\infty}_{n=j}\mathscr{E}^{n-j}_{\tau}f^j\xi^{n}
=\sum^{\infty}_{n=0}\sum^{n}_{j=0}\mathscr{E}^{n-j}_{\tau}f^j\xi^{n}=\sum^{\infty}_{n=0}V^n\xi^{n}
\end{split}
\end{equation*}
with
\begin{equation*}
V^{n}=\sum^{n}_{j=0}\mathscr{E}^{n-j}_{\tau}f^j:=\sum^{n}_{j=0}\mathscr{E}^{n-j}_{\tau}f(t_{j}).
\end{equation*}
Here $\sum^{\infty}_{n=0}\mathscr{E}^{n}_{\tau}\xi^{n}=\widetilde{\mathscr{E_{\tau}}}(\xi)=\left( z^{\alpha}_{\tau} - A\right)^{-1}$
and by Cauchy's integral formula and the change of variables $\xi=e^{-z\tau}$ give the following representation for arbitrary $\rho \in (0,1)$
 \begin{equation*}
\mathscr{E}^{n}_{\tau}=\frac{1}{2\pi i}\int_{|\xi|=\rho}{\xi^{-n-1}\widetilde{\mathscr{E_{\tau}}}(\xi)}d\xi
=\frac{\tau}{2\pi i}\int_{\Gamma^{\tau}_{\theta,\kappa}}{e^{zn\tau}\left(z^{\alpha}_{\tau} - A\right)^{-1}}dz,
\end{equation*}
where  $\theta\in (\pi/2,\pi)$ is  sufficiently close to $\pi/2$ and $\kappa=t_{n}^{-1}$ in \eqref{Gamma}.

According to Lemma \ref{Lemma5} and \eqref{fractional resolvent estimate}, \eqref{ad3.3.09}, there exists
\begin{equation}\label{3.0002}
\|\mathscr{E}^{n}_{\tau}\| \leq c \tau \left( \int^{\frac{\pi}{\tau\sin\theta}}_{\kappa} e^{rt_{n}\cos\theta} r^{-\alpha}dr +\int^{\theta}_{-\theta}e^{\kappa t_{n}\cos\psi} \kappa^{1-\alpha}  d\psi\right)
\leq c\tau t_{n}^{\alpha-1}.
\end{equation}
Let $ \mathscr{E}_{\tau}(t)=\sum^{\infty}_{n=0}\mathscr{E}^{n}_{\tau}\delta_{t_{n}}(t)$, with $\delta_{t_{n}}$ being the Dirac delta function at $t_{n}$.
Then
\begin{equation}\label{nas3.8}
\begin{split}
  (\mathscr{E}_{\tau}(t)\ast f(t))(t_{n})
  &=\left(\sum^{\infty}_{j=0}\mathscr{E}^{j}_{\tau}\delta_{t_{j}}(t) \ast f(t) \right)(t_{n})\\
  &=\sum^{n}_{j=0}\mathscr{E}^{j}_{\tau}f(t_{n}-t_{j})
  =\sum^{n}_{j=0}\mathscr{E}^{n-j}_{\tau}f(t_{j})=V^{n}.
\end{split}
\end{equation}
Moreover, using the above equation and  \eqref{DLTcoeff}, there exist
\begin{equation*}
\begin{split}
  \widetilde{(\mathscr{E}_{\tau}\ast t^{k})}(\xi)
& = \sum^{\infty}_{n=0} \sum^{n}_{j=0}\mathscr{E}^{n-j}_{\tau}t_{j}^{k}\xi^{n}
  =\sum^{\infty}_{j=0} \sum^{\infty}_{n=j}\mathscr{E}^{n-j}_{\tau}t_{j}^{k}\xi^{n}
  =\sum^{\infty}_{j=0} \sum^{\infty}_{n=0}\mathscr{E}^{n}_{\tau}t_{j}^{k}\xi^{n+j}\\
& =\sum^{\infty}_{n=0}\mathscr{E}^{n}_{\tau}\xi^{n}\sum^{\infty}_{j=0}t_{j}^{k}\xi^{j}
  =\widetilde{\mathscr{E_{\tau}}}(\xi) \tau^{k}\sum^{\infty}_{j=0}j^{k}\xi^{j}
  =\widetilde{\mathscr{E}_{\tau}}(\xi) \tau^{k}\gamma_{k}(\xi).
\end{split}
\end{equation*}
Using  \eqref{nas3.6}, \eqref{nas3.8} and  Lemma \ref{lemma3.9}, we have the following estimate
\begin{equation}\label{nad3.10}
\left\|\left((\mathscr{E}_{\tau}-\mathscr{E}) \ast \frac{t^{k}}{k!}\right)(t_{n})\right\| \leq c\tau^{k+1}t_{n}^{\alpha-1}.
\end{equation}

Next, we prove the following inequality \eqref{3.0003}  for $t>0$
\begin{equation}\label{3.0003}
\left\|\left((\mathscr{E}_{\tau}-\mathscr{E}) \ast \frac{t^{k}}{k!}\right)(t)\right\| \leq c\tau^{k+1}t^{\alpha-1},\quad \forall t\in (t_{n-1},t_{n}).
\end{equation}
By Taylor series expansion of $\mathscr{E}(t)$ at $t=t_{n}$, we get
\begin{equation*}
\begin{split}
 \left( \mathscr{E} \ast \frac{t^{k}}{k!}\right)(t)
=&\left( \mathscr{E} \ast \frac{t^{k}}{k!}\right)(t_{n})+(t-t_{n})\left( \mathscr{E} \ast \frac{t^{k-1}}{(k-1)!}\right)(t_{n})+\cdots + \frac{(t-t_{n})^{k-1}}{(k-1)!}\left( \mathscr{E} \ast t \right)(t_{n}) \\
 & + \frac{(t-t_{n})^{k}}{k!}\left( \mathscr{E} \ast 1 \right)(t_{n})
 +\frac{1}{k!}\int^{t}_{t_{n}}(t-s)^{k}\mathscr{E}(s)ds,
\end{split}
\end{equation*}
which  also holds  for $  \left( \mathscr{E}_{\tau} \ast \frac{t^{k}}{k!}\right)(t) $.
Therefore, using  \eqref{nad3.10},  it yields
\begin{equation*}
\left\|\left((\mathscr{E}_{\tau}-\mathscr{E}) \ast \frac{t^{l}}{l!}\right)(t_{n})\right\| \leq c\tau^{l+1}t_{n}^{\alpha-1}.
\end{equation*}
According to  \eqref{nas3.007}, \eqref{fractional resolvent estimate} and \eqref{ad3.3.09}, one has
\begin{equation*}
\begin{split}
\| \mathscr{E}(t) \|
\leq c \left( \int^{\infty}_{\kappa}e^{rt\cos\theta}r^{-\alpha}dr + \int^{\theta}_{-\theta}e^{\kappa t \cos\psi}\kappa^{1-\alpha}d\psi \right)
\leq c t^{\alpha-1}.
\end{split}
\end{equation*}
Moreover, we get
\begin{equation*}
\left\| \int^{t}_{t_{n}}(t-s)^{k}\mathscr{E}(s)ds \right\| \leq c \int^{t_{n}}_{t}(s-t)^{k}s^{\alpha-1}ds \leq c \tau^{k+1} t^{\alpha-1}.
\end{equation*}
According to the definition of $ \mathscr{E}_{\tau}(t)=\sum^{\infty}_{n=0}\mathscr{E}^{n}_{\tau}\delta_{t_{n}}(t)$ in \eqref{nas3.8} and \eqref{3.0002}, we deduce
\begin{equation*}
\left\| \int^{t}_{t_{n}}(t-s)^{k}\mathscr{E}_{\tau}(s)ds \right\|\leq  (t_n-t)^{k} \| \mathscr{E}^{n}_{\tau} \|  \leq c \tau^{k+1} t_{n}^{\alpha-1} \leq c \tau^{k+1} t^{\alpha-1}~~~ \forall t\in (t_{n-1},t_{n}).
\end{equation*}
By \eqref{nad3.10} and the above inequalities, it yields the inequality \eqref{3.0003}.
The proof is completed.
\end{proof}

\begin{theorem}
Let $V(t_{n})$ and $V^{n}$ be the solutions of \eqref{rfee} and \eqref{CLks}, respectively. Let $v\in L^{2}(\Omega)$, $f\in C^{k}([0,T]; L^{2}(\Omega))$ and $\int_{0}^{t} (t-s)^{2\alpha-1} \|\partial^{k+1}_{s}f(s) \| ds <\infty$ with $1\leq k \leq6$.  Then
\begin{equation*}
\begin{split}
   &\left\|V^{n}-V(t_{n})\right\|\\
   &\quad \leq C\tau^{k+1-\alpha}
     \left( t^{\alpha-(k+1)}_{n}\|v\| + \sum_{l=0}^{k} t^{2\alpha+l-(k+1)}_{n}\left\| \partial^{l}_{t} f(0) \right\|
 + \int_{0}^{t_{n}}(t_{n}-s)^{2\alpha-1} \|\partial^{k+1}_{s}f(s) \| ds \right).
\end{split}
\end{equation*}
\end{theorem}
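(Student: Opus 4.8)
The plan is to exploit the linearity of both the continuous problem \eqref{rfee1} and the discrete scheme \eqref{CLks}, decomposing the data into the initial value $v$, the finitely many source terms $\partial^{l}_{t}f(0)$ ($0\le l\le k-1$) that appear in the correction, and the truncation term $R_{k}(t_{n})$ of \eqref{srhsR}. By superposition $V(t_{n})-V^{n}$ splits into contributions that can be estimated one at a time, and the two pieces coming from $R_{k}(t_{n})=\frac{t_{n}^{k}}{k!}\partial^{k}_{t}f(0)+\bigl(\frac{t^{k}}{k!}\ast\partial^{k+1}_{t}f(t)\bigr)(t_{n})$ are already controlled: the polynomial part by Lemma \ref{lemma3.9} and the convolution part by Lemma \ref{lemma3.10}. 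The bound $c\tau^{k+1}t_{n}^{\alpha-1}$ of Lemma \ref{lemma3.9} is absorbed into the $l=k$ term $\tau^{k+1-\alpha}t_{n}^{2\alpha-1}$ of the claimed estimate using $\tau^{\alpha}\le t_{n}^{\alpha}$, while Lemma \ref{lemma3.10} delivers the final integral term verbatim.

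For the remaining contributions I would compare the contour representations \eqref{LT} and \eqref{DLT} term by term, throughout choosing $\kappa=t_{n}^{-1}$ and $\theta$ close to $\pi/2$ so that the power integrals convert to powers of $t_{n}$ via \eqref{ad3.3.09}. For the initial value $v$ I write the error as the sum of an integral over $\Gamma^{\tau}_{\theta,\kappa}$ of $e^{zt_{n}}\bigl(\bar{\mu}(e^{-z\tau})K(z_{\tau})A-K(z)A\bigr)v$ and the tail of the continuous integral over $\Gamma_{\theta,\kappa}\setminus\Gamma^{\tau}_{\theta,\kappa}$; the first is bounded by $C\tau^{k+1-\alpha}|z|^{k-\alpha}$ through Lemma \ref{Lemma3}, and on the tail one trades the decay $\|K(z)A\|\le C|z|^{-1}$ against the lower bound $|z|\ge\pi/(\tau\sin\theta)$ to extract $\tau^{k+1-\alpha}$. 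Integrating $e^{rt_{n}\cos\theta}|z|^{k-\alpha}$ then yields $C\tau^{k+1-\alpha}t_{n}^{\alpha-(k+1)}\|v\|$. The $f(0)$ term is handled identically but with the scalar kernel comparison $\|\bar{\mu}(e^{-z\tau})K(z_{\tau})-K(z)\|\le C\tau^{k+1-\alpha}|z|^{k-2\alpha}$ of Lemma \ref{Lemma3}, producing the $l=0$ summand $C\tau^{k+1-\alpha}t_{n}^{2\alpha-(k+1)}\|f(0)\|$.

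For each $\partial^{l}_{t}f(0)$ with $1\le l\le k-1$, whose continuous symbol is $(z^{\alpha}-A)^{-1}z^{-(l+1)}$ and discrete symbol is $(z_{\tau}^{\alpha}-A)^{-1}\bigl(\frac{\gamma_{l}(e^{-z\tau})}{l!}+\sum_{j}d^{(k)}_{l,j}e^{-zj\tau}\bigr)\tau^{l+1}$, I would split the difference into a coefficient error and a resolvent error. Lemma \ref{Lemma4} bounds the coefficient part by $C\tau^{k+1}|z|^{k-l}$, which after multiplying by $\|(z_{\tau}^{\alpha}-A)^{-1}\|\le C|z|^{-\alpha}$ and integrating gives $\tau^{k+1}t_{n}^{\alpha+l-(k+1)}$, again dominated via $\tau^{\alpha}\le t_{n}^{\alpha}$; the resolvent part uses the identity $(z_{\tau}^{\alpha}-A)^{-1}-(z^{\alpha}-A)^{-1}=(z_{\tau}^{\alpha}-A)^{-1}(z^{\alpha}-z_{\tau}^{\alpha})(z^{\alpha}-A)^{-1}$ together with $|z_{\tau}^{\alpha}-z^{\alpha}|\le C\tau^{k+1-\alpha}|z|^{k+1}$ from Lemma \ref{Lemma1}, giving integrand $C\tau^{k+1-\alpha}|z|^{k-l-2\alpha}$ and hence the exact summand $\tau^{k+1-\alpha}t_{n}^{2\alpha+l-(k+1)}\|\partial^{l}_{t}f(0)\|$. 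Collecting all pieces by the triangle inequality produces the stated estimate.

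Throughout, the discrete resolvent bound $\|(z_{\tau}^{\alpha}-A)^{-1}\|\le C|z|^{-\alpha}$ rests on the sectoriality $z_{\tau}^{\alpha}\in\Sigma_{\theta_{0}}$ established in Lemma \ref{Lemma5}, combined with \eqref{fractional resolvent estimate} and the norm equivalence $c_{1}|z|\le|z_{\tau}|\le c_{2}|z|$ of Lemma \ref{Lemma1}; this is the hinge on which every discrete estimate turns. I expect the main obstacle to be not any single estimate but the uniform bookkeeping: in each case one must verify that the two contour pieces and the two error splittings all produce powers of $\tau$ and $t_{n}$ that either match the target exponent or are absorbed into it via $\tau\le t_{n}$, and that the choice $\kappa=t_{n}^{-1}$ renders every $|z|$-power integral convergent with precisely the $t_{n}$-exponent appearing in the theorem.
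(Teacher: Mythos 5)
Your proposal is correct and follows essentially the same route as the paper's proof: the same contour-representation decomposition (the paper's $I_{1}$, $I_{2,l}$, $I_{3}$, $I_{4}$ split, which you organize by data source), the same use of Lemmas \ref{Lemma1}--\ref{Lemma5} for the $v$, $f(0)$, and $\partial^{l}_{t}f(0)$ contributions with the identical resolvent-difference identity and tail-integral trick on $\Gamma\backslash\Gamma^{\tau}$, and the same invocation of Lemmas \ref{lemma3.9} and \ref{lemma3.10} for the two pieces of $R_{k}$. Your explicit absorption steps via $\tau^{\alpha}\leq t_{n}^{\alpha}$ are details the paper leaves implicit, but they match its bookkeeping exactly.
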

\begin{proof}
Subtracting \eqref{LT} from \eqref{DLT}, we obtain
\begin{equation*}
V^{n}-V(t_{n})=I_{1}+\sum_{l=1}^{k-1}I_{2,l}+I_{3}-I_{4},
\end{equation*}
where
\begin{equation*}
\begin{split}
I_{1} =& \frac{1}{2\pi i} \int_{\Gamma^{\tau}} e^{zt_{n}} \left[ \bar{\mu}(e^{-z\tau})K(z_{\tau}) - K(z) \right] (Av+f(0)) dz, \\
I_{2,l} =& \frac{1}{2\pi i} \int_{\Gamma^{\tau}}e^{zt_{n}}\left[z_{\tau}\left(\frac{\gamma_{l}}{l!}+\sum_{j=1}^{k}d^{(k)}_{l,j}\xi^{j}\right)\tau^{l+1} K(z_{\tau})-z^{-l}K(z)\right]\partial^{l}_{t}f(0)dz, \\
\end{split}
\end{equation*}
and
\begin{equation*}
\begin{split}
I_{3} =& \frac{1}{2\pi i} \int_{\Gamma^{\tau}} e^{zt_{n}} z_{\tau}K(z_{\tau}) \tau \widetilde{R_{k}}(e^{-z\tau}) dz - \frac{1}{2\pi i} \int_{\Gamma} e^{zt_{n}} zK(z)\widehat{R_{k}}(z)  dz, \\
I_{4} =& \frac{1}{2\pi i} \int_{\Gamma\backslash\Gamma^{\tau}} e^{zt_{n}}K(z) \left[ Av+f(0) + \sum_{l=1}^{k-1}z^{-l}\partial^{l}_{t}f(0)  \right]  dz.
\end{split}
\end{equation*}
Using the triangle inequality, Lemma \ref{Lemma2}, \ref{Lemma3} and $\kappa=t_{n}^{-1}$, we estimate the first term
\begin{equation*}
\begin{split}
\left\|I_{1}\right\|
\leq & C \int_{\Gamma^{\tau}}\left|e^{zt_{n}}\right| \left( \left\|\bar{\mu}(e^{-z\tau})K(z_{\tau})A-K(z)A\right\| \|v\| + \left\|\bar{\mu}(e^{-z\tau})K(z_{\tau})-K(z)\right\| \|f(0)\| \right) |dz|\\
\leq &  C\tau^{k+1-\alpha} \int_{\Gamma^{\tau}}\left|e^{zt_{n}}\right| \left( |z|^{k-\alpha} \|v\| +  |z|^{k-2\alpha} \|f(0)\| \right)|dz|\\
 =& C\tau^{k+_1-\alpha}\left( t_{n}^{\alpha-(k+1)} \int^{\frac{t_{n}\pi}{\tau\sin\theta}}_{1} e^{rt_{n}\cos\theta}  (rt_{n})^{k-\alpha} d(rt_{n}) \|v\|
    +t_{n}^{\alpha-(k+1)}\int^{\theta}_{-\theta}e^{\cos\varphi} d\varphi\|v\| \right. \\
  & \left.+t_{n}^{2\alpha-(k+1)}\int^{\frac{t_{n}\pi}{\tau\sin\theta}}_{1} e^{rt_{n}\cos\theta} (rt_{n})^{k-2\alpha} d(rt_{n})\|f(0)\| +t_{n}^{2\alpha-(k+1)}\int^{\theta}_{-\theta}e^{\cos\varphi} d\varphi\|f(0)\|\right) \\
\leq & C\tau^{k+1-\alpha}\left( t_{n}^{\alpha-(k+1)} \|v\| +t_{n}^{2\alpha-(k+1)} \|f(0)\| \right)
\end{split}
\end{equation*}

Noting $(z^{\alpha}_{\tau}+A)^{-1} -(z^{\alpha}+A)^{-1}=(z^{\alpha}_{\tau}+A)^{-1}(z^{\alpha}+A)^{-1} ( z^{\alpha}-z^{\alpha}_{\tau}),$
 and according to   the triangle inequality, \eqref{fractional resolvent estimate} and  Lemmas  \ref{Lemma4}-\ref{Lemma5},
we estimate the second term
\begin{equation*}
\begin{split}
\|I_{2,l}\|
\leq & \frac{1}{2\pi} \int_{\Gamma^{\tau}} \left|e^{zt_{n}}\right| \left|\left(\frac{\gamma_{l}}{l!}+\sum_{j=1}^{k}d^{(k)}_{l,j}\xi^{j}\right)\tau^{l+1} - \frac{1}{z^{l+1}} \right| \left\|(z^{\alpha}_{\tau}+A)^{-1} \right\|
 \left\|\partial^{l}_{t}f(0)\right\|  |dz| \\
 & +\frac{1}{2\pi} \int_{\Gamma^{\tau}} \left|e^{zt_{n}}\right| |z|^{-l-1} \left\|(z^{\alpha}_{\tau}+A)^{-1} -(z^{\alpha}+A)^{-1}\right\| \left\|\partial^{l}_{t}f(0)\right\|  |dz| \\
\leq & C\int_{\Gamma^{\tau}} \left|e^{zt_{n}}\right| \tau^{k+1}|z|^{k-l-\alpha}\left\|\partial^{l}_{t}f(0)\right\|  |dz|
  + C\int_{\Gamma^{\tau}} \left|e^{zt_{n}}\right| \tau^{k+1-\alpha}|z|^{k-l-2\alpha} \left\|\partial^{l}_{t}f(0)\right\|  |dz| \\
\leq & C \tau^{k+1-\alpha} \int_{\Gamma^{\tau}} \left|e^{zt_{n}}\right| |z|^{k-l-2\alpha}  |dz| \left\| \partial^{l}_{t}f(0)\right\|
\leq  C \tau^{k+1-\alpha}  t_{n}^{2\alpha+l-(k+1)} \left\| \partial^{l}_{t}f(0)\right\|.
\end{split}
\end{equation*}

We next estimate $I_{4}$ as following. From the triangle inequality and the resolvent estimate \eqref{fractional resolvent estimate}, it yields
\begin{equation*}
\begin{split}
\left\|I_{4}\right\|
 \leq& C \int_{\Gamma\backslash\Gamma^{\tau}}\left|e^{zt_{n}}\right| \left\| K(z)A \right\|\left\|v\right\|dz+C\int_{\Gamma\backslash\Gamma^{\tau}} \left|e^{zt_{n}}\right|\left\|K(z)\right\|\left\|f(0)\right\| |dz| \\
 & + C \int_{\Gamma\backslash\Gamma^{\tau}} \left|e^{zt_{n}}\right| \left\| K(z) \right\|  \sum_{l=1}^{k-1}|z|^{-l} \left\|\partial^{l}_{t}f(0) \right\|  |dz|  \\
 \leq& C \int_{\Gamma\backslash\Gamma^{\tau}}\left|e^{zt_{n}}\right| |z|^{-1} \left\|v\right\|dz+C\int_{\Gamma\backslash\Gamma^{\tau}} \left|e^{zt_{n}}\right| |z|^{-1-\alpha} \left\|f(0)\right\| |dz| \\
 & + C \int_{\Gamma\backslash\Gamma^{\tau}} \left|e^{zt_{n}}\right|  \sum_{l=1}^{k-1}|z|^{-l-1-\alpha} \left\|\partial^{l}_{t}f(0) \right\|   |dz| \\
  \leq& C \tau^{k+1-\alpha}\left( t_{n}^{\alpha-(k+1)} \left\|v\right\| + t_{n}^{2\alpha-(k+1)}  \left\|f(0)\right\| |dz|
  +  \sum_{l=1}^{k-1} t_{n}^{2\alpha+l-(k+1)}  \left\|\partial^{l}_{t}f(0) \right\| \right).
\end{split}
\end{equation*}
Since
\begin{equation*}
\begin{split}
\int_{\Gamma\backslash\Gamma^{\tau}}\left|e^{zt_{n}}\right| |z|^{-1}  |dz|  \left\|v\right\|
= &\int^{\infty}_{\frac{\pi}{\tau\sin\theta}}\left|e^{zt_{n}}\right||z|^{-1}|dz|\left\|v\right\| \leq  \int^{\infty}_{\frac{\pi}{\tau\sin\theta}} e^{rt_{n}\cos\theta}  r^{-1}  dr \left\|v\right\| \\
\leq  &  \tau^{k+1-\alpha} \int^{\infty}_{\frac{\pi}{\tau\sin\theta}} e^{rt_{n}\cos\theta} r^{k-\alpha} dr \left\|v\right\| \leq  \tau^{k+1-\alpha} t_{n}^{\alpha-(k+1)} \left\|v\right\|
\end{split}
\end{equation*}
with $ 1\leq \left( \frac{\sin\theta}{\pi} \right)^{k+1-\alpha} \tau^{k+1-\alpha} r^{k+1-\alpha} \leq  \tau^{k+1-\alpha} r^{k+1-\alpha}$, $r \geq \frac{\pi}{\tau\sin\theta}$.

According to Lemmas \ref{lemma3.9} and \ref{lemma3.10} with $R_{k} = \frac{t^{k}}{k!}\partial^{k}_{t} f(0)+\frac{t^{k}}{k!} \ast \partial^{k+1}_{t} f(t)$, there exist
\begin{equation*}
\left\|I_{3}\right\|\leq c\tau^{k+1}t_{n}^{\alpha-1} \left\| \partial^{k}_{t} f(0) \right\|+ c\tau^{k+1} \int_{0}^{t_{n}}(t_{n}-s)^{2\alpha-1} \left\|\partial^{k+1}_{s}f(s) \right\| ds.
\end{equation*}
The proof is completed.
\end{proof}

\section{Numerical results}\label{Se:numer}
We  numerically verify the above theoretical results  and the discrete
$L^2$-norm is used to measure the numerical errors.
In the space direction, it is discretized with the   spectral collocation method  with the Chebyshev-Gauss-Lobatto points \cite{ACYZ:21,STW:2011}.
Here we main focus on the time direction convergence order, since the convergence rate of the spatial discretization is well understood.

\begin{example}
Let us consider the following  subdiffusion \eqref{fee}
\begin{equation*}
\begin{split}
& ^{C}_{0}D^{\alpha}_{t}u(x,t) - \frac{\partial^{2}u(x,t)}{\partial x^{2}} = f(x,t),  \\
& u(-1,t)=u(1,t)=0, \\
& u(x,0)=v(x)
\end{split}
\end{equation*}
with the nonsmooth data $v(x)=\sqrt{1-x^2}$ and $f(x,t)=(t+1)^8\left(1+\chi_{(0,1)}(x)\right)$. Here
\begin{equation*}
\chi_{(0,1)}(x)=\left\{ \begin{array}
{l@{\quad} l}1,~~~~0<x<1,\\
0,~~~~{\rm elsewhere}.
\end{array}
\right.
\end{equation*}
\end{example}
Since the analytic solutions is unknown, the order of the convergence of the numerical results are computed by the following formula
\begin{equation*}
  {\rm Convergence ~Rate}=\frac{\ln \left(||u^{N/2}-u^{N}||/||u^{N}-u^{2N}||\right)}{\ln 2}
\end{equation*}
with $u^N=V^N+v$ in  \eqref{CLks}.

\begin{table}[!ht]
\begin{center}
\caption{The  errors and convergent order of uncorrection $L_k$ approximation \eqref{SLks2}.}%%%, k=4,5,6
\begin{tabular}{c c c c c c c c}
\hline
         $k$                 &  $\alpha$ &  $N=80$      & $N=160$    &  $N=320$     &  $N=640$   &  $N=1280$   &  Rate              \\ \hline
 \multirow{3}{*}{4}          &  0.2      &  4.8406e-04  & 2.3850e-04 &  1.1712e-04  & 5.7975e-05 &  2.8839e-05 &  $\approx$ 1.0173  \\
                             &  0.5      &  1.0889e-03  & 5.6592e-04 &  2.7928e-04  & 1.3829e-04 &  6.8792e-05 &  $\approx$ 0.9961  \\
                             &  0.8      &  1.4043e-03  & 8.8681e-04 &  4.4652e-04  & 2.2157e-04 &  1.1023e-04 &  $\approx$ 0.9178  \\ \hline
\multirow{3}{*}{5}           &  0.2      &  4.9653e-04  & 2.3900e-04 &  1.1713e-04  & 5.7975e-05 &  2.8840e-05 &  $\approx$ 1.0265  \\
                             &  0.5      &  1.1835e-03  & 5.7040e-04 &  2.7948e-04  & 1.3830e-04 &  6.8792e-05 &  $\approx$ 1.0261  \\
                             &  0.8      &  1.8875e-03  & 9.1481e-04 &  4.4809e-04  & 2.2166e-04 &  1.1023e-04 &  $\approx$ 1.0245  \\ \hline
\multirow{3}{*}{6}           &  0.2      &  4.9694e-04  & 2.3901e-04 &  1.1714e-04  & 5.7975e-05 &  2.8840e-05 &  $\approx$ 1.0268  \\
                             &  0.5      &  1.2767e-03  & 5.7052e-04 &  2.7949e-04  & 1.3830e-04 &  6.8792e-05 &  $\approx$ 1.0535  \\
                             &  0.8      &  1.6587e-03  & 9.1580e-04 &  4.4811e-04  & 2.2166e-04 &  1.1023e-04 &  $\approx$ 0.9778  \\ \hline
\end{tabular}
\label{Ta:tableLk1}
\end{center}
\end{table}
\begin{table}[!ht]
\begin{center}
\caption{The  errors and convergent order of correction $L_k$ approximation  \eqref{CLks}.}
\begin{tabular}{c c c c c c c c}
\hline
        $k$                  &  $\alpha$ &  $N=20$    &  $N=40$     &  $N=80$    &  $N=160$     &  $N=320$   &  Rate                    \\ \hline
 \multirow{3}{*}{4}          &  0.2      & 7.9510e-03 & 3.1598e-04   & 1.2879e-05  & 5.0269e-07   & 1.9150e-08  &  $\approx$ 4.6658(4.8)    \\
                             &  0.5      & 3.4422e-02 & 2.0224e-03   & 9.7963e-05  & 4.5534e-06   & 2.0704e-07  &  $\approx$ 4.3358(4.5)     \\
                             &  0.8      & 1.3454e-01 & 8.5003e-03   & 5.0322e-04  & 2.8569e-05   & 1.5882e-06  &  $\approx$ 4.0926(4.2)      \\ \hline
 \multirow{3}{*}{5}          &  0.2      & 6.0454e-03 & 9.2733e-06   & 4.1798e-07  & 8.0735e-09   & 1.5835e-10  &  $\approx$ 6.2965(5.8)       \\
                             &  0.5      & 1.2783e-02 & 1.8653e-04   & 3.2579e-06  & 7.9314e-08   & 1.8079e-09  &  $\approx$ 5.6884(5.5)        \\
                             &  0.8      & 1.9116e-02 & 2.8855e-04   & 2.0050e-05  & 5.5691e-07   & 1.5410e-08  &  $\approx$ 5.0606(5.2)         \\ \hline
 \multirow{3}{*}{6}          &  0.2      & 1.3275e-01 & 1.1518e-03   & 2.4273e-08  & 7.6887e-11   & 1.4921e-11  &  $\approx$ 8.2626(6.8)          \\
                             &  0.5      & 3.4652e-01 & 2.7804e-02   & 4.2374e-04  & 2.9762e-07   & 1.2242e-10  &  $\approx$ 7.8494(6.5)           \\
                             &  0.8      & 2.5905e-01 & 2.8118e-02   & 1.4048e-03  & 2.8657e-06   & 2.6575e-10  &  $\approx$ 7.4650(6.2)            \\ \hline
\end{tabular}
\label{Ta:tableL4113}
\end{center}
\end{table}

Table \ref{Ta:tableLk1} shows that the stand   $L_k$ approximation  in \eqref{SLks2} just  achieves the  first-order convergence. However,
the correction $L_k$  in \eqref{CLks} preserves the high-order convergence rate with  nonsmooth data in Table \ref{Ta:tableL4113}.

\section*{Appendix}\label{App:coeff}

The coefficients $\omega^{(k)}_{j}$ of $L_k$ approximation  in \eqref{add2.5} are given explicitly by the following
{\tiny{
\begin{itemize}
\item $L_1$ approximation
 \begin{equation*}\label{rcoeff1}
\begin{split}
\omega^{(1)}_{0}=\frac{1}{\Gamma(2-\alpha)}, \quad
\omega^{(1)}_{j}=\frac{(j+1)^{1-\alpha}-2j^{1-\alpha}+(j-1)^{1-\alpha}}{\Gamma(2-\alpha)},~~j\geq 1.
\end{split}
\end{equation*}
\end{itemize}

\begin{itemize}
\item $L_2$ approximation
\begin{equation*}
\begin{split}
\omega^{(2)}_{0}=\frac{1}{\Gamma(3-\alpha)} +\frac{1}{2} \frac{1}{\Gamma(2-\alpha)}, \quad
\omega^{(2)}_{1}= \frac{\left(2^{2-\alpha}-3\right)}{\Gamma(3-\alpha)}  +\frac{1}{2}\frac{\left(2^{1-\alpha}-3\right)}{\Gamma(2-\alpha)},
\end{split}
\end{equation*}
\begin{equation*}
\begin{split}
\omega^{(2)}_{j}
= & \frac{\left((j+1)^{2-\alpha}-3j^{2-\alpha}+3(j-1)^{2-\alpha}-(j-2)^{2-\alpha}\right)}{\Gamma(3-\alpha)}  +\frac{1}{2}\frac{\left((j+1)^{1-\alpha}-3j^{1-\alpha}+3(j-1)^{1-\alpha}-(j-2)^{1-\alpha}\right)}{\Gamma(2-\alpha)},~~j\geq 2.
\end{split}
\end{equation*}
\end{itemize}

\begin{itemize}
\item $L_3$ approximation
\begin{equation*}
\begin{split}
\omega^{(3)}_{0}=\frac{1}{\Gamma(4-\alpha)} + \frac{1}{\Gamma(3-\alpha)}+\frac{1}{3}\frac{1}{\Gamma(2-\alpha)}, \quad
\omega^{(3)}_{1}= \frac{ 2^{3-\alpha}-4 }{\Gamma(4-\alpha)}  + \frac{ 2^{2-\alpha}-4 }{\Gamma(3-\alpha)}+\frac{1}{3}\frac{ 2^{1-\alpha}-4 }{\Gamma(2-\alpha)},
\end{split}
\end{equation*}
\begin{equation*}
\begin{split}
\omega^{(3)}_{2}
= & \frac{ 3^{3-\alpha}-4\times 2^{3-\alpha}+6 }{\Gamma(4-\alpha)}  + \frac{ 3^{2-\alpha}-4\times 2^{2-\alpha}+6 }{\Gamma(3-\alpha)}
   +\frac{1}{3}\frac{ 3^{1-\alpha}-4\times 2^{1-\alpha}+6 }{\Gamma(2-\alpha)},
\end{split}
\end{equation*}
\begin{equation*}
\begin{split}
\omega^{(3)}_{j}=
& \frac{ (j+1)^{3-\alpha}-4j^{4-\alpha}+6(j-1)^{3-\alpha}-4(j-2)^{3-\alpha}
+(j-3)^{3-\alpha} }{\Gamma(4-\alpha)}
  + \frac{ (j+1)^{2-\alpha}-4j^{2-\alpha}+6(j-1)^{2-\alpha}-4(j-2)^{2-\alpha}
+(j-3)^{2-\alpha} }{\Gamma(3-\alpha)}       \\
 & +\frac{1}{3}\frac{ (j+1)^{1-\alpha}-4j^{1-\alpha}+6(j-1)^{1-\alpha}-4(j-2)^{1-\alpha}
+(j-3)^{1-\alpha} }{\Gamma(2-\alpha)},~j\geq 3.
\end{split}
\end{equation*}
\end{itemize}

\begin{itemize}
\item $L_4$ approximation
\begin{equation*}
\begin{split}
\omega^{(4)}_{0}=\frac{1}{\Gamma(5-\alpha)} +\frac{3}{2} \frac{1}{\Gamma(4-\alpha)}+\frac{11}{12}\frac{1}{\Gamma(3-\alpha)} +\frac{1}{4}\frac{1}{\Gamma(2-\alpha)}, \quad
\omega^{(4)}_{1}=
  \frac{ 2^{4-\alpha}-5 }{\Gamma(5-\alpha)}  +\frac{3}{2}\frac{ 2^{3-\alpha}-5 }{\Gamma(4-\alpha)}
  +\frac{11}{12}\frac{ 2^{2-\alpha}-5 }{\Gamma(3-\alpha)}+\frac{1}{4}\frac{ 2^{1-\alpha}-5 }{\Gamma(2-\alpha)},
  \end{split}
\end{equation*}
\begin{equation*}
\begin{split}
\omega^{(4)}_{2}
=  \frac{ 3^{4-\alpha}-5\times 2^{4-\alpha}+10 }{\Gamma(5-\alpha)}  +\frac{3}{2}\frac{ 3^{3-\alpha}-5\times 2^{3-\alpha}+10 }{\Gamma(4-\alpha)}
   +\frac{11}{12}\frac{ 3^{2-\alpha}-5\times 2^{2-\alpha}+10 }{\Gamma(3-\alpha)} +\frac{1}{4}\frac{ 3^{1-\alpha}-5\times 2^{1-\alpha}+10 }{\Gamma(2-\alpha)},
\end{split}
\end{equation*}
\begin{equation*}
\begin{split}
\omega^{(4)}_{3}
= & \frac{ 4^{4-\alpha}-5\times 3^{4-\alpha}+10\times2^{4-\alpha} -10  }{\Gamma(5-\alpha)}   + \frac{3}{2}\frac{ 4^{3-\alpha}-5\times 3^{3-\alpha}+10\times2^{3-\alpha} -10 }{\Gamma(4-\alpha)}   \\
  &+\frac{11}{12}\frac{ 4^{2-\alpha}-5\times 3^{2-\alpha}+10\times2^{2-\alpha}-10 }{\Gamma(3-\alpha)} +\frac{1}{4}\frac{ 4^{1-\alpha}-5\times 3^{1-\alpha}+10\times2^{1-\alpha}-10  }{\Gamma(2-\alpha)},
\end{split}
\end{equation*}
\begin{equation*}
\begin{split}
\omega^{(4)}_{j}=& \frac{  (j+1)^{4-\alpha}-5j^{4-\alpha}+10(j-1)^{4-\alpha} -10(j-2)^{4-\alpha}  +5(j-3)^{4-\alpha}-(j-4)^{4-\alpha}    }{\Gamma(5-\alpha)}              \\
& +\frac{3}{2}\frac{ (j+1)^{3-\alpha}-5j^{3-\alpha}+10(j-1)^{3-\alpha}-10(j-2)^{3-\alpha}  +5(j-3)^{3-\alpha}-(j-4)^{3-\alpha} }{\Gamma(4-\alpha)}       \\
&+\frac{11}{12}\frac{ (j+1)^{2-\alpha}-5j^{2-\alpha}+10(j-1)^{2-\alpha}  -10(j-2)^{2-\alpha}  +5(j-3)^{2-\alpha}-(j-4)^{2-\alpha} }{\Gamma(3-\alpha)}   \\
& +\frac{1}{4}\frac{ (j+1)^{1-\alpha}-5j^{1-\alpha}+10(j-1)^{1-\alpha} -10(j-2)^{1-\alpha}  +5(j-3)^{1-\alpha}-(j-4)^{1-\alpha} }{\Gamma(2-\alpha)},~j\geq 4.
\end{split}
\end{equation*}
\end{itemize}

\begin{itemize}
\item $L_5$ approximation
\begin{equation*}
\begin{split}
\omega^{(5)}_{0}=
&\frac{1}{\Gamma(6-\alpha)} + 2 \frac{1}{\Gamma(5-\alpha)} + \frac{7}{4}\frac{1}{\Gamma(4-\alpha)} + \frac{5}{6}\frac{1}{\Gamma(3-\alpha)} + \frac{1}{5}\frac{1}{\Gamma(2-\alpha)},
\end{split}
\end{equation*}
\begin{equation*}
\begin{split}
\omega^{(5)}_{1}=
& \frac{ 2^{5-\alpha}-6  }{\Gamma(6-\alpha)} + 2\frac{ 2^{4-\alpha}-6 }{\Gamma(5-\alpha)} + \frac{7}{4}\frac{ 2^{3-\alpha}-6 }{\Gamma(4-\alpha)}
  +\frac{5}{6}\frac{ 2^{2-\alpha}-6 }{\Gamma(3-\alpha)} + \frac{1}{5}\frac{ 2^{1-\alpha}-6 }{\Gamma(2-\alpha)},
\end{split}
\end{equation*}
\begin{equation*}
\begin{split}
\omega^{(5)}_{2}=
& \frac{ 3^{5-\alpha}-6\times2^{5-\alpha}+15 }{\Gamma(6-\alpha)} + 2\frac{ 3^{4-\alpha}-6\times2^{4-\alpha}+15 }{\Gamma(5-\alpha)}
  +\frac{7}{4}\frac{ 3^{3-\alpha}-6\times2^{3-\alpha}+15 }{\Gamma(4-\alpha)}
 +\frac{5}{6}\frac{ 3^{2-\alpha}-6\times 2^{2-\alpha}+15 }{\Gamma(3-\alpha)} + \frac{1}{5}\frac{ 3^{1-\alpha}-6\times 2^{1-\alpha}+15 }{\Gamma(2-\alpha)},
\end{split}
\end{equation*}
\begin{equation*}
\begin{split}
\omega^{(5)}_{3}=
& \frac{ 4^{5-\alpha}-6\times 3^{5-\alpha}+15\times2^{5-\alpha} -20  }{\Gamma(6-\alpha)} +2\frac{ 4^{4-\alpha}-6\times 3^{4-\alpha}+15\times2^{4-\alpha} -20 }{\Gamma(5-\alpha)} +\frac{7}{4}\frac{ 4^{3-\alpha}-6\times 3^{3-\alpha}+15\times2^{3-\alpha}-20 }{\Gamma(4-\alpha)}  \\
& +\frac{5}{6}\frac{ 4^{2-\alpha}-6\times 3^{2-\alpha}+15\times2^{2-\alpha}-20 }{\Gamma(3-\alpha)}
 +\frac{1}{5}\frac{ 4^{1-\alpha}-6\times 3^{1-\alpha}+15\times2^{1-\alpha} -20 }{\Gamma(2-\alpha)},
\end{split}
\end{equation*}
\begin{equation*}
\begin{split}
\omega^{(5)}_{4}=
& \frac{5^{5-\alpha}-6\times 4^{5-\alpha}+15\times3^{5-\alpha} -20 \times2^{5-\alpha} +15  }{\Gamma(6-\alpha)}
 +2\frac{5^{4-\alpha}-6\times 4^{4-\alpha}+15\times3^{4-\alpha} -20 \times2^{4-\alpha} +15}{\Gamma(5-\alpha)}                \\
& +\frac{7}{4}\frac{5^{3-\alpha}-6\times 4^{3-\alpha}+15\times3^{3-\alpha} -20 \times2^{3-\alpha} +15 }{\Gamma(4-\alpha)}
 +\frac{5}{6}\frac{5^{2-\alpha}-6\times 4^{2-\alpha}+15\times3^{2-\alpha} -20 \times2^{2-\alpha} +15}{\Gamma(3-\alpha)}      \\
& +\frac{1}{5}\frac{(5^{1-\alpha}-6\times 4^{1-\alpha} +15\times3^{1-\alpha} -20 \times2^{1-\alpha} +15 }{\Gamma(2-\alpha)},
\end{split}
\end{equation*}
\begin{equation*}
\begin{split}
\omega^{(5)}_{j}
=& \frac{(j+1)^{5-\alpha}-6j^{5-\alpha}+15(j-1)^{5-\alpha} -20(j-2)^{5-\alpha}  +15(j-3)^{5-\alpha}-6(j-4)^{5-\alpha} +(j-5)^{5-\alpha}}{\Gamma(6-\alpha)} \\
&+2\frac{(j+1)^{4-\alpha}-6j^{4-\alpha}+15(j-1)^{4-\alpha} -20(j-2)^{4-\alpha} +15(j-3)^{4-\alpha}-6(j-4)^{4-\alpha} +(j-5)^{4-\alpha}}{\Gamma(5-\alpha)}   \\
&+\frac{7}{4}\frac{(j+1)^{3-\alpha}-6j^{3-\alpha}+15(j-1)^{3-\alpha}-20(j-2)^{3-\alpha}+15(j-3)^{3-\alpha}-6(j-4)^{3-\alpha}+(j-5)^{3-\alpha}}{\Gamma(4-\alpha)}\\
&+\frac{5}{6}\frac{(j+1)^{2-\alpha}-6j^{2-\alpha}+15(j-1)^{2-\alpha}-20(j-2)^{2-\alpha}+15(j-3)^{2-\alpha}-6(j-4)^{2-\alpha} +(j-5)^{2-\alpha}  }{\Gamma(3-\alpha)}   \\
&+\frac{1}{5}\frac{(j+1)^{1-\alpha}-6j^{1-\alpha}+15(j-1)^{1-\alpha} -20(j-2)^{1-\alpha}+15(j-3)^{1-\alpha}-6(j-4)^{1-\alpha} +(j-5)^{1-\alpha}  }{\Gamma(2-\alpha)},~j\geq 5.
\end{split}
\end{equation*}
\end{itemize}
\begin{itemize}
\item $L_6$ approximation
\begin{equation*}
\begin{split}
\omega^{(6)}_{0} = & \frac{1}{\Gamma(7-\! \alpha)} \!+\! \frac{5}{2}\frac{1}{\Gamma(6-\!\alpha)} \!+\! \frac{17}{6}\frac{1}{\Gamma(5-\!\alpha)} \!+\! \frac{15}{8}\frac{1}{\Gamma(4-\!\alpha)} \!+\!\frac{137}{180}\frac{1}{\Gamma(3-\!\alpha)} \!+\! \frac{1}{6}\frac{1}{\Gamma(2-\!\alpha)},
\end{split}
\end{equation*}
\begin{equation*}
\begin{split}
\omega^{(6)}_{1}\!\!=\!
& \frac{\ 2^{6-\alpha}-7 }{\Gamma(7-\!\alpha)}  \!+\! \frac{5}{2}\frac{2^{5-\alpha}-7}{\Gamma(6-\!\alpha)}  \!+\! \frac{17}{6}\frac{ 2^{4-\alpha}-7}{\Gamma(5-\!\alpha)} \!+\! \frac{15}{8}\frac{2^{3-\alpha}-7}{\Gamma(4-\!\alpha)}  \!+\! \frac{137}{180}\frac{2^{2-\alpha}-7}{\Gamma(3-\!\alpha)} \!+\! \frac{1}{6}\frac{ 2^{1-\alpha}-7}{\Gamma(2-\!\alpha)},
\end{split}
\end{equation*}
\begin{equation*}
\begin{split}
\omega^{(6)}_{2}
=& \frac{ 3^{6-\alpha}-7\times 2^{6-\alpha}+21  }{\Gamma(7-\alpha)}  +\frac{5}{2}\frac{ 3^{5-\alpha}-7\times 2^{5-\alpha}+21 }{\Gamma(6-\alpha)}
  +\frac{17}{6}\frac{ 3^{4-\alpha}-7\times 2^{4-\alpha}+21  }{\Gamma(5-\alpha)}  \\
&+\frac{15}{8}\frac{  3^{3-\alpha}-7\times 2^{3-\alpha}+21  }{\Gamma(4-\alpha)}  +\frac{137}{180}\frac{ 3^{2-\alpha}-7\times 2^{2-\alpha}+21 }{\Gamma(3-\alpha)}
  +\frac{1}{5}\frac{ 3^{1-\alpha}-7\times 2^{1-\alpha}+21  }{\Gamma(2-\alpha)},
\end{split}
\end{equation*}
\begin{equation*}
\begin{split}
\omega^{(6)}_{3}
=&\frac{  4^{6-\alpha}-7\times 3^{6-\alpha}+21\times2^{6-\alpha}-35 }{\Gamma(7-\alpha)} + \frac{5}{2}\frac{ 4^{5-\alpha}-7\times 3^{5-\alpha}+21\times2^{5-\alpha} -35 }{\Gamma(6-\alpha)} +\frac{17}{6}\frac{ 4^{4-\alpha}-7\times 3^{4-\alpha}+21\times2^{4-\alpha}-35 }{\Gamma(5-\alpha)} \\
& +\frac{15}{8}\frac{ 4^{3-\alpha}-7\times 3^{3-\alpha} +21\times2^{3-\alpha} -35 }{\Gamma(4-\alpha)} +\frac{137}{180}\frac{ 4^{2-\alpha}-7\times 3^{2-\alpha}+21\times2^{2-\alpha} -35 }{\Gamma(3-\alpha)}+\frac{1}{6}\frac{ 4^{1-\alpha}-7\times 3^{1-\alpha}
  +21\times2^{1-\alpha} -35 }{\Gamma(2-\alpha)},
\end{split}
\end{equation*}
\begin{equation*}
\begin{split}
\omega^{(6)}_{4}
=&\frac{  5^{6-\alpha}-7\times 4^{6-\alpha}+21\times3^{6-\alpha} -35 \times2^{6-\alpha} +35  }{\Gamma(7-\alpha)}
  +\frac{5}{2}\frac{ 5^{5-\alpha}-7\times 4^{5-\alpha}+21\times3^{5-\alpha} -35 \times2^{5-\alpha} +35}{\Gamma(6-\alpha)}   \\
&  +\frac{17}{6}\frac{  5^{4-\alpha}-7\times 4^{4-\alpha}+21\times3^{4-\alpha} -35 \times2^{4-\alpha} +35 }{\Gamma(5-\alpha)}
  +\frac{15}{8}\frac{  5^{3-\alpha}-7\times 4^{3-\alpha}+21\times3^{3-\alpha} -35 \times2^{3-\alpha} +35 }{\Gamma(4-\alpha)}  \\
&  +\frac{137}{180}\frac{  5^{2-\alpha}-7\times 4^{2-\alpha}+21\times3^{2-\alpha} -35 \times2^{2-\alpha} +35 }{\Gamma(3-\alpha)}
 +\frac{1}{6}\frac{  5^{1-\alpha}-7\times 4^{1-\alpha}+21\times3^{1-\alpha} -35 \times2^{1-\alpha} +35 }{\Gamma(2-\alpha)},
\end{split}
\end{equation*}

\begin{equation*}
\begin{split}
\omega^{(6)}_{5}
 =& \frac{  6^{6-\alpha}-7\times 5^{6-\alpha}+21\times4^{6-\alpha} -35 \times3^{6-\alpha} +35\times2^{6-\alpha}-21    }{\Gamma(7-\alpha)}
  +\frac{5}{2}\frac{ 6^{5-\alpha}-7\times 5^{5-\alpha}+21\times4^{5-\alpha} -35\times3^{5-\alpha} +35\times2^{5-\alpha}-21 }{\Gamma(6-\alpha)}\\
&  +\frac{17}{6}\frac{ 6^{4-\alpha}-7\times 5^{4-\alpha}+21\times4^{4-\alpha}-35\times3^{4-\alpha} +35\times2^{4-\alpha}-21 }{\Gamma(5-\alpha)}
  +\frac{15}{8}\frac{ 6^{3-\alpha}-7\times 5^{3-\alpha}+21\times4^{3-\alpha}-35\times3^{3-\alpha} +35\times2^{3-\alpha}-21 }{\Gamma(4-\alpha)} \\
&  +\frac{137}{180}\frac{ 6^{2-\alpha}-7\times5^{2-\alpha}+21\times4^{2-\alpha}-35\times3^{2-\alpha}+35\times2^{2-\alpha}-21 }{\Gamma(3-\alpha)}
  +\frac{1}{6}\frac{ 6^{1-\alpha}-7\times 5^{1-\alpha}+21\times4^{1-\alpha}-35\times3^{1-\alpha}+35\times2^{1-\alpha}-21 }{\Gamma(2-\alpha)},
\end{split}
\end{equation*}

\begin{equation*}
\begin{split}
\omega^{(6)}_{j}
 =& \frac{ (j+1)^{6-\alpha} - 7j^{6-\alpha} + 21(j-1)^{6-\alpha} - 35(j-2)^{6-\alpha} + 35(j-3)^{6-\alpha} - 21(j-4)^{6-\alpha} + 7(j-5)^{6-\alpha} - (j-6)^{6-\alpha}  }{\Gamma(7-\alpha)}               \\
&+ \frac{5}{2}\frac{ (j+1)^{5-\alpha} - 7j^{5-\alpha} + 21(j-1)^{5-\alpha} - 35(j-2)^{5-\alpha} + 35(j-3)^{5-\alpha}-21(j-4)^{5-\alpha} + 7(j-5)^{5-\alpha} - (j-6)^{5-\alpha}  }{\Gamma(6-\alpha)}      \\
&+ \frac{17}{6}\frac{ (j+1)^{4-\alpha} - 7j^{4-\alpha} + 21(j-1)^{4-\alpha} - 35(j-2)^{4-\alpha} + 35(j-3)^{4-\alpha} - 21(j-4)^{4-\alpha} + 7(j-5)^{4-\alpha} - (j-6)^{4-\alpha} }{\Gamma(5-\alpha)}    \\
&+ \frac{15}{8}\frac{ (j+1)^{3-\alpha} - 7j^{3-\alpha} + 21(j-1)^{3-\alpha} - 35(j-2)^{3-\alpha} + 35(j-3)^{3-\alpha} - 21(j-4)^{3-\alpha} + 7(j-5)^{3-\alpha} - (j-6)^{3-\alpha} }{\Gamma(4-\alpha)}    \\
&+ \frac{137}{180}\frac{ (j+1)^{2-\alpha} - 7j^{2-\alpha} + 21(j-1)^{2-\alpha} - 35(j-2)^{2-\alpha} + 35(j-3)^{2-\alpha} - 21(j-4)^{2-\alpha} + 7(j-5)^{2-\alpha} - (j-6)^{2-\alpha} }{\Gamma(3-\alpha)} \\
&+ \frac{1}{6}\frac{ (j+1)^{1-\alpha} - 7j^{1-\alpha} + 21(j-1)^{1-\alpha} - 35(j-2)^{1-\alpha} + 35(j-3)^{1-\alpha} - 21(j-4)^{1-\alpha} + 7(j-5)^{1-\alpha} - (j-6)^{1-\alpha} }{\Gamma(2-\alpha)},~j\geq 6.
\end{split}
\end{equation*}
\end{itemize}
}}

\section*{Acknowledgments}This work was supported by NSFC 11601206, 11901266 and Natural Science Foundation of Gansu Province (No. 21JR7RA253).

\section*{Data availability}
I confirm I have included a data availability statement in my main manuscript file.

\section*{Declarations}

\subsection* {Funding: This work was supported by NSFC 11601206 and Hong Kong RGC grant (No. 25300818).}

\subsection* {Conflicts of interest/Competing interests: The authors have no conflicts of interest to declare that are relevant to the content of this article.}

\subsection* {Availability of data and material: Not applicable.}

\subsection* {Code availability: Not applicable.}

\subsection* {Authors' contributions: An equal contribution.}

\end{document}